\newtheorem{theorem}{Theorem}[section]
\newtheorem{proposition}[theorem]{Proposition}
\newtheorem{lemma}[theorem]{Lemma}
\newtheorem{corollary}[theorem]{Corollary}
\newtheorem{alphatheorem}{Theorem}
\theoremstyle{definition}
\newtheorem{definition}[theorem]{Definition}
\newtheorem{example}[theorem]{Example}
\newtheorem{remark}[theorem]{Remark}
\newtheorem{conjecture}[theorem]{Conjecture}
\DeclareMathOperator*{\EEE}{\scalerel*{\mathbb{E}}{\textstyle\sum}}
\renewcommand{\Re}{\operatorname{Re}}
\newcommand{\supp}{\operatorname{supp}}
\newcommand{\bbra}[1]{ { \left[ #1 \right] } } 
\newcommand{\fpa}[1]{\left\lVert #1 \right\rVert_{\mathbb{R}/\mathbb{Z}}}
\newcommand{\bra}[1]{\left( #1 \right)}
\newcommand{\braBig}[1]{\Big( #1 \Big)}
\renewcommand{\tilde}{\widetilde}
\newcommand{\abs}[1]{\left|#1\right|}
\newcommand{\set}[2]{\left\{ #1 \ \middle| \ #2 \right\} }
\newcommand{\ceil}[1]{\left\lceil #1 \right\rceil}
\newcommand{\ffrac}[2]{#1/#2}
\newcommand{\e}{\varepsilon}
\renewcommand{\a}{\alpha}
\renewcommand{\b}{\beta}
\newcommand{\NN}{\mathbb{N}}
\newcommand{\QQ}{\mathbb{Q}}
\newcommand{\PP}{\mathbb{P}}
\newcommand{\EE}{\mathbb{E}}
\newcommand{\ZZ}{\mathbb{Z}}
\newcommand{\RR}{\mathbb{R}}
\newcommand{\CC}{\mathbb{C}}
\newcommand{\UU}{\mathbb{U}}
\newcommand{\M}[1]{\mathcal{M}_{#1}}
\newcommand{\A}[1]{\mathcal{A}_{#1}}
\newcommand{\QM}[2]{\mathcal{QM}_{#1}\ifthenelse{\equal{#2}{}}{}{^{\leq #2}}}
\newcommand{\QA}[2]{\mathcal{QA}_{#1}\ifthenelse{\equal{#2}{}}{}{^{\leq #2}}}
\newcommand{\SM}[2]{\mathcal{SM}_{#1}\ifthenelse{\equal{#2}{}}{}{^{\leq #2}}}
\newcommand{\SA}[2]{\mathcal{SA}_{#1}\ifthenelse{\equal{#2}{}}{}{^{\leq #2}}}
\newcommand{\MD}[1]{{\mathcal{M}}_{#1}(\UU)}
\newcommand{\QMD}[2]{{\mathcal{QM}}_{#1}\ifthenelse{\equal{#2}{}}{}{^{\leq #2}}(\UU)}
\newcommand{\SMD}[2]{{\mathcal{SM}}_{#1}\ifthenelse{\equal{#2}{}}{}{^{\leq #2}}(\UU)}
\newcommand{\freq}{\operatorname{freq}}\newcommand{\bb}{\mathbf}	
\newcommand{\la}[2]{{E}_{#1}(#2)}
\newcommand{\lb}[2]{{E}_{#1}^*(#2)}
\renewcommand{\subset}{\subseteq}
\begin{document}

\author[J.\ Konieczny]{Jakub Konieczny}
\address[J.\ Konieczny]{Einstein Institute of Mathematics Edmond J. Safra Campus, The Hebrew University of Jerusalem Givat Ram. Jerusalem, 9190401, Israel}
\address{Faculty of Mathematics and Computer Science, Jagiellonian University in Krak\'{o}w, \L{}ojasiewicza 6, 30-348 Krak\'{o}w, Poland}
\email{jakub.konieczny@gmail.com}

\title[M\"obius orthogonality for $q$-semimultiplicative sequences]{M\"obius orthogonality for\\ $q$-semimultiplicative sequences}

\begin{abstract}
We show that all $q$-semimultiplicative sequences are asymptotically orthogonal to the M\"obius function, thus proving the Sarnak conjecture for this class of sequences. This generalises analogous results for the sum-of-digits function and other digital sequences which follow from previous work of Mauduit and Rivat.
\end{abstract}

\maketitle 

\makeatletter{}\section{Introduction}\label{sec:Intro}

The M\"{o}bius pseudorandomness principle asserts, roughly speaking, that the sign of the M\"{o}bius function $\mu$ should behave like a random sequence of $\pm 1$'s.  (Recall that $\mu(n) = (-1)^k$ if $n$ is a product of $k$ distinct primes, and $\mu(n) = 0$ if $n$ is divisible by a square.) In particular, one expects that that sums involving the M\"{o}bius functions should exhibit a considerable amount of cancellation.

Many important statements in number theory can be construed as special cases of the aforementioned principle. For instance, the prime number theorem is well-known (see e.g.\ \cite{Tenenbaum-book}) to be equivalent to 
\begin{equation}\label{71:00}
	\EEE_{n < N} \mu(n) \to 0 \text{ as } N \to \infty,
\end{equation}
while Riemann Hypothesis is equivalent to the more precise bound
\begin{equation}\label{71:01}
	\EEE_{n < N} \mu(n) = O_{\e}(N^{-1/2+\e})  \text{ for any fixed } \e > 0.
\end{equation}
(Above, $\EE_{n<N}$ is shorthand for $\frac{1}{N} \sum_{n=0}^{N-1}$, and $O_{\e}(X)$ denotes a quantity bounded in absolute value by $cX$ where $c = c(\e)$ is a constant dependent only on $\e$.)

\newcommand{\tempL}{d}
A way to formalise the M\"{o}bius pseudorandomness principle was proposed by Sarnak in \cite{Sarnak-2009}. Let us say that a bounded sequence $f \colon \NN_0 \to \CC$ is \emph{deterministic} if for any fixed $\e > 0$ all the factors of $f$ of length $\tempL$, i.e., all the points $\bra{f(n+j)}_{j=0}^{\tempL-1} \in \CC^{\tempL}$, can be covered by $\exp(o_{\e}(\tempL))$ balls of radius $\e$ as $\tempL \to \infty$. Equivalently, $f$ is deterministic if there exists a topological dynamical system $(X,T)$ with entropy $0$ as well as a point $x \in X$ and a function $F \in C(X)$ such that $f(n) = F(T^n x)$ for all $n \geq 0$.

\begin{conjecture}[\cite{Sarnak-2009, Sarnak-2012}]
Let $f \colon \NN_0 \to \CC$ be a deterministic function. Then
	\begin{equation}\label{eq:Sarnak-dyn}
	\EEE_{n<N} f(n) \mu(n) \to 0 \text{ as } N \to \infty.
	\end{equation}	 
\end{conjecture}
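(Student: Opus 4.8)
The statement as worded is the full Sarnak conjecture for arbitrary deterministic $f$, which is open; I do not expect to settle it, so what follows is the strategy one actually pursues together with the precise point at which it stalls (and which motivates the paper's restriction to a structured subclass). The plan is to discard the entropy/dynamical formulation in favour of an arithmetic correlation criterion. By the criterion of K\'atai, later sharpened by Bourgain, Sarnak and Ziegler, orthogonality to $\mu$ follows once one shows that $f$ decorrelates along distinct prime dilations: if
\[
\limsup_{N\to\infty}\ \abs{\EEE_{n<N} f(pn)\,\overline{f(qn)}}
\]
is small on average over pairs of distinct primes $p\neq q$, then $\EEE_{n<N} f(n)\mu(n)\to 0$. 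This is the decisive reduction, because it replaces the intractable interaction between $f$ and the arithmetically rigid $\mu$ by a purely internal, dynamical question about the orbit $\bra{T^n x}_{n\geq 0}$: whether $f$ is uncorrelated with its own prime rescalings.

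The second step would be to convert the hypothesis $h(T)=0$ into control of these self-correlations. Writing $f(n)=F(T^n x)$ with $(X,T)$ of zero entropy, one passes to the orbit closure and an invariant measure and attempts a Furstenberg--Zimmer decomposition into a distal (structured) component and a relatively weakly mixing component. On the structured component one expects an affine or nilpotent model, where orthogonality is supplied by the Green--Tao theorem on M\"obius orthogonality to nilsequences, and where the dilation correlations $f(pn)\overline{f(qn)}$ are governed by equidistribution of polynomial orbits and decay unless $f$ is essentially periodic — a case that is classical. This is exactly the mechanism that succeeds in every known instance: Bourgain--Sarnak--Ziegler's treatment of horocycle flows, and, in the digital setting relevant here, the method of Mauduit and Rivat for the sum-of-digits and related sequences.

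The hard part — indeed the reason the statement is still a conjecture — is the weakly mixing, genuinely low-complexity part of the system, where zero entropy yields no quantitative handle on the bilinear averages $\EEE_{n<N} f(pn)\overline{f(qn)}$. There is no structure theorem decomposing an arbitrary zero-entropy system into pieces each individually known to be M\"obius orthogonal: subexponential growth of the factor count constrains complexity but does not by itself force the prime-dilation decorrelation that the K\'atai--Bourgain--Sarnak--Ziegler criterion demands. A complete proof would require genuinely new input — either a finer structure theory isolating the obstruction to decorrelation in zero-entropy systems, or a direct analytic estimate controlling $\mu$ against arbitrary deterministic weights. Absent either, one can only hope to verify the criterion under extra hypotheses on $f$ that manufacture the missing control; the contribution of the present work is precisely such a conditional result, exploiting the self-similar digital structure of $q$-semimultiplicative sequences to carry out this final step.
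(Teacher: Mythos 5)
You are correct that this statement is the full Sarnak conjecture, which the paper states as a conjecture and does not prove; no proof exists to compare against, and your refusal to manufacture one is the right call. Your sketch of the viable strategy matches the skeleton of what the paper actually does for its subclass: reduce via the Katai--Bourgain--Sarnak--Ziegler criterion (Theorem 2.1) and then prove a dichotomy (Theorem B) in which the obstruction to the criterion is shown to be a structured, essentially periodic sequence. Two small corrections to your account: the paper's structured alternative is the finitary notion of almost periodicity of Definition 2.2, disposed of by orthogonality of periodic sequences to $\mu$ (Lemma 2.3), not a measure-theoretic Furstenberg--Zimmer decomposition or an appeal to Green--Tao on nilsequences --- for $q$-semimultiplicative sequences everything is carried out combinatorially on digit expansions, with no passage to an invariant measure; and the criterion as stated in the paper requires decorrelation for every pair of sufficiently large distinct primes rather than on average over pairs (the paper notes that weaker averaged hypotheses suffice, citing Bourgain--Sarnak--Ziegler). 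Also, writing $f(pn)\overline{f(qn)}$ with $q$ a prime collides with the paper's convention that $q$ is the fixed base; use $p'$ as the paper does. Your diagnosis of where the general case stalls --- zero entropy alone gives no quantitative handle on the bilinear averages, and no structure theorem decomposes an arbitrary zero-entropy system into M\"obius-orthogonal pieces --- is accurate and is precisely why the paper restricts to sequences whose digital self-similarity (Proposition 6.1, the local-to-global control of the correlation quantities $E_f^*(I)$) supplies the missing input.
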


Various special cases of Sarnak conjecture have been proved; in fact, several were already known at the time it was formulated. For the trivial, one-point, dynamical system, Sarnak conjecture is equivalent to the prime number theorem, and for (still rather trivial) finite dynamical systems, it is equivalent to the prime number theorem in arithmetic progressions. The conjecture is known to hold for circle rotations \cite{Davenport-1937} and more generally nilrotations \cite{GreenTao-2012} (both with effective bounds); for horocycle flows on surfaces of constant negative curvature \cite{BourgainSarnakZiegler-2013} and more generally unipotent translations on homogeneous spaces of connected Lie groups \cite{Peckner-2015}; skew-products of tori \cite{LiuSarnak-2015}; some rank-one systems \cite{Bourgain-2013b, ElAbdalaouiLemanczykDeLaRue-2014}; generalised Morse-Kakutani flows \cite{Veech-2017}; certain interval exchanges \cite{FerencziMauduit-2018}; certain continuous extensions of rotations \cite{KulagaPrzymusLemanczyk-2015}; uniquely ergodic models of quasi-discrete spectrum automorphisms \cite{ElAbdalaouiLemanczykDeLaRue-2015}; and dynamical systems generated by automatic sequences and \cite{Mullner-2017}.
See also \cite{SarnakUpis-2015, DrmotaMullnerSpiegelhofer-2017, Fan-2017, Karagulyan-2017, FerencziKulagaPrzymusLemanczyk-2016, ElAbdalaouiKulagaPrzymusLemanczykDeLaRue-2017}.
For extensive introduction to this subject, we refer to the survey paper \cite{FerencziKulagaPrzymusLemanczyk-2017}.

\mbox{}  

In this paper, we are concerned with several classes of sequences defined in terms of digital expansion. Our purpose is to prove Sarnak conjecture for a new class of such sequences and to provide a streamlined treatment for some previously known results. Throughout, $q \geq 2$ denotes the base in which we are working.

A sequence $f \colon \NN_0 \to \UU = \set{z \in \CC}{\abs{z} = 1}$ is \emph{$q$-multiplicative} if for any $n,m \geq 0$ such that $q^l \mid n$ and $q^l > m$ for some $l \geq 0$ it holds that $f(n+m) = f(n) f(m)$. Two basic examples of such sequences are $n \mapsto e(n \alpha)$ and $n \mapsto e(s_q(n)\alpha)$ where $\alpha \in \RR$, $e(t) = e^{2\pi i t}$ and $s_q(n)$ denotes the sum of digits of $n$ in base $q$. 
A sequence $f \colon \NN_0 \to \CC$ is \emph{$q$-automatic} if there exists a finite automaton which produces $f(n)$ given the base-$q$ expansion of $f$ on input. Classical examples of $2$-automatic sequences include the Thue--Morse sequence $t(n) = (-1)^{s_2(n)}$ and the Rudin--Shapiro sequence $r(n) = (-1)^{\freq_2^{11}(n)}$, where $\freq_2^{11}(n)$ denotes the number of times the string $11$ appears in the binary expansion of $n$. 

If follows work of Indlekofer and Katai \cite{IndlekoferKatai-2002} that $q$-multiplicative sequences satisfy Sarnak conjecture (see Section \ref{sec:Outline} for details), and as mentioned before Sarnak conjecture for automatic sequences was proved by M\"{u}llner \cite{MullnerSpiegelhofer-2017}. Green \cite{Green-2012} showed that any sequence computable by a bounded-depth circuit is orthogonal to the M\"{o}bius function, see also \cite{Bourgain-2013}. A similar result for a class of generalised Morse sequences is obtained in \cite{ElAbdalaouiKasjanLemanczyk-2016} (see also \cite{DownarowiczKasjan-2015}). (For relevant definitions, see the introductions of the cited papers).

Mauduit and Rivat \cite{MauduitRivat-2010} show that the sequences  $n \mapsto e(s_q(n)\alpha)$ satisfy a variant of the prime number theorem, which is considerably stronger than M\"{o}bius orthogonality. Namely, for each $\alpha \in \RR$ such that $(q-1)\alpha \in \RR \setminus \ZZ$ there exists $\sigma > 0$ such that
\begin{equation}\label{eq:71:06}
	\EEE_{n < N} e(s_q(n) \alpha) \Lambda(n) = O(N^{-\sigma}),
\end{equation}
where $\Lambda$ denotes the von Mangoldt function given by $\Lambda(n) = \log p$ if $n$ is a power of a prime $p$, and $\Lambda(n) = 0$ otherwise. (For earlier related results, see\cite{DartygeTenenbaum-2005}). In \cite{MauduitRivat-2015}, Mauduit and Rivat prove a variant of the prime number theorem for a wide class of sequences satisfying a carry property, including in particular the Rudin--Shapiro sequence, as well as the sequences $e(\freq_{2}^{11}(n)\alpha)$ for $\alpha \in \RR$. Related results for block-counting sequences are also obtained in \cite{Hanna-2016}.

Let us say that a sequence $f \colon \NN_0 \to \UU$ is $q$-semimultiplicative if $f(0) = 0$ and there exists $r \geq 0$ such that for any $n,m,k \geq 0$ such that $k < q^l$, $q^l \mid m$, $m < q^{l+r}$, $q^{l+r}|n$ for some $l \geq 0$ it holds that $f(n+m+k) = f(n+m)f(m+k)/f(m)$. Any $q$-multiplicative sequence is $q$-semimultiplicative, and so is $e(\freq_{2}^{11}(n)\alpha)$ for any $\alpha \in \RR$.

\begin{alphatheorem}\label{thm:A}
	Let $f \colon \NN_0 \to \UU$ be a $q$-semimultiplicative sequence. Then
	\begin{equation}\label{eq:54:01a}
		\EEE_{n < N} f(n) \mu(n) \to 0 \text{ as $N \to \infty$.}	
	\end{equation}
\end{alphatheorem}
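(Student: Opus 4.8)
The plan is to deduce Theorem~\ref{thm:A} from the orthogonality criterion of K\'atai and Bourgain--Sarnak--Ziegler: for a bounded $f$ it suffices to show that the bilinear correlations $\EE_{n<N} f(pn)\overline{f(p'n)}$ are, on average over distinct primes $p,p'\le P$, negligible as $P\to\infty$. This reduces the theorem to showing that dilation by two distinct primes decorrelates $f$, and it is in estimating these correlations that the $q$-semimultiplicative hypothesis must be used.

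First I would extract the combinatorial content of the defining identity. Writing $n=\sum_i a_i q^i$ in base $q$, the relation $f(n+m+k)=f(n+m)f(m+k)/f(m)$ says that the low digits (the block $k$) and the high digits (the block $n$) interact only through the intervening width-$r$ block $m$; equivalently, the mixed difference of $\log f$ across any width-$r$ gap vanishes. Telescoping the identity over the digit positions then yields a factorisation
\begin{equation*}
	f(n)=\prod_{j\ge 0} F_j\bra{a_j,a_{j+1},\dots,a_{j+r}},
\end{equation*}
where each local factor $F_j\colon\{0,\dots,q-1\}^{r+1}\to\UU$ depends only on a window of $r+1$ consecutive digits. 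This is the uniform substitute for the two extreme cases: for $r=0$ the windows are single digits and $f$ is $q$-multiplicative (Indlekofer--K\'atai), while for block-counting sequences such as $e(\freq_2^{11}(n)\alpha)$ the factors $F_j$ are essentially independent of $j$ and $f$ is automatic (M\"ullner). The virtue of the factorisation is that it lets a single argument handle both the quasi-periodic and the finite-state regimes simultaneously.

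The heart of the matter is the correlation estimate. Here I would combine the window factorisation with a carry-propagation lemma: for all but a sparse set of $n$, the digits of $pn$ in a window around position $j$ depend on those of $n$ only locally, so that $f(pn)\overline{f(p'n)}$ splits (up to a controlled error) into a product of local sums indexed by scales $j$. At each scale the local sum is governed either by a Weyl-type equidistribution input, when the $F_j$ encode an additive rotation as in the case $f(n)=e(n\alpha)$, or by the transition structure of a finite automaton, when the $F_j$ are eventually periodic in $j$; in either regime the distinctness of $p$ and $p'$ destroys the arithmetic resonance that could sustain a correlation, for all but a negligible proportion of prime pairs.

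The principal obstacle is the control of carries. Multiplication by $p$ and $p'$ scrambles the base-$q$ digits, and one must show that this scrambling does not conspire with the factors $F_j$ to create spurious cancellation-free correlations; making the decoupling of low- and high-order digits quantitative, and uniform across the full range of behaviours captured by the class $\mathcal{SM}$ rather than splitting into the multiplicative and automatic cases, is the technical crux. Once the correlations are shown to be $o(1)$ on average over $p,p'\le P$ as $P\to\infty$, the orthogonality criterion yields~\eqref{eq:54:01a}.
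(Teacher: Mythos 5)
Your overall strategy --- reduce to the K\'atai--Bourgain--Sarnak--Ziegler criterion and then show the bilinear correlations $\EEE_{n<N} f(pn)\bar f(p'n)$ are negligible on average over prime pairs --- cannot work as stated, and the failure point is one you half-acknowledge and then dismiss. The class of $q$-semimultiplicative sequences contains $f(n)=e(\alpha n)$ for every $\alpha$, in particular for rational $\alpha=a/b$; for such $f$ one has $f(pn)\bar f(p'n)=e((p-p')n\alpha)=1$ identically whenever $p\equiv p'\pmod b$, which happens for a proportion roughly $1/\phi(b)$ of pairs of primes $p,p'\le P$ --- a \emph{positive}, not negligible, proportion as $P\to\infty$. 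So the correlations are not $o(1)$ on average over prime pairs, and no carry-propagation lemma or equidistribution input can change that. The paper is organised precisely around this obstruction: it proves a dichotomy (Theorem~\ref{thm:B}) asserting that $f$ either satisfies the criterion or is almost periodic in the sense of Definition~\ref{def:almost-periodic}, and the almost periodic branch is handled by a separate, classical argument (orthogonality of periodic sequences to $\mu$, i.e.\ Lemma~\ref{lem:AP=>Sarnak}). Your proposal has no mechanism for this second branch, and without it the reduction to the criterion is simply false for part of the class being treated.

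Beyond this structural gap, the ``heart of the matter'' in your write-up is a restatement of the problem rather than a proof. The window factorisation $f(n)=\prod_j F_j(a_j,\dots,a_{j+r})$ is indeed available (it is Lemma~\ref{lem:multi}), but the subsequent step --- that carries localise and that ``the distinctness of $p$ and $p'$ destroys the arithmetic resonance'' either by Weyl equidistribution or by automaton transition structure --- splits back into the two special cases you set out to unify and supplies a complete argument for neither, let alone for a general member of $\SMD{q}{r}$. The paper's actual mechanism is different: it quantifies local cancellation via the quantities $\la{f}{I}$, shows that for quasimultiplicative sequences non-vanishing Ces\`aro averages force summability of these local contributions and hence $q$-almost periodicity (Proposition~\ref{prop:convergence-criterion}), shows that for semimultiplicative sequences cancellation on long intervals is controlled by cancellation on intervals of bounded length (Proposition~\ref{prop:global<local}), and proves a local rigidity statement (Proposition~\ref{lem:bilinear-sums}) saying that if $f(pn)\bar f(p'n)$ is near $1$ on integers supported on an interval of digits then $f$ is near a linear phase $e(\alpha n)$ there, with $\alpha$ rational of denominator divisible by $p-p'$ and coprime to $q$. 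None of these ingredients appears in your proposal, and the Mauduit--Rivat-style carry analysis you gesture at is not the route the paper takes.
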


\subsection*{Convention} Throughout this paper, we treat the base $q \geq 2$ as fixed. In particular, we allow all implicit constants to depend on $q$, and the term ``constant'' is synonymous with ``quantity dependent only on $q$''.

\subsection*{Notation} We introduce some slightly non-standard notation to deal with base-$q$ expansions of integers. If $\Sigma$ is a finite alphabet then $\Sigma^* = \bigcup_{l \geq 0} \Sigma^l$ denotes the set of words over $\Sigma$ (including the empty word $\epsilon$). Positive integers can be identified with words over the $\Sigma_q = \{0,1,\dots,q-1\}$ via the usual base-$q$ expansion. If $n \geq 0$ is an integer then $(n)_q \in \Sigma_q^*$ denotes the expansion of $n$ in base $q$ without leading $0$'s (hence, $(0)_q = \epsilon$). Conversely if $u \in \Sigma_q^*$ is a word over $\Sigma_q$ then $[u]_q = \sum_{l=0}^\infty u_l q^l \in \NN_0$ denotes the result of interpreting $u$ as an number written in base $q$. (By convention, if $u \in \Sigma^l_q$ then $u_j = 0$ for $j > l$.)  In particular, $\bbra{(n)_q}_q = n$ for all $n \geq 0$. The support of an integer $n \geq 0$, denoted by $\supp(n)$, is the set of those $j \in \NN_0$ such that $\bra{(n)_q}_j \neq 0$. The restriction of an integer $n \geq 0$ to an index set $I \subset \NN_0$, denoted by $n|_{I}$, is the result of replacing all digits of $n$ at positions outside $I$ with $0$'s. In particular, $[u]_q|_{I} = \sum_{l \in I} u_l q^l$ for all $u \in \Sigma_q^*$ and $\supp(n|_{I}) \subset I$ for all $n \geq 0$. 

The rest of our notation is fairly standard. 
We write $\UU = \set{ z \in \CC}{ \abs{z} = 1}$, $\NN = \{1,2,\dots\}$, $\NN_0 = \NN \cup \{0\}$, and $[N] = \{0,1,\dots,N-1\}$. For $z \in \UU$, $\arg z \in (-\pi, \pi]$ denotes the argument of $z$.
For $x \in \RR$, $\fpa{x} = \min_{n \in \ZZ}\abs{x-n}$ denotes the distance of $x$ from the nearest integer; since $\fpa{x}$ depends only on $x \bmod 1$, we use the same notation for $x \in \RR/\ZZ$. For a finite set $A$, a condition is said to hold for $\e$-almost every $x \in A$ if it is satisfied for all $x \in A'$ for some $A' \subset A$ with $\abs{A \setminus A'} \leq \e \abs{A}$. If additionally $f \colon A \to \CC$ is a complex-valued function then $\EE_{x \in A} f(x)$ is a shorthand for $\frac{1}{\abs{A}} \sum_{x \in A}f(x)$.

We use standard asymptotic notation. In particular, $Y \ll X$ or $Y = O(X)$ mean that there exists a constant $c > 0$ such that $\abs{Y} \leq c X$. If $c$ is additionally allowed to depend on $Z$ we write $Y \ll_Z X$ or $Y = O_Z(X)$. If $X \ll Y \ll X$ we write $X \sim Y$. 	

\subsection*{Acknowledgements} The author is grateful to Tanja Eisner, Aihua Fan, Dominik Kwietniak, Imre K\'{a}tai, Mariusz Lema\'{n}czyk, Christian Mauduit, Tamar Ziegler for helpful discussions. The author is supported by ERC grant ErgComNum 682150.

\makeatletter{}\section{Proof structure}\label{sec:Outline}

We will now briefly describe the outline of the argument and introduce some convenient terminology.
A key tool is the oft-applied criterion first obtained by Katai \cite{Katai-1986} and later rediscovered in a quantitative form by Bourgain, Sarnak and Ziegler \cite{BourgainSarnakZiegler-2013}.

\begin{theorem}[Katai--Bourgain--Sarnak--Ziegler criterion]\label{thm:BSZ}
	Suppose that $f \colon \NN_0 \to \CC$ is a bounded sequence such that for any pair of sufficiently large distinct primes $p,p'$ it holds that 
	\begin{equation}\label{eq:54:02a}
		\EEE_{n < N} f(pn) \bar f(p'n) \to 0 \text{ as $N \to \infty$.}	
	\end{equation}
	Then for any multiplicative sequence $\nu \colon \NN_0 \to \UU$ it holds that
	\begin{equation}\label{eq:54:03a}
		\EEE_{n < N} f(n) \nu(n) \to 0 \text{ as $N \to \infty$.}	
	\end{equation}
\end{theorem}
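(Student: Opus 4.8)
The plan is to deduce \eqref{eq:54:03a} from the single bilinear hypothesis \eqref{eq:54:02a} via the Tur\'an--Kubilius strategy underlying the Kátai and Bourgain--Sarnak--Ziegler arguments. Fix a large threshold $s$ and, for a parameter $z > s$, let $P$ be the finite set of all primes $p$ with $s < p \le z$; write $\lambda = \sum_{p \in P} 1/p$ and let $\omega_P(n) = \abs{\set{p \in P}{p \mid n}}$ count the primes of $P$ dividing $n$. The two ingredients will be: a second-moment bound showing $\omega_P(n)$ concentrates around its mean $\lambda$, so that the $\omega_P$-weighted average of $f\nu$ determines the unweighted one; and the hypothesis \eqref{eq:54:02a}, fed into a Cauchy--Schwarz expansion to bound that weighted average. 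Crucially I keep $P$ finite and fixed, send $N \to \infty$ first, and only afterwards let $z \to \infty$, since all primes of $P$ exceed the fixed threshold $s$ and are therefore ``sufficiently large''.

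For the concentration step I would expand $\sum_{n < N} \bra{\omega_P(n) - \lambda}^2$ using $\abs{\set{n<N}{p \mid n}} = N/p + O(1)$ and $\abs{\set{n<N}{pp' \mid n}} = N/(pp') + O(1)$ for distinct primes $p,p'$, which gives $\sum_{n<N} \bra{\omega_P(n) - \lambda}^2 = N\lambda + O_P(1)$ (the $\lambda^2$ terms cancel). Setting $S = \sum_{n<N}\omega_P(n) f(n)\nu(n)$ and applying Cauchy--Schwarz yields $\abs{\lambda \sum_{n<N} f(n)\nu(n) - S} \le \norm{f}_\infty \sum_{n<N}\abs{\omega_P(n)-\lambda} \le \norm{f}_\infty \sqrt{N}\,\bra{N\lambda + O_P(1)}^{1/2}$, so that up to an error of order $N/\sqrt{\lambda}$ it suffices to bound $S$.

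The core step is the bilinear reduction. I rewrite $S = \sum_{p \in P} \sum_{m < N/p} f(pm)\nu(pm)$ and invoke multiplicativity of $\nu$: for $p \nmid m$ one has $\nu(pm) = \nu(p)\nu(m)$, while the terms with $p \mid m$ number at most $N/p^2$ and contribute at most $O\bra{N \sum_{p \in P} p^{-2}}$ in total. Thus $S$ agrees, up to this error, with $T := \sum_{p \in P}\nu(p)\sum_{m<N/p} f(pm)\nu(m)$. Writing $T = \sum_m \nu(m)\sum_{p \in P,\, pm < N} \nu(p) f(pm)$ and applying Cauchy--Schwarz in $m$, using $\abs{\nu(m)} = 1$ and that $m$ ranges over at most $N/s$ values, gives $\abs{T}^2 \ll \tfrac{N}{s}\sum_{p,p' \in P}\nu(p)\overline{\nu(p')}\sum_{m} f(pm)\overline{f(p'm)}$. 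The diagonal $p = p'$ contributes $\ll \tfrac{N}{s}\,\norm{f}_\infty^2 N\lambda$, since $\sum_{m<N/p}\abs{f(pm)}^2 \le \norm{f}_\infty^2 N/p$ sums to $\norm{f}_\infty^2 N\lambda$; each off-diagonal term is $o(N)$ as $N \to \infty$ by \eqref{eq:54:02a}, and there are only finitely many pairs for fixed $P$, so the off-diagonal total is $o(N^2)$. Hence $\limsup_N \abs{T}/N \ll \sqrt{\lambda/s}$.

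Combining the three estimates and dividing by $N$ gives $\limsup_N \abs{\EEE_{n<N} f(n)\nu(n)} \ll \tfrac{1}{\sqrt{\lambda s}} + \tfrac{1}{\sqrt\lambda} + \tfrac{1}{\lambda}\sum_{p \in P} p^{-2}$. Letting $z \to \infty$ forces $\lambda \to \infty$ by Mertens' theorem while $s$ remains fixed and $\sum_{p \in P} p^{-2}$ stays bounded, so the right-hand side tends to $0$; since the left-hand side does not depend on $P$, this proves \eqref{eq:54:03a}. I expect the main obstacle to be the bilinear step: one must arrange the Cauchy--Schwarz so that precisely the correlations $\sum_m f(pm)\overline{f(p'm)}$ controlled by \eqref{eq:54:02a} appear off the diagonal, and one must respect the order of limits ($N \to \infty$ for fixed $P$, then $z \to \infty$), because no single finite choice of $P$ makes the bound small on its own.
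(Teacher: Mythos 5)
Your argument is correct: it is the standard Tur\'an--Kubilius concentration step combined with the Cauchy--Schwarz bilinear expansion, with the order of limits ($N \to \infty$ for fixed $P$, then $z \to \infty$) and the $p \mid m$ exceptional terms handled properly. Note, however, that the paper does not prove this theorem at all --- it quotes it from K\'atai and from Bourgain--Sarnak--Ziegler --- and your proof is essentially the one found in those cited sources, so there is nothing in the paper to diverge from.
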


 We cannot hope to apply this criterion to all $q$-multiplicative sequences (much less to more general classes of sequences). Indeed, the sequence $f(n) = e(n\alpha)$ is $q$-multiplicative for any $\alpha \in \RR$ and any $q \geq 2$, and $f(pn) \bar f(p' n) = e((p-p')n\alpha) = 1$ for all $n \geq 0$ provided that $(p-p') \alpha \in \ZZ$. However, periodic sequences are independently known to be orthogonal to $\mu$. If fact, it will be convenient to introduce a slightly weaker notion. (Note that the following is not equivalent to several other existing notions of almost-periodicity.) 

\begin{definition}\label{def:almost-periodic}
	Let $f \colon \NN_0 \to \CC$ be a bounded sequence. Then $f$ is \emph{almost periodic} if for any $\e > 0$ there exists $Q > 0$ such that for any $N \geq 0$ there exists a sequence $g \colon [N] \to \CC$ with period $Q$ such that $\abs{ f(n) - g(n)} \leq \e$ holds for $\e$-almost all $n \in [N]$. Additionally, $f$ is \emph{$q$-almost periodic} if $Q$ can be chosen to be a power of $q$.
\end{definition}
We stress that the choice of $g$ in the above definition is allowed to depend on $N$. For instance, the sequence $e(\log \log n)$ is almost periodic, but for any periodic sequence $g \colon \NN_0 \to \CC$ the set of $n \geq 0$ such that $\abs{e(\log \log n) - g(n)} > \frac{99}{100}$ has upper asymptotic density $\geq \frac{1}{2}$. 

Definition \ref{def:almost-periodic} is arranged so that M\"{o}bius orthogonality of all almost periodic sequences follows directly from M\"{o}bius orthogonality of all periodic sequences. One can also construe the family of almost periodic sequences, at least on the intuitive level, as the finitary analogue of the $L^2$-closure of the family of periodic sequences.

\begin{lemma}\label{lem:AP=>Sarnak}
	Suppose that $f \colon \NN_0 \to \CC$ is an almost periodic sequence. Then 
	\begin{equation}\label{eq:54:03b}
		\EEE_{n < N} f(n) \mu(n) \to 0 \text{ as $N \to \infty$.}	
	\end{equation}
\end{lemma}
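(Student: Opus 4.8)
The plan is to deduce the statement from the classical Möbius orthogonality of genuinely periodic sequences, using Definition \ref{def:almost-periodic} to transfer from the periodic to the almost periodic case. The periodic input I would invoke is that for every rational $\beta$ one has $\EEE_{n<N} e(n\beta)\mu(n) \to 0$ as $N \to \infty$; this is a standard consequence of the prime number theorem in arithmetic progressions (equivalently, of Davenport's estimate for $\mu$ against rational-frequency exponentials). Since every sequence $g$ of period $Q$ admits a finite Fourier expansion $g(n)=\sum_{a=0}^{Q-1}c_a\, e(an/Q)$, linearity then yields $\EEE_{n<N} g(n)\mu(n)\to 0$ for every periodic $g$.

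Now fix $\e>0$ and let $M=\sup_n \abs{f(n)}<\infty$. Applying Definition \ref{def:almost-periodic} produces a period $Q=Q(\e)$ such that for each $N$ there is a $Q$-periodic sequence $g=g_N$ with $\abs{f(n)-g(n)}\le\e$ for $\e$-almost all $n\in[N]$. I may assume $\abs{g}\le M$: replacing $g(n)$ by its radial projection onto the closed disk of radius $M$ keeps $g$ $Q$-periodic and, since this projection is $1$-Lipschitz and fixes $f(n)$ (as $\abs{f(n)}\le M$), it does not increase $\abs{f(n)-g(n)}$. I then split $\EEE_{n<N} f(n)\mu(n) = \EEE_{n<N} g(n)\mu(n) + \EEE_{n<N}\bra{f(n)-g(n)}\mu(n)$.

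For the error term I use $\abs{\mu}\le 1$ and estimate $\EEE_{n<N}\abs{f(n)-g(n)}$: on the good set (density $\ge 1-\e$) the integrand is $\le\e$, while on the bad set (density $\le\e$) it is $\le 2M$, so the error term is at most $\e(1+2M)$ uniformly in $N$. For the main term, $Q$ is now fixed; writing $g_N(n)=\sum_{a=0}^{Q-1}c_{a,N}\, e(an/Q)$ with $\abs{c_{a,N}}\le M$, I obtain $\abs{\EEE_{n<N} g_N(n)\mu(n)} \le M\sum_{a=0}^{Q-1}\abs{\EEE_{n<N} e(an/Q)\mu(n)}$, and each of the finitely many summands tends to $0$ as $N\to\infty$ by the periodic input. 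Hence $\limsup_{N\to\infty}\abs{\EEE_{n<N} f(n)\mu(n)}\le \e(1+2M)$, and letting $\e\to 0$ completes the argument.

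The main obstacle is the $N$-dependence of the approximant $g_N$ allowed by Definition \ref{def:almost-periodic}: one cannot fix a single periodic $g$ and then let $N\to\infty$, so the limits must be taken in the right order ($N\to\infty$ for fixed $\e$, then $\e\to 0$), and the decay of the main term must hold uniformly over the whole family of candidate approximants. It is precisely this requirement that forces the truncation to $\abs{g}\le M$ and the explicit Fourier bound above, since the resulting estimate $M\sum_{a=0}^{Q-1}\abs{\EEE_{n<N} e(an/Q)\mu(n)}$ no longer depends on the particular coefficients of $g_N$ and therefore decays at a rate independent of the choice of $g_N$.
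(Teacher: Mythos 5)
Your proof is correct and follows essentially the same route as the paper's: approximate $f$ by an $N$-dependent $Q(\e)$-periodic sequence, control the error term by the almost-everywhere closeness, and kill the main term using M\"obius orthogonality of periodic sequences before letting $\e \to 0$. The only difference is that you make explicit (via the finite Fourier expansion with coefficients bounded by $\sup_n\abs{f(n)}$) the uniformity over the family $\{g_N\}$ that the paper's one-line appeal to orthogonality of $1$-bounded periodic sequences leaves implicit; this is a worthwhile detail but not a different argument.
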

\begin{proof}
	Fix any $\e > 0$. Let $Q = Q(\e)$ be the period appearing in Definition \ref{def:almost-periodic}, and for $N \geq 0$ let $g_N \colon [N] \to \CC$ be a $Q$-periodic sequence such that $\abs{ f(n) - g(n)} \leq \e$ for $\e$-almost all $n \in [N]$. We may assume without loss of generality that $\abs{g_N(n)} \leq 1$ for all $n$, and since $Q$-periodic $1$-bounded sequences are orthogonal to $\mu$ we have
\begin{equation}\label{eq:45:00}
	\limsup_{N \to \infty} \abs{ \EEE_{n < N} f(n) \mu(n) } \leq 2\e + \limsup_{N \to \infty}\abs{ \EEE_{n < N} g_N(n) \mu(n) } = 2\e.
\end{equation}
Letting $\e \to 0$ completes the proof.
\end{proof}

Definition \ref{def:almost-periodic} is robust, in the sense that minor modifications lead to equivalent properties.
For later reference, we record several basic observations concerning almost periodic sequences. 
\begin{remark}\label{remark:AP-robust}
\begin{enumerate}
\item\label{item:64-A} It is sufficient to verify the conditions in Definition for $N$ which are powers of $q$; indeed, if a condition holds for $\e$-almost all $n \in [q^{L}]$ then the same condition holds for $q\e$-almost all $n \in [N]$ for any $N$ with $q^{L-1}\leq N \leq q^L$.
\item\label{item:64-B} It is sufficient to verify the conditions in Definition \ref{def:almost-periodic} for $N$ sufficiently large with respect to $\e$; indeed, if the sequence $f$ has an approximation like above by $Q(\e)$-periodic sequences for all $N \geq N_0(\e)$, then $f$ also has analogous approximation by $Q(\e) N_0(\e)$-periodic sequences for all $N \geq 0$.
\end{enumerate}
\end{remark}

\begin{lemma}\label{lem:almost-periodic-product}
The product of two almost periodic sequences is almost periodic.
\end{lemma}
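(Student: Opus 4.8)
The plan is to apply the definition of almost periodicity to each factor with a small auxiliary parameter and then multiply the resulting periodic approximants. Write $f = f_1 f_2$ with $f_1, f_2$ almost periodic, and fix $M$ with $\abs{f_1(n)}, \abs{f_2(n)} \leq M$ for all $n$ (such an $M$ exists since both sequences are bounded). Given $\e > 0$, I would choose an auxiliary $\delta = \delta(\e, M)$ small, concretely $\delta \leq \min(\e/2,\ \e/(2M))$, and invoke Definition \ref{def:almost-periodic} for $f_1$ and for $f_2$ with parameter $\delta$. This produces periods $Q_1, Q_2$ and, for each $N \geq 0$, sequences $g_1, g_2 \colon [N] \to \CC$ of periods $Q_1, Q_2$ respectively such that $\abs{f_i(n) - g_i(n)} \leq \delta$ holds for $\delta$-almost all $n \in [N]$.

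Next I would set $Q = Q_1 Q_2$ and $g = g_1 g_2$, so that $g$ is $Q$-periodic. Exactly as in the proof of Lemma \ref{lem:AP=>Sarnak}, I may assume $\abs{g_i(n)} \leq M$ for all $n$: replacing $g_i(n)$ by its nearest point in the closed disk of radius $M$ only decreases $\abs{f_i(n) - g_i(n)}$, since that disk is convex and contains $f_i(n)$. Let $B \subset [N]$ be the union of the two exceptional sets where the bounds $\abs{f_i - g_i} \leq \delta$ fail; then $\abs{B} \leq 2\delta N \leq \e N$. For $n \notin B$, the telescoping identity $f_1 f_2 - g_1 g_2 = f_1(f_2 - g_2) + (f_1 - g_1)g_2$ yields $\abs{f(n) - g(n)} \leq \abs{f_1(n)}\,\abs{f_2(n)-g_2(n)} + \abs{f_1(n)-g_1(n)}\,\abs{g_2(n)} \leq M\delta + \delta M = 2M\delta \leq \e$. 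Thus $g$ is a $Q$-periodic sequence approximating $f$ to within $\e$ off the set $B$ of relative size at most $\e$, which is precisely what Definition \ref{def:almost-periodic} demands.

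I do not expect any serious obstacle here; the argument is entirely elementary. The only two points requiring genuine care are: controlling $\abs{g_2}$, which is the reason for normalising the approximants to be $M$-bounded before forming the product; and handling the two exceptional sets simultaneously via a union bound, which forces the constraint $\delta \leq \e/2$ so that their combined density stays below the allowed threshold $\e$. Finally, I would remark that the same argument establishes the $q$-almost periodic analogue: when $Q_1$ and $Q_2$ are both powers of $q$, so is their product $Q_1 Q_2$, so the product of two $q$-almost periodic sequences is again $q$-almost periodic.
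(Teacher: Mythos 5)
Your proof is correct and is exactly the "standard method" the paper invokes without writing out: approximate each factor to accuracy $\delta\le\min(\e/2,\e/(2M))$, take the product of the (suitably truncated) periodic approximants with period $Q_1Q_2$, and combine the exceptional sets by a union bound. The normalisation $\abs{g_i}\le M$ and the observation that powers of $q$ are closed under products (giving the $q$-almost periodic case) are the right points to flag; nothing is missing.
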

\begin{proof}
	This follows directly from definition using standard methods.
\end{proof}

Our plan is to show that almost periodic sequences are essentially the only instances when Theorem \ref{thm:BSZ} cannot be applied to prove Theorem \ref{thm:A}. For the purposes of this paper, we shall say that a sequence satisfies the Katai--Bourgain--Sarnak--Ziegler criterion if it satisfies the assumptions of Theorem \ref{thm:BSZ} (note that \cite{BourgainSarnakZiegler-2013} contains criteria with somewhat weaker assumptions).

\begin{alphatheorem}\label{thm:B}
	Let $f \colon \NN_0 \to \UU$ be a $q$-semimultiplicative sequence. Then at least one of the following holds:
\begin{enumerate}
\item\label{item:B1}
The sequence $f$ is almost periodic.
\item\label{item:B2}
The sequence $f$ satisfies the Katai--Bourgain--Sarnak--Ziegler criterion.
\end{enumerate}
\end{alphatheorem}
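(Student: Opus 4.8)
The plan is to convert the functional equation into a product formula over digit windows, and then to separate, by a single equidistribution functional, the degenerate case (almost periodicity) from the generic case (where the Katai--Bourgain--Sarnak--Ziegler criterion applies).

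First I would extract the local structure. Writing $f(n)=e(F(n))$ with $F(n)\in\RR/\ZZ$, the defining relation of $q$-semimultiplicativity, taken at scale $l=i$, reads
$$ F(n)\equiv F\bigl(n|_{\{i,i+1,\dots\}}\bigr)+F\bigl(n|_{\{0,\dots,i+r-1\}}\bigr)-F\bigl(n|_{\{i,\dots,i+r-1\}}\bigr)\pmod 1, $$
an inclusion--exclusion identity for the splitting of $\supp(n)$ into a high ray and a low ray overlapping in a window of length $r$. Peeling off the top digit one position at a time and telescoping, I obtain, for every $n$ with $\supp(n)\subset[0,L)$, the product formula
$$ f(n)=f\bigl(n|_{\{0,\dots,r\}}\bigr)\prod_{j=r+1}^{L-1}\lambda_j\bigl(d_j\mid n|_{\{j-r,\dots,j-1\}}\bigr),\qquad d_j=\bra{(n)_q}_j, $$
where $\lambda_j(d\mid m):=f(m+dq^{j})/f(m)\in\UU$ (for $m$ supported on $\{j-r,\dots,j-1\}$) is the phase contributed by placing the digit $d$ at position $j$ given the block $m$ of the $r$ digits immediately below it. For $r=0$ this is exactly the $q$-multiplicative product $\prod_j f(d_jq^j)$, so the formula is the correct generalisation and reduces $f$ to the family of digit phases $\lambda_j$.

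Next I would introduce the equidistribution functional. For each $j\geq r$ write $\lambda_j(d\mid m)=e(\vartheta_j(d\mid m))$ and set
$$ \delta_j:=\min_{\beta\in\RR/\ZZ}\ \EEE_{m}\ \EEE_{d\in\Sigma_q}\ \fpa{\vartheta_j(d\mid m)-d\beta}^2, $$
the mean-square distance of the position-$j$ phases from a linear character in the top digit (the inner average over $m$ ranging over the $q^{r}$ admissible lower blocks), and let $\beta_j$ denote a minimising slope. The whole dichotomy is governed by $\sum_j\delta_j$ together with the Diophantine nature of the accumulated slopes $\sum_{j\le L}\beta_j$. In the degenerate case --- $\sum_j\delta_j<\infty$ with the accumulated slopes converging to rationals of bounded denominator --- I claim almost periodicity. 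Indeed, given $\e>0$ choose $L_0$ with $\sum_{j>L_0}\delta_j$ small; rounding each active slope beyond $L_0$ to a rational with a common denominator $q^{L_1}$ and absorbing the finitely many factors below $L_0$ produces a genuinely $q^{L_0+L_1}$-periodic sequence $g$, while a Chebyshev bound fed by $\sum_{j>L_0}\delta_j\to0$ shows $\abs{f(n)-g(n)}\le\e$ for $\e$-almost all $n\in[q^L]$. By Remark \ref{remark:AP-robust}(\ref{item:64-A}) it suffices to test at $N=q^L$, and Lemma \ref{lem:almost-periodic-product} lets me combine the linear and periodic pieces; hence $f$ is $q$-almost periodic and (\ref{item:B1}) holds.

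Finally, in the generic case --- $\sum_j\delta_j=\infty$, or slopes irrational/of unbounded denominator --- I must verify (\ref{item:B2}), that is $\EEE_{n<N}f(pn)\bar f(p'n)\to0$ for all sufficiently large distinct primes $p,p'$. Applying the van der Corput inequality in $n$ reduces this to an average over shifts $h$ of correlations of the quotients $f(m+a)\bar f(m)$ with $a=ph$ and $a'=p'h$, evaluated along $m=pn$ and $m=p'n$. The technical core is a carry lemma in the spirit of Mauduit--Rivat: for all but a negligible proportion of $n$, adding $ph$ (respectively $p'h$) perturbs the base-$q$ digits of $n$ only on a controlled set of positions and acts affinely on the high digits, so that, after inserting the product formula, $f(m+a)\bar f(m)$ factors into per-position phases into which the free digits enter independently. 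The failure of summability of $\sum_j\delta_j$ (or irrationality of the slopes) then forces, for a positive proportion of positions, the corresponding one-digit exponential sum to have modulus at most $q^{-c}$ of the trivial bound, yielding geometric decay in $L$ that survives the van der Corput averaging. I expect this last step --- quantifying how multiplication by two distinct primes redistributes digits, and converting non-summability of the defect into genuine cancellation in the resulting digit exponential sums --- to be the main obstacle, as it is precisely the analytic heart of the Mauduit--Rivat circle of methods.
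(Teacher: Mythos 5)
Your degenerate case is essentially sound (it parallels the paper's product formula, Lemma \ref{lem:multi}, and the implication \eqref{cond:54:B}$\Rightarrow$\eqref{cond:54:C} of Proposition \ref{prop:convergence-criterion}), but your generic case contains the real gap: you have reduced the theorem to proving that non-summability of the defects $\delta_j$ (or irrationality of the slopes) forces $\EEE_{n<N}f(pn)\bar f(p'n)\to 0$, and you have only named the ingredients of that proof --- a van der Corput step, a carry lemma, and a mechanism converting digit-level defects into cancellation after multiplication by $p$ and $p'$. That reduction is not a simplification; it is the entire difficulty. Multiplication by $p$ does not act digit-locally, carries propagate through the windows of your product formula, and the claimed implication is essentially the hard direction of an Indlekofer--K\'atai-type characterisation (Theorem \ref{thm:IK}) for correlations of dilates. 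The paper explicitly states that such a characterisation \emph{is not available} for $q$-semimultiplicative sequences, and the whole design of the argument is to avoid ever proving cancellation of $\EEE_n f(pn)\bar f(p'n)$ directly. A secondary issue: your generic case also covers ``slopes irrational or of unbounded denominator'' with $\sum_j\delta_j<\infty$, and your sketch does not address that subcase at all (the pure linear phase $e(n\alpha)$, $\alpha$ irrational, lands there and needs a separate, if easy, Weyl-type argument; a general mixture of a small defect with a drifting slope needs more).

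The paper's route inverts the logic so that only \emph{soft} information is ever needed. Assume the KBSZ criterion fails for $p,p'$. The correlation $g(n)=f(pn)\bar f(p'n)$ is itself $q$-quasimultiplicative (closure under dilation by $p$ and products), so Proposition \ref{prop:convergence-criterion} applies to $g$ and converts ``Ces\`aro averages of $g$ do not vanish'' into the local rigidity statement $\sum_i \lb{g}{I_i}<\infty$, i.e.\ $g\approx 1$ on integers supported on high digit windows. Proposition \ref{lem:bilinear-sums} then shows $f$ must locally look like $e(\alpha n)$ with $\alpha$ rational of denominator dividing $p-p'$; after twisting this away, Corollary \ref{cor:structured} transfers smallness of $\lb{g}{\cdot}$ to smallness of $\lb{f}{\cdot}$ on intervals of bounded length, semimultiplicativity bootstraps this to all intervals via Proposition \ref{prop:global<local}, and Proposition \ref{prop:convergence-criterion} applied now to $f$ yields almost periodicity. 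At no point is a bilinear exponential-sum estimate proved. If you want to pursue your direct approach you would in effect be reproving (and extending) the Mauduit--Rivat machinery, which is a much longer undertaking than the theorem requires.
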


\begin{proof}[Proof of Theorem \ref{thm:A} assuming Theorem \ref{thm:B}]
	If $f$ is almost periodic, the claim follows from Lemma \ref{lem:AP=>Sarnak}. Otherwise, $f$ satisfies the Katai--Bourgain--Sarnak--Ziegler criterion, and the claim follows from Theorem \ref{thm:BSZ}; .	
\end{proof}

For $q$-multiplicative sequences, a variant of Theorem \ref{thm:B} is essentially due to Indlekofer and Katai, and a corresponding prime number theorem is shown by Martin, Mauduit and Rivat in \cite{MartinMauduitRivat-2014}. The following is a special case of \cite[Theorem 1]{IndlekoferKatai-2002}, modulo some changes in notation and simple reductions.

\begin{theorem}\label{thm:IK}
	Let $f \colon \NN_0 \to \UU$ be a $q$-multiplicative sequence, and let $p,p' \geq 1$ be integers coprime to each other and to $q$. Then the following conditions are equivalent:
	\begin{enumerate}
	\item There exists $k \geq 0$ such that \[\limsup_{N \to \infty} \abs{\EEE_{n < N} f(pq^kn) \bar f(p'q^kn)} > 0;\]
	\item There exists $\gamma \in \QQ$ such that $q^k (p-p')\gamma \in \ZZ$ for some $k \geq 0$ and 
	\[\sum_{l = 0}^{\infty} \sum_{a =0}^{q-1} \Re\bra{1 - f(aq^l)e(-\gamma a q^l)} < \infty.\]
	\end{enumerate}
\end{theorem}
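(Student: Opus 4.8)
The plan is to treat the two implications separately, reducing the implication $(ii)\Rightarrow(i)$ to the classical mean-value criterion of Delange for $q$-multiplicative functions, and isolating the substantive content in the converse, for which the cited theorem of Indlekofer and Katai does the heavy lifting. As preliminaries I would use that a $q$-multiplicative $f$ factors over digits, $f(n) = \prod_l f(a_l q^l)$ for $n = \sum_l a_l q^l$, and that for any $\beta$ the twist $f_\beta(n) := f(n)e(-\beta n)$ is again $q$-multiplicative with mean value $M(f_\beta) = \prod_l \bra{\EEE_{a<q} f(aq^l)e(-\beta a q^l)}$, this product converging to a nonzero limit precisely when $\sum_l \sum_a \Re\bra{1 - f(aq^l)e(-\beta aq^l)} < \infty$ (and no local factor vanishes). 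This is the one-variable shadow of condition $(ii)$.

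For the implication $(ii)\Rightarrow(i)$, given $\gamma$ and $k_0$ with $q^{k_0}(p-p')\gamma\in\ZZ$ and the Delange sum for $\gamma$ finite, set $g := f_\gamma$. Since $pq^kn = q^k(pn)$ has vanishing digits below position $k$, the identity $f(pq^kn)\bar f(p'q^kn) = g(pq^kn)\bar g(p'q^kn)\cdot e\bra{\gamma(p-p')q^kn}$ together with $q^k(p-p')\gamma\in\ZZ$ (which persists for every $k\ge k_0$) collapses the phase to $1$, giving $f(pq^kn)\bar f(p'q^kn) = g(pq^kn)\bar g(p'q^kn)$. I would then show this average tends to $1$ once $k$ is large: because $p$ is coprime to $q$, the digits of $pq^kn$ at positions $\ge k$ equidistribute as $n$ ranges over $[q^L]$, up to carry effects of bounded size, so $\EEE_n\abs{1 - g(pq^kn)}^2 = 2\bra{1 - \Re\prod_{j\ge k}\EEE_{a<q}g(aq^j)} + o(1)$, and convergence of the Delange sum forces the tail product to tend to $1$ as $k\to\infty$. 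A Cauchy--Schwarz argument then gives $\EEE_n\abs{1 - g(pq^kn)}\to 0$, and likewise for $p'$, so the correlation is $1 + o(1)$ and $(i)$ holds for $k$ large.

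For the implication $(i)\Rightarrow(ii)$, which is the essential direction and the main obstacle, the strategy is an inverse argument: I would express the scale-$q^L$ correlation as a product over digit positions of local correlations, where the coprimality of $p,p'$ to $q$ makes the mod-$q^L$ dilations bijective and the carry property confines the coupling between the digits of $pq^kn$ and $p'q^kn$ to bounded range, and observe that a positive limsup forces the infinite product of local moduli to stay bounded away from $0$. Each local factor attains maximal modulus only when the values $f(aq^l)$ align with a single linear phase $a\mapsto e(\gamma a q^l)$ common to both dilations; the requirement that one such $\gamma$ serve simultaneously for $p$ and $p'$ is exactly what produces the constraint $q^k(p-p')\gamma\in\ZZ$, while summing the defects from perfect alignment over all $l$ yields the convergent Delange sum of $(ii)$.

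Carrying this last step out rigorously is precisely the content of \cite[Theorem 1]{IndlekoferKatai-2002}, and I would deduce Theorem~\ref{thm:IK} from it after matching normalizations (their weighting of the digit sum versus ours) and reducing the dilation by $q^k$ to their framework by absorbing the factor $q^k$ into a shift of digit positions, both of which are routine. I expect the delicate points to be the control of carries in the two-variable local correlations and the passage from the $L^2$, sum-of-squares convergence natural to Delange theory to the pointwise alignment of phases that pins down a single rational $\gamma$.
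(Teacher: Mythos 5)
First, for context: the paper does not actually prove Theorem~\ref{thm:IK} --- it is imported wholesale, being stated as ``a special case of \cite[Theorem 1]{IndlekoferKatai-2002}, modulo some changes in notation and simple reductions.'' So for the hard direction your plan (reduce to the Indlekofer--Katai theorem after renormalising and absorbing the $q^k$-dilation) is exactly what the paper does, and your description of the inverse mechanism behind that theorem is a reasonable account of where the constraint $q^k(p-p')\gamma\in\ZZ$ comes from.

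There is, however, a genuine gap in your self-contained argument for $(ii)\Rightarrow(i)$. You claim that convergence of the Delange sum forces $\prod_{j\ge k}\EEE_{a<q}g(aq^j)\to 1$ as $k\to\infty$, and hence $\EEE_n\abs{1-g(pq^kn)}^2\to 0$. This is false: condition $(ii)$ only controls the \emph{real parts} $\Re\bra{1-g(aq^j)}$, i.e.\ roughly $\sum_{j,a}\fpa{\theta_{j,a}}^2<\infty$ where $g(aq^j)=e(\theta_{j,a})$. This makes the \emph{modulus} of the tail product tend to $1$, but the phase of each factor is of order $\EE_a\theta_{j,a}$, and $\sum_j\abs{\EE_a\theta_{j,a}}$ may diverge (take $\theta_{j,a}\sim a/j$); the tail products then fail to converge at all, and $\EEE_n\abs{1-g(pq^kn)}^2$ stays bounded away from $0$ for every $k$. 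Consequently your Cauchy--Schwarz step collapses. The repair is standard and is the same device the paper uses elsewhere (compare Lemma~\ref{lem:concentration} and the role of $\b$ there): approximate $g(pq^kn)$ in $L^2$ not by $1$ but by a unimodular constant $z_k$ (the phase of $\EEE_n g(pq^kn)$), and likewise $g(p'q^kn)$ by $w_k$; then $\EEE_n g(pq^kn)\bar g(p'q^kn)=z_k\bar w_k+o(1)$ still has modulus $1+o(1)$, which is all that $(i)$ requires. With that correction (and with the carry analysis for the digits of $pn$ actually carried out, which you only gesture at), the direction $(ii)\Rightarrow(i)$ goes through.
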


Theorem \ref{thm:B} for $q$-multiplicative sequences follows rather easily from the above statement. We skip some details of the proof, since we will encounter similar arguments in the remainder of the paper.

\begin{proof}[Proof of Theorem \ref{thm:B} for $q$-multiplicative sequences]
	Suppose that a $q$-multiplicative sequence $f \colon \NN_0 \to \UU$ does not satisfy the Katai--Bourgain--Sarnak--Ziegler criterion for some distinct primes $p,p'$ not dividing $q$. Then, by Theorem \ref{thm:IK}, there exists $\gamma \in \QQ$ such that 
	\[\sum_{l = 0}^{\infty} \sum_{a =0}^{q-1} \Re\bra{1 - f(aq^l)e(-\gamma a q^l)} < \infty.\]
Let $\varphi \colon \NN_0 \to \RR$ be such that $f(n) = e(\varphi(n)+\gamma n)$ for $n \geq 0$. It will suffice to show that $e(\varphi(n))$ is almost periodic. It follows by elementary analysis that
	\[\sum_{l = 0}^{\infty} \sum_{a < q} \abs{ \varphi(aq^l)}^2 < \infty.\]
Take any $\e > 0$. It is now not hard to deduce that there exists some $K = K(\e)$ such that for all $L > K$, $\e$-almost all values of $\varphi(n)$ for $n$ with $\supp(n) \subset [K,L)$ lie within a ball of radius $\e$. (One can treat each digit of $n < q^L$ as a separate random variable, uniformly distributed on $\Sigma_q$, and use additivity of variance and Chebyshev bound to obtain a concentration result for $\varphi(n)$ with $\supp(n) \subset [K,L)$; see also Proposition \ref{prop:convergence-criterion}.) Hence, for $K$ and $L$ as above, there exists a sequence $\psi \colon \NN_0 \to \RR$ with period $q^K$ such that $\fpa{ \varphi(n) - \psi(n)} \leq \e$ for $\e$-almost all $n \in [q^L]$. It follows that $e(\varphi(n))$ is $q$-almost periodic (see Remark \ref{remark:AP-robust}).
\end{proof}

Our strategy for the more general class of $q$-semimultiplicative sequences is to mimic the above proof. We face several technical difficulties. In particular, the analogue of Theorem \ref{thm:IK} is not available, and we provide a somewhat weaker result which is sufficient for our applications. 

\makeatletter{}\section{Multiplicativity}\label{sec:Background}

\newcommand{\llambda}{g}

In this section we discuss several different notions related to multiplicative (and additive) behaviour of sequences related to their expansion in a given base $q$. Recall that $n|_{I}$ denotes integer obtained by replacing all digits in the expansion of $n$ outside $I$ with zeros. We say that two sets $I,J \subset \ZZ$ are \emph{separated by gaps of length $> r$} if $\abs{i-j} > r$ for any $i \in I,\ j \in J$.

\begin{definition}\label{def:multi}
Let $f \colon \NN_0 \to G$ be a sequence taking values in an abelian group.\begin{enumerate}
\item\label{item:multi:1} The sequence $f$ is \emph{$q$-multiplicative} if 
\begin{equation}\label{eq:96:11}
f(n+m) = f(n)f(m) 
\end{equation}
for any $n,m \geq 0$ such that such that $0 \leq m < q^l$ and $q^l | n$ for some $l \geq 0$. The set of $q$-multiplicative sequences $\NN_0 \to G$ is denoted by $\M{q}{}(G)$.

\item\label{item:multi:3} The sequence $f$ is \emph{$q$-semimultiplicative} if there exists $r \geq 0$ such that
\begin{equation}\label{eq:96:01}
f(n+m+k) = f(n+m) f(m)^{-1} f(m+k)
\end{equation}
for any $n,m,k \geq 0$ such that $k < q^l$, $q^l \mid m$, $m < q^{l+r}$ and $q^{l+r} | n$ for some $l \geq 0$, and additionally $f(0) = \mathrm{id}_G$. We refer to the least such $r$ as the \emph{gap} of $f$. The set of $q$-semimultiplicative sequences $\NN_0 \to G$ with gap $\leq r$ is denoted by $\SM{q}{r}(G)$ and $\SM{q}{}(G) = \bigcup_{r = 0}^\infty \SM{q}{r}(G)$.

\item\label{item:multi:2} The sequence $f$ is \emph{$q$-quasimultiplicative} if there exists $r \geq 0$ such that 
\begin{equation}\label{eq:96:21}
f(n+m) = f(n)f(m) 
\end{equation}
for any $n,m \geq 0$ such that $m < q^l$ and $q^{l+r} | n$ for some $l \geq 0$. We refer to the least such $r$ as the \emph{gap} of $f$. The set of $q$-quasimultiplicative sequences $\NN_0 \to G$ with gap $\leq r$ is denoted by $\QM{q}{r}(G)$ and $\QM{q}{}(G) = \bigcup_{r = 0}^\infty \QM{q}{r}(G)$.
 
\item\label{item:multi:4} The sequence $f$ is \emph{strongly $q$-multiplicative} if $f$ is $q$-multiplicative and additionally $f(qn) = f(n)$ for all $n \geq 0$. The notions of strongly $q$-semimultiplicative and strongly $q$-quasimultiplicative sequences are defined accordingly.
 
\item\label{item:multi:5} If the group $G$ is written additively, we refer to sequences $f$ satisfying the above conditions as $q$-additive, $q$-semiadditive, etc., instead and write $\A{q}{}(G)$ in place of $\M{q}{}(G)$, $\QA{q}{}(G)$ in place of $\QM{q}{}(G)$, etc.
\end{enumerate}
\end{definition}

It is often more convenient to see $q$-multiplicative functions as defined in terms of coefficients rather than relations.
\begin{lemma}\label{lem:multi}
Let $f \colon \NN_0 \to G$ be a sequence taking values in an abelian group.\begin{enumerate}

\item\label{item:multi2:1} The sequence $f$ belongs to $\M{q}(G)$ if and only if it takes the form 
\begin{equation}\label{eq:96:10}
 f(n) = \prod_{i=0}^\infty \llambda(n|_{\{i\}})
\end{equation}
for some coefficients $\llambda(\cdot)$ with $\llambda(0) = 1$. 

\item\label{item:multi2:3} The sequence $f$ belongs to $\SM{q}{r}(G)$ if and only if it takes form 
\begin{equation}\label{eq:96:30}
f(n) = \prod_{i=0}^\infty \llambda(n|_{[i,i+r]})
\end{equation}
 for some coefficients $\llambda(\cdot)$ with $\llambda(0) = 1$.
 
\item\label{item:multi2:2} The sequence $f$ belongs to $\QM{q}{r}(G)$ if and only if it takes form 
\begin{equation}\label{eq:96:20}
f(n) = \prod_{\operatorname{gap}(I) \leq r} \llambda(n|_{I})
\end{equation}
for some coefficients $\llambda(\cdot)$ with $\llambda(0) = 1$, where the product runs over all finite sets $I \subset \NN_0$ such that for each $i \in I$ either $i = \max I$ or $(i,i+r] \cap I \neq \emptyset$. 

\end{enumerate}
\end{lemma}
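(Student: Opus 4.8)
The plan is to prove the three equivalences separately, in order of increasing complexity, treating each as an instance of the same principle: a sequence obeying a locality condition is precisely one assembled from local coefficients, and the coefficients are recovered from $f$ by stripping off one digit (in \eqref{item:multi2:3}) or one cluster of digits (in \eqref{item:multi2:2}) at a time via the defining relation. Part \eqref{item:multi2:1} is the model. For the forward implication I would induct on $\abs{\supp(n)}$: writing $j = \max\supp(n)$ and $n = n|_{\{j\}} + n|_{[0,j)}$ with $q^j \mid n|_{\{j\}}$ and $n|_{[0,j)} < q^j$, relation \eqref{eq:96:11} gives $f(n) = f(n|_{\{j\}})\,f(n|_{[0,j)})$, and the inductive hypothesis finishes; one sets $\llambda(n|_{\{i\}}) := f(n|_{\{i\}})$, with $\llambda(0) = f(0) = \mathrm{id}_G$. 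The converse is a direct factorisation check: if $q^l \mid n$ and $m < q^l$ then $(n+m)|_{\{i\}} = m|_{\{i\}}$ for $i < l$ and $(n+m)|_{\{i\}} = n|_{\{i\}}$ for $i \geq l$, so the product \eqref{eq:96:10} splits as $f(m)f(n)$.

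For part \eqref{item:multi2:3} I would follow the same scheme but peel the lowest digit using a look-ahead window of width $r$. Applying the semimultiplicativity relation \eqref{eq:96:01} with $l = i+1$, $k = n|_{\{i\}}$, $m = n|_{(i,i+r]}$, and high part $n|_{[i+r+1,\infty)}$, and telescoping over $i$, one obtains that $f(n)$ equals the product over $i$ of the factors $f(n|_{[i,i+r]})\,f(n|_{(i,i+r]})^{-1}$. Each factor depends only on the digit pattern in the window $[i,i+r]$ and equals $\mathrm{id}_G$ exactly when the bottom digit of the window vanishes, so it is precisely the marginal contribution of the digit at position $i$ in the context of the next $r$ digits; this furnishes the coefficients $\llambda(\cdot)$ of \eqref{eq:96:30}. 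The converse verifies \eqref{eq:96:01} by checking that, under the separation hypotheses $k < q^l$, $q^l \mid m < q^{l+r}$, $q^{l+r}\mid n$, the window restrictions of $n+m+k$, $n+m$, $m$ and $m+k$ agree in the appropriate ranges, so that the products telescope to the claimed relation.

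Part \eqref{item:multi2:2} is handled analogously, but now relation \eqref{eq:96:21} only permits splitting across gaps of length $> r$. Consequently $f(n) = \prod_B f(n|_B)$, where $B$ ranges over the maximal clusters of $\supp(n)$, i.e.\ the maximal sets in which consecutive elements differ by at most $r$, which are exactly the maximal admissible sets figuring in \eqref{eq:96:20}. One then resolves each cluster value $f(n|_B)$ into the over-determined product indexed by all admissible $I$, defining the coefficients $\llambda(n|_I)$ recursively by inclusion--exclusion over the poset of admissible subsets so that each cluster contributes $f(n|_B)$ exactly once. As before, the converse is a factorisation check across a gap of length $> r$.

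I expect the main obstacle to lie in the bookkeeping for parts \eqref{item:multi2:3} and \eqref{item:multi2:2}: because the windows, respectively the admissible sets, overlap, one must define the coefficients so that each digit or cluster is accounted for exactly once, confirm that the coefficient genuinely depends only on the content of the relevant window (equivalently, that sliding the window past a zero bottom digit produces $\mathrm{id}_G$), and verify that the resulting finite products telescope to $f(n)$. By contrast, once the separation hypotheses are translated into statements about which window restrictions coincide, the converse directions in all three parts are routine.
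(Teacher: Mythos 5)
Your overall strategy coincides with the paper's, which disposes of the lemma in four lines: the direction ``product form $\Rightarrow$ relation'' is declared a direct inspection, and for the converse one multiplies $f$ by a product-form sequence chosen to agree with $f$ on all integers supported on a singleton, a single window, or a single admissible set, and then shows the quotient is trivial by induction on $\abs{\supp(n)}$. Your telescoping identity $f(n)=\prod_i f(n|_{[i,i+r]})\,f(n|_{(i,i+r]})^{-1}$ is a correct and more explicit way of running that induction (peeling the lowest digit rather than splitting at a gap), and your cluster decomposition together with the recursive inclusion--exclusion definition of the coefficients in the quasimultiplicative case is likewise the right mechanism; the converse directions are indeed routine window-by-window checks.

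The point you defer to ``bookkeeping'' is, however, the only delicate step, and as set up it does not close. The factor $f(n|_{[i,i+r]})\,f(n|_{(i,i+r]})^{-1}$ is a function of the pair (position $i$, digits in the window), not of the integer $v=n|_{[i,i+r]}$ alone: when the digit at position $i$ vanishes but the window restriction is nonzero, the factor is $\mathrm{id}_G$, whereas the same integer $v$, read from the window anchored at $\min\supp(v)$, gives a generally nontrivial value. Consequently the rule $\llambda(v):=f(v)\,f\bigl(v-v|_{\{\min\supp(v)\}}\bigr)^{-1}$ does not satisfy \eqref{eq:96:30} read literally: for $n=q^j$ with $j\geq r$ one has $n|_{[i,i+r]}=q^j$ for all $i\in[j-r,j]$, so the right-hand side of \eqref{eq:96:30} is $\llambda(q^j)^{r+1}$, forcing $f(q^j)$ to be an $(r+1)$-st power in $G$ --- false in general (take $G=\ZZ$ and $f=s_q$, which lies in $\SA{q}{r}(\ZZ)$ for every $r$ and has $s_q(q^j)=1$). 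The same overcounting occurs in \eqref{eq:96:20}, where infinitely many admissible $I$ yield the same restriction $n|_I$. The repair is to interpret the products as running only over primary occurrences --- over $i\in\supp(n)$ in \eqref{eq:96:30} and over admissible $I\subseteq\supp(n)$ in \eqref{eq:96:20} --- for then $n|_I$ determines $I$, your coefficients are well defined, and your telescoping and inclusion--exclusion arguments go through verbatim; this is also the reading required for the multiplicity count in the proof of Lemma \ref{lem:semimulti-and-lambda}. The paper's own proof elides the identical point, so this is as much a defect of the statement's formulation as of your argument, but since you flagged the well-definedness of the coefficients as the expected obstacle without resolving it, this convention (or an equivalent position-aware definition of $\llambda$) is the one thing you must still supply.
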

\begin{proof}
	It is clear by direct inspection that if $f$ satisfies any of the properties \eqref{item:multi2:1}, \eqref{item:multi2:3} or \eqref{item:multi2:2} above, then it also satisfies the corresponding property in Definition \ref{def:multi}, so it remains to prove the reverse implication. Suppose now that $f$ satisfies one of the conditions \eqref{item:multi:1}, \eqref{item:multi:3} or \eqref{item:multi:2} in Definition \ref{def:multi}. Multiplying $f$ by the appropriate sequence of the form \eqref{eq:96:10},  \eqref{eq:96:30} or  \eqref{eq:96:20} respectively, we may assume that $f(n|_{I}) = \mathrm{id}_G$ for all $n \geq 0$ and all $I$ which, depending on the case under consideration, are singletons for \eqref{item:multi:1}, have no gaps of length $> r$ for \eqref{item:multi:3}, or are intervals of length $r+1$ for \eqref{item:multi:2}. It will suffice to show that $f(n) = \mathrm{id}_G$ for all $n \geq 0$. This follows by induction on the cardinality of $\supp(n)$.	
\end{proof}

To justify the terminology introduced above, we record some relations between the above notions. Let $G$ be an abelian group. Then 
\[ \M{q}{}(G) = \SM{q}{0}(G) = \QM{q}{0}(G),\]
whence the terms introduced in Definition \ref{def:multi} can be construed as refinements of the notion of $q$-multiplicativity. As suggested by the prefixes semi- and quasi-, we have 
\[ \SM{q}{r}(G) \subseteq \QM{q}{r}(G) \text{ for any $r \geq 0$.} \]
Also, as the notation suggests,
\[ \SM{q}{r}(G) \subseteq \SM{q}{r+1}(G) \text{ and } \QM{q}{r}(G) \subseteq \QM{q}{r+1}(G) \text{ for any $r \geq 0$.} \]
Verification of each of these properties is standard; for similar statements, see also \cite{KropfWagner-2017}. In general, each of the inclusions just mentioned is strict.

\begin{example}\label{ex:multi}
The simplest examples of non-trivial $q$-quasimultiplicative sequences are related to counting patterns. For $u \in \Sigma_q^*$, not consisting only of $0$'s, let $\freq_q^u(n)$ denote the number of times $u$ appears in the expansion $(n)_q$ (if $u$ starts with any leading zeros, then treat $(n)_q$ as starting with an infinite string of zeros). 

For $\alpha \in \RR$, the sequence $\varphi(n) = \freq_q^u(n) \alpha$ is strongly $q$-semimultiplicative with gap $ = \abs{u} - 1$, and $f(n) = e(\varphi(n))$ is strongly $q$-quasimultiplicative with gap $\leq \abs{u} - 1$. The classical Thue--Morse and Rudin--Shapiro sequences fall into this category and are given by $e\bra{\freq_2^{1}(n) / 2}$ and $e\bra{\freq_2^{11}(n) / 2}$ respectively. 	
\end{example}

The notion of a (strongly) $q$-multiplicative sequence is well-established and well-studied; investigations into these classes of sequences date back to \cite{Gelfond-1968, Delange-1972}; see \cite{Katai-2002} for a survey of more recent results. Kropf and Wagner \cite{KropfWagner-2017} study strongly $q$-multiplicative sequences; we refer to their paper for more background and examples. We also note a slight terminological difference: Kropf and Wagner use the term $q$-quasimultiplicative sequence for what we call a strongly $q$-quasimultiplicative sequences 
\footnote{To justify the change of terminology, let us note that one would expect that a $q$-multiplicative sequence should be $q$-quasimultiplicative. This is true with the terminology introduced above, but not with the one in \cite{KropfWagner-2017}. Similar remarks are made in \cite[Section 1]{KropfWagner-2017}.}
Generalised Thue--Morse sequences, studied by Drmota and Morgenbesser in \cite{DrmotaMorgenbesser-2012}, taking values in an abelian group are precisely the same as strongly $q$-multiplicative sequences. Strongly $q$-semiadditive sequences taking values in $\RR$ have been studied under the name of digital sequences, see e.g.\ \cite[Sec.\ 3.3]{AlloucheShallit-book}.
A notion analogous to a $q$-multiplicative sequence for Zeckendorf representation is studied in \cite{DrmotaMullnerSpiegelhofer-2017}.

We record some desirable closure properties. The following lemma is a key reason for our interest in $q$-quasimultiplicative sequences.
\begin{lemma} Let $G$ be an abelian group and let $r \geq 0$. 
	\begin{enumerate}
	\item The classes $\M{q}{}(G)$, $\SM{q}{r}(G)$ and $\QM{q}{r}(G)$ are closed under pointwise products, under pointwise conjugation, and under the operation of dilating by the factor of $q$ which takes a function $f$ to the function $\tilde f$ given by $\tilde f(n) = f(qn)$.
	\item The class $\QM{q}{}(G)$ is closed under restriction to linear subsequences. More precisely, if $f \in \QM{q}{r}(G)$ then for any integers $a \geq 1$, $b \geq 0$, the sequence $\tilde f$ given by $\tilde f(n) = f(an+b)$ is $q$-quasimultiplicative with gap $\leq r + O(\log(ab))$.
	\item If $p \geq 2$ is an integer with $\log_q p \in \QQ$ (equivalently, $p$ and $q$ are powers of the same integer) then $\SM{q}{}(G) = \SM{p}{}(G)$ and $\QM{q}{}(G) = \QM{p}{}(G)$.
	\end{enumerate}
\end{lemma}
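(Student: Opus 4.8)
The plan is to read items (i)--(iii) off the defining relations of Definition \ref{def:multi}, invoking the coefficient descriptions of Lemma \ref{lem:multi} only for the single step where the relations alone do not suffice; throughout I use that $G$ is abelian to rearrange products. For item (i) I would argue directly. If $f,f'$ satisfy one of \eqref{eq:96:11}, \eqref{eq:96:01}, \eqref{eq:96:21}, then the pointwise product $ff'$ and the pointwise conjugate $n\mapsto f(n)^{-1}$ (complex conjugation when $G=\UU$) satisfy the same relation, since each relation is a multiplicative identity preserved under multiplying, and under inverting, its two sides. For the dilation $\tilde f(n)=f(qn)$ I would note that multiplication by $q$ raises every base-$q$ digit by one position: if $q^l\mid n$ and $m<q^l$ then $q^{l+1}\mid qn$ and $qm<q^{l+1}$, and likewise the hypotheses of \eqref{eq:96:01} and \eqref{eq:96:21} transform into themselves with $l$ replaced by $l+1$. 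Hence $\tilde f$ obeys the same relation as $f$ with the same gap and $\tilde f(0)=f(0)=\mathrm{id}_G$, settling $\M{q}{}(G)$, $\SM{q}{r}(G)$ and $\QM{q}{r}(G)$ at once.

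For item (ii), fix $f\in\QM{q}{r}(G)$ and pick $s=O(\log(ab))$ so large that $am+b<q^{l+s}$ for all $m<q^l$ and $b<q^s$; set $r'=r+s$ and $g(n)=f(b)^{-1}f(an+b)$. If $m<q^l$ and $q^{l+r'}\mid n$, then $q^{l+r'}\mid an$, and two applications of the quasimultiplicative relation for $f$ give
\[
  f\bra{a(n+m)+b}=f\bra{an+(am+b)}=f(an)f(am+b),\qquad f(an+b)=f(an)f(b),
\]
the first splitting off the low part $am+b<q^{l+s}$ against a high part divisible by $q^{(l+s)+r}$, the second splitting off $b<q^s$. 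Eliminating $f(an)$ and using commutativity yields $g(n+m)=g(n)g(m)$, and $g(0)=\mathrm{id}_G$, so $g$ is $q$-quasimultiplicative with gap $\le r'=r+O(\log(ab))$. Since $\tilde f=f(b)\cdot g$ differs from $g$ only by the fixed constant $f(b)$, the claim for $\tilde f$ follows; the constant is harmless and can be absorbed into the empty-set coefficient in Lemma \ref{lem:multi}(\ref{item:multi2:2}).

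For item (iii), write $q=t^{\kappa}$ and $p=t^{\lambda}$ for a common base $t\ge2$ and integers $\kappa,\lambda\ge1$; it then suffices to prove $\SM{q}{}(G)=\SM{t}{}(G)$ and $\QM{q}{}(G)=\QM{t}{}(G)$ (and likewise with $t^{\lambda}$) and to combine these. The dictionary is that a base-$t$ position $i$ sits in base-$q$ block $\floor{i/\kappa}$, so one base-$q$ digit is a block of $\kappa$ base-$t$ digits. For the quasimultiplicative classes both inclusions fall straight out of the two-region relation \eqref{eq:96:21}: translating the base-$q$ hypotheses $M<q^L$, $q^{L+R}\mid N$ to base $t$ gives $M<t^{\kappa L}$, $t^{\kappa(L+R)}\mid N$, so the base-$t$ relation with $\ell=\kappa L$ applies as soon as $\kappa R\ge r$, giving $\QM{t}{r}(G)\subseteq\QM{q}{\ceil{r/\kappa}}(G)$; the reverse inclusion is the same computation with $t$ and $q$ exchanged. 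The semimultiplicative refinement $\SM{q}{R}(G)\subseteq\SM{t}{\kappa(R+1)-1}(G)$ is equally direct, since by Lemma \ref{lem:multi}(\ref{item:multi2:3}) a base-$q$ window $[j,j+R]$ is the base-$t$ window $[\kappa j,\kappa(j+R+1)-1]$.

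The one genuinely non-trivial step, and the part I expect to be the main obstacle, is the semimultiplicative coarsening $\SM{t}{r}(G)\subseteq\SM{q}{\ceil{r/\kappa}}(G)$: here the middle region of the base-$q$ relation \eqref{eq:96:01} has base-$t$ width up to $\kappa\ceil{r/\kappa}\ge r$, which is too wide for a single application of the base-$t$ relation, so some telescoping is unavoidable. I would route around this through the coefficient form of Lemma \ref{lem:multi}(\ref{item:multi2:3}): writing $f(n)=\prod_{i\ge0}g(n|_{[i,i+r]})$ in base $t$, every window $[i,i+r]$ whose starting position $i$ lies in a fixed base-$q$ block $j$ is contained in the base-$q$ blocks $[j,j+R]$ with $R=\ceil{r/\kappa}$, so grouping the factors by the block of their starting position rewrites $f$ as a product of base-$q$ window-factors of width $R+1$. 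One then checks \eqref{eq:96:01} directly from this product: under $K<q^L$, $q^L\mid M$, $M<q^{L+R}$, $q^{L+R}\mid N$ the digit-ranges of $K$, $M$, $N$ are disjoint with middle range of width $R$, so no width-$(R+1)$ window meets both the $K$-range and the $N$-range; splitting the product accordingly, each window-factor that lies in, or straddles a single boundary of, the middle range agrees on the two sides of \eqref{eq:96:01}, which is exactly the required identity. Taking unions over the gap gives $\SM{q}{}(G)=\SM{t}{}(G)$, and combining with the same statement for $p=t^{\lambda}$ finishes item (iii).
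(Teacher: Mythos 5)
The paper records this lemma without proof, so there is no argument of the author's to compare yours against; judged on its own, your proposal is essentially correct. The step you single out as the main obstacle --- the coarsening $\SM{t}{r}(G)\subseteq\SM{q}{\ceil{r/\kappa}}(G)$, where the middle region of \eqref{eq:96:01} is too wide for a single application of the base-$t$ relation --- is indeed handled correctly by your window-by-window check: with $R=\ceil{r/\kappa}$, no base-$t$ window of length $r+1$ can meet both the $K$-range $[0,\kappa L)$ and the $N$-range $[\kappa(L+R),\infty)$, so each window omits at least one of $K$, $N$ and contributes equal factors to the two sides of \eqref{eq:96:01}. Two small remarks on that step: the grouped coefficients $\Lambda_j(m)=\prod_{\kappa j\le i<\kappa(j+1)}g\bigl(m|_{[i,i+r]}\bigr)$ genuinely depend on $j$, so the grouping does not by itself produce a representation with a single coefficient function as in Lemma \ref{lem:multi}\eqref{item:multi2:3}; this is harmless because you then verify the relation \eqref{eq:96:01} directly (after which Lemma \ref{lem:multi} supplies such a representation), but it also means the grouping is dispensable --- the same two-case check applies verbatim to the ungrouped product over all base-$t$ windows.

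The one actual inaccuracy is at the end of item (\textit{ii}). Your argument correctly shows that $g(n)=f(b)^{-1}f(an+b)$ lies in $\QM{q}{r+O(\log(ab))}(G)$, but the constant $f(b)$ cannot be ``absorbed into the empty-set coefficient'': Lemma \ref{lem:multi}\eqref{item:multi2:2} requires $\llambda(0)=\mathrm{id}_G$, and any $h$ satisfying \eqref{eq:96:21} has $h(0)=h(0)^2$ (take $n=m=0$), so $\tilde f(0)=f(b)$ must equal $\mathrm{id}_G$ for $\tilde f$ itself to be $q$-quasimultiplicative. This is best read as an imprecision in the statement of the lemma rather than a gap in your proof: the correct (and, for the paper's purposes, sufficient) conclusion is that $\tilde f$ is a constant multiple of a sequence in $\QM{q}{r+O(\log(ab))}(G)$, and in the only instance the paper actually uses, namely $\tilde f(n)=f(pn)$, one has $b=0$ and $f(b)=\mathrm{id}_G$, so the issue disappears.
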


A crucial difference between $q$-semimultiplicative sequences and $q$-\hskip0pt quasimultiplicative sequences is that the local behaviour determines the global behaviour for the former but not for the latter. As we have already seen in Lemma \ref{lem:multi}, a $q$-semimultiplicative sequence is uniquely defined by its values on $n \geq 0$ such that $\supp(n)$ is contained in an interval of length $\leq r+1$. The following lemma gives a convenient way to reconstruct a $q$-semimultiplicative sequence from local data. 
\begin{lemma}\label{lem:semimulti-and-lambda}
	Let $G$ be an abelian group, and let $I_1,I_2,I_3 \subset \NN_0$ be unions of intervals of length $\geq r$ such that $\NN_0 = I_1 \cup I_2 \cup I_3$ where the union is disjoint. Then for any $f \in \SM{q}{r}(G)$ and any $n \geq 0$ it holds that
	\begin{equation}\label{eq:17:00}
		f(n) = f(n|_{I_1 \cup I_2})f(n|_{I_2 \cup I_3})f(n|_{I_3 \cup I_1}) f(n|_{I_1})^{-1}f(n|_{I_2})^{-1}f(n|_{I_3})^{-1}
	\end{equation}
\end{lemma}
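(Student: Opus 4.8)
The plan is to reduce the identity to a \emph{window-by-window} comparison, using the coefficient representation of Lemma~\ref{lem:multi}\eqref{item:multi2:3}. First I would fix coefficients $\llambda(\cdot)$ with $\llambda(0) = \mathrm{id}_G$ so that $f(n) = \prod_{i \geq 0} \llambda(n|_{[i,i+r]})$. Since restricting to a set and then to a window commute, for every $S \subseteq \NN_0$ one has $f(n|_{S}) = \prod_{i \geq 0} \llambda\bigl(n|_{S \cap [i,i+r]}\bigr)$, and because $n$ has only finitely many nonzero digits each such product is a \emph{finite} product in the abelian group $G$. Substituting these expansions for all six factors on the right-hand side of \eqref{eq:17:00} and collecting the terms indexed by a common window $W = [i,i+r]$ — legitimate as $G$ is abelian — it suffices to establish, for each $i \geq 0$, the single-window identity
\[
	\llambda(n|_{W}) = \llambda\bigl(n|_{(I_1 \cup I_2) \cap W}\bigr)\llambda\bigl(n|_{(I_2 \cup I_3) \cap W}\bigr)\llambda\bigl(n|_{(I_3 \cup I_1) \cap W}\bigr)\llambda\bigl(n|_{I_1 \cap W}\bigr)^{-1}\llambda\bigl(n|_{I_2 \cap W}\bigr)^{-1}\llambda\bigl(n|_{I_3 \cap W}\bigr)^{-1}.
\]

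The crux — and the one step that genuinely uses the hypothesis that the $I_j$ are unions of intervals of length $\geq r$ — is the combinatorial claim that each window $W = [i,i+r]$, being a block of $r+1$ consecutive integers, meets \emph{at most two} of the sets $I_1, I_2, I_3$. I would prove this by first observing that, since the $I_j$ partition $\NN_0$ and each is a union of intervals of length $\geq r$, every maximal block of consecutive integers lying in a single $I_j$ has at least $r$ elements (the left endpoint of such a block must begin one of the constituent intervals, which already supplies $r$ consecutive members of that $I_j$). If $W$ met all three sets, then scanning $W$ from left to right the ``colour'' $j$ with $n \in I_j$ would change at least twice, so $W$ would contain a maximal monochromatic block flanked on both sides, \emph{within $W$}, by integers of other colours; this forces $\lvert W\rvert \geq r + 2 > r+1$, a contradiction.

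Granting this, the single-window identity is forced by algebra. Fix $W$ and assume without loss of generality that $W \cap I_3 = \emptyset$, so that $W \subseteq I_1 \cup I_2$ and $n|_{I_3 \cap W} = 0$. Then $(I_2 \cup I_3)\cap W = I_2 \cap W$, $(I_3 \cup I_1)\cap W = I_1 \cap W$, and $(I_1 \cup I_2)\cap W = W$; substituting these and using $\llambda(0) = \mathrm{id}_G$ together with commutativity, the factors $\llambda(n|_{I_1 \cap W})$, $\llambda(n|_{I_2 \cap W})$ cancel against their inverses and the right-hand side collapses to $\llambda(n|_{W})$, which is exactly the contribution of $W$ to $f(n)$. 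Taking the product over all $i \geq 0$ then yields \eqref{eq:17:00}. I expect the only delicate point to be the combinatorial claim; once the windows are known to meet at most two of the $I_j$, everything else is routine bookkeeping, the main pitfalls being to confirm that restriction to $S$ followed by windowing equals restriction to $S \cap [i,i+r]$ and that all the products involved are finite so that the rearrangement in an abelian $G$ is valid.
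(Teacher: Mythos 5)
Your proof is correct and follows essentially the same route as the paper's: expand all seven terms via the coefficient representation of Lemma~\ref{lem:multi} and compare the contributions of each window $[i,i+r]$ on the two sides of \eqref{eq:17:00}, which the paper dispatches as ``direct inspection.'' You usefully make explicit the one non-trivial point hiding in that inspection --- that a window of $r+1$ consecutive integers can meet at most two of the sets $I_1, I_2, I_3$ because any maximal monochromatic block has at least $r$ elements --- and your verification of the resulting single-window identity is accurate.
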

\begin{proof}
	It is enough to check that each of the coefficients $\llambda(m)$ (as in Lemma \ref{lem:multi}) appears with the same multiplicity on both sides of \eqref{eq:17:00}. This follows by direct inspection.
\end{proof}

Depending on the situation, it is natural to work with $q$-additive sequences taking values either in $\RR$ or in $\RR/\ZZ$. Clearly, if $\varphi \colon \NN_0 \to \RR$ is $q$-additive (or semi-, or quasimultiplicative), then so is $\varphi \bmod{1} \colon \NN_0 \to \RR/\ZZ$. The following lemma provides a converse to this statement. As a direct consequence, we have $\QM{q}{r}(\UU) = \set{n \mapsto e(\varphi(n))}{\varphi \in \QA{q}{r}(\RR) }$ (and likewise for $\SM{q}{r}(\UU)$).

\begin{lemma}\label{lem:lift-T-to-R}
	Suppose that $\varphi \in \QA{q}{r}(\RR/\ZZ)$ (resp.\ $\SA{q}{r}(\RR/\ZZ)$). Then there exists a $\tilde \varphi \in \QA{q}{r}(\RR)$ (resp.\ $\SA{q}{r}(\RR)$) such that $\varphi = \tilde \varphi \bmod{1}$. 
\end{lemma}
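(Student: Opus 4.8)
The plan is to avoid lifting the values $\varphi(n)$ directly --- which would a priori meet a cohomological obstruction --- and instead to lift the coefficients appearing in the expansion furnished by Lemma~\ref{lem:multi}. Written additively for the group $G=\RR/\ZZ$, the assumption $\varphi\in\SA{q}{r}(\RR/\ZZ)$ (resp.\ $\QA{q}{r}(\RR/\ZZ)$) means, by part \eqref{item:multi2:3} (resp.\ \eqref{item:multi2:2}) of Lemma~\ref{lem:multi}, that there are coefficients $g\colon\NN_0\to\RR/\ZZ$ with $g(0)=0$ such that
\[
\varphi(n)=\sum_{i=0}^\infty g\bra{n|_{[i,i+r]}}
\qquad\text{resp.}\qquad
\varphi(n)=\sum_{\operatorname{gap}(I)\le r} g\bra{n|_I}.
\]
For each fixed $n$ these sums are finite, since $n|_I=0$, and hence $g(n|_I)=0$, for all but finitely many of the relevant index sets.

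First I would fix a section $s\colon\RR/\ZZ\to\RR$ of the quotient map $\RR\to\RR/\ZZ$ with $s(0)=0$ (for instance the representative in $[0,1)$) and set $\tilde g=s\circ g$, so that $\tilde g\colon\NN_0\to\RR$ satisfies $\tilde g(0)=0$ and $\tilde g(m)\equiv g(m)\pmod 1$ for every $m$. I would then define $\tilde\varphi$ by the same expansion with $\tilde g$ in place of $g$, that is $\tilde\varphi(n)=\sum_{i}\tilde g(n|_{[i,i+r]})$ in the semiadditive case and $\tilde\varphi(n)=\sum_{\operatorname{gap}(I)\le r}\tilde g(n|_I)$ in the quasiadditive case.

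Both conclusions are then immediate. Applying the ``if'' direction of Lemma~\ref{lem:multi} to the group $G=\RR$ and the coefficients $\tilde g$ (which satisfy $\tilde g(0)=0$) shows at once that $\tilde\varphi\in\SA{q}{r}(\RR)$ (resp.\ $\QA{q}{r}(\RR)$), since membership in these classes is guaranteed by the mere existence of such a coefficient expansion, regardless of which representatives are chosen. Moreover, reduction mod $1$ is a homomorphism $\RR\to\RR/\ZZ$, so applying it term by term to the finite defining sum yields $\tilde\varphi(n)\bmod 1=\sum_i\bra{\tilde g(n|_{[i,i+r]})\bmod 1}=\sum_i g(n|_{[i,i+r]})=\varphi(n)$, and likewise in the quasiadditive case; hence $\varphi=\tilde\varphi\bmod 1$. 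The only genuine point is the conceptual one of lifting coefficients rather than values, which lets Lemma~\ref{lem:multi} do all the work; I expect no serious obstacle, the only care needed being that each sum is finite (so that reduction mod $1$ commutes with summation) and that $s$ is normalised by $s(0)=0$ so as to preserve the condition $\tilde g(0)=0$ required to invoke the lemma.
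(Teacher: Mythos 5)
Your proof is correct and is essentially identical to the paper's: both lift the coefficients $\gamma(\cdot)$ from Lemma~\ref{lem:multi} to $\RR$ (normalised so the lift of $0$ is $0$) and define $\tilde\varphi$ by the resulting coefficient expansion. The paper's version is just terser, leaving the finiteness of the sums and the verification via the ``if'' direction of Lemma~\ref{lem:multi} implicit.
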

\begin{proof}
	Let $\gamma(\cdot)$ be the coefficients describing $\varphi$ as in Lemma \ref{lem:multi}. For any $m$ such that $\gamma(m)$ is defined, let $\tilde\gamma(m) \in \RR$ be such that $\gamma(m) = \tilde \gamma(m) \bmod{1}$. It is clear that the sequence $\tilde \varphi \colon \NN_0 \to \RR$ defined by the coefficients $\tilde \gamma(\cdot)$ satisfies all of the required conditions.
\end{proof}

\color{black} 

\makeatletter{}\section{Preliminaries}\label{sec:Lemmas}

In this section we record some elementary lemmas, which will be helpful in the course of the argument. We begin with the following simple result.

\begin{lemma}\label{lem:concentration}
	Let $\a_n \in \RR/\ZZ$ for $n \in [N]$ and suppose that 
	\begin{equation}\label{eq:10:00}
		\abs{ \EEE_{n<N} e(\a_n) } \geq 1-\e.
	\end{equation}
	for some $\e > 0$. 
	Then there exists $\b \in \RR/\ZZ$ such that for any $\delta > 0$,
	\begin{equation}\label{eq:10:00a}
	\fpa{\a_n - \b} \leq \delta \text{ for $({\e}/{8 \delta^2})$-almost all $n \in [N]$}.
	\end{equation}
	In particular, 
	\[
	\max_{n,m < N} \fpa{\alpha_n - \alpha_m} \leq \sqrt{N\e}.
	\]
\end{lemma}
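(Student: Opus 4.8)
The plan is to exploit the fact that the hypothesis $\abs{\EEE_{n<N} e(\a_n)} \geq 1-\e$ forces the points $e(\a_n)$ to cluster tightly around a single point of $\UU$, and to quantify this clustering through an $\ell^2$-bound on the $\a_n$ about their common centre. First I would normalise: let $\b \in \RR/\ZZ$ be chosen so that $2\pi\b$ is the argument of the mean $\EEE_{n<N} e(\a_n)$, so that $e(-\b)\,\EEE_{n<N} e(\a_n) = \abs{\EEE_{n<N} e(\a_n)}$ is real and nonnegative. Replacing $\a_n$ by $\a_n - \b$ and taking real parts of the hypothesis then gives $\EEE_{n<N}\cos(2\pi(\a_n-\b)) \geq 1-\e$, equivalently $\EEE_{n<N}\bra{1-\cos(2\pi(\a_n-\b))} \leq \e$.

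The crux is the elementary pointwise inequality $1-\cos(2\pi t) = 2\sin^2(\pi t) \geq 8\fpa{t}^2$, valid for all $t \in \RR/\ZZ$. This follows from the concavity bound $\sin(\pi u) \geq 2u$ for $u \in [0,1/2]$ (the graph of the concave function $u \mapsto \sin(\pi u)$ lies above the chord joining $(0,0)$ and $(1/2,1)$), applied with $u = \fpa{t}$, together with the identity $\abs{\sin(\pi t)} = \sin(\pi \fpa{t})$. Summing over $n$ and dividing by $8$ yields the key $\ell^2$-estimate $\EEE_{n<N}\fpa{\a_n-\b}^2 \leq \e/8$, that is, $\sum_{n<N}\fpa{\a_n-\b}^2 \leq N\e/8$.

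With this estimate in hand both conclusions follow at once. For the main assertion, Chebyshev's inequality bounds the number of $n \in [N]$ with $\fpa{\a_n - \b} > \delta$ by $\delta^{-2}\sum_{n<N}\fpa{\a_n-\b}^2 \leq N\e/(8\delta^2)$, which is precisely the statement that $\fpa{\a_n - \b} \leq \delta$ for $(\e/(8\delta^2))$-almost all $n \in [N]$. For the ``in particular'' part, since a single term never exceeds the full sum we have $\fpa{\a_n - \b} \leq \sqrt{N\e/8}$ for every $n$, and the triangle inequality $\fpa{\a_n - \a_m} \leq \fpa{\a_n - \b} + \fpa{\a_m - \b}$ then gives $\fpa{\a_n - \a_m} \leq 2\sqrt{N\e/8} = \sqrt{N\e/2} \leq \sqrt{N\e}$.

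The argument is wholly elementary and I anticipate no genuine obstacle; the only point requiring a little care is pinning down the sharp constant in $2\sin^2(\pi t) \geq 8\fpa{t}^2$, since it is exactly this factor of $8$ that produces the quantity $\e/(8\delta^2)$ appearing in the statement.
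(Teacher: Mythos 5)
Your proof is correct and follows essentially the same route as the paper: the same choice of $\b$ (making $e(-\b)\EEE_{n<N}e(\a_n)$ real and nonnegative), the same elementary inequality $1-\cos(2\pi t)\geq 8\fpa{t}^2$, and a Chebyshev/Markov step to conclude; your derivation of the ``in particular'' clause via the $\ell^2$ bound is a harmless variant of the paper's choice $\delta=\sqrt{N\e}/2$ and even gives the slightly sharper constant $\sqrt{N\e/2}$.
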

\begin{proof}
	Fix $\b \in \RR/\ZZ$ such that $e(-\b) \EE_{n<N} e(\a_n)$ is real, so \eqref{eq:10:00} can be written as
	\begin{equation}\label{eq:10:01}
		\EEE_{n<N} \braBig{1-\cos\bra{2\pi(\a_n-\b)}} \leq \e.
	\end{equation}
	Note that if $\fpa{\a_n - \b} \geq \delta$ for some $n < N$ then also $\cos(2\pi(\a_n-\b)) \leq 1 - 8\delta^2$. The bound \eqref{eq:10:00a} now follows from \eqref{eq:10:01} by Markov inequality. For the additional statement, put $\delta = \sqrt{N\e}/2$.
\end{proof}

We will need the following construction, which can be viewed as a quantitative variant of the Shadowing Lemma for the $\times q$ map on $\RR/\ZZ$. 
\begin{lemma}\label{lem:shadowing}
	Let $\alpha_i \in \RR/\ZZ$ and $\e_i > 0$ ($i \geq 0$) be sequences such that $\e_i$ is decreasing and $\fpa{\alpha_{i+1} - q \alpha_i} \leq \e_i$ for each $i \geq 0$. Then there exists $\beta \in \RR/\ZZ$ such that $\fpa{ q^i \beta - \alpha_i} \leq \e_i$ for each $i \geq 0$.
\end{lemma}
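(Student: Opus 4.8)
The plan is to construct $\beta$ as a limit of partial approximations, exploiting the fact that $\fpa{\alpha_{i+1}-q\alpha_i}\leq\e_i$ lets us choose lifts to $\RR$ that are Cauchy after dividing by $q^i$. Concretely, I would first lift everything to $\RR$: pick any real representative $a_0$ of $\alpha_0$, and then inductively choose real representatives $a_i$ of $\alpha_i$ so that the genuine real inequality $\abs{a_{i+1}-qa_i}\leq\e_i$ holds. This is possible because $\fpa{\alpha_{i+1}-q\alpha_i}\leq\e_i$ means $\alpha_{i+1}-q\alpha_i$ has a representative in $[-\e_i,\e_i]$; given $a_i$, I set $a_{i+1}=qa_i+\theta_i$ where $\theta_i\in[-\e_i,\e_i]$ is the chosen representative of $\alpha_{i+1}-q\alpha_i$, which automatically represents $\alpha_{i+1}$.

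Next I would define $\beta:=\lim_{i\to\infty}a_i/q^i$ and show the limit exists. From the recursion, $a_{i+1}/q^{i+1}-a_i/q^i=\theta_i/q^{i+1}$, and since $\abs{\theta_i}\leq\e_i\leq\e_0$ is bounded, the telescoping series $\sum_i\theta_i/q^{i+1}$ converges absolutely (it is dominated by $\e_0\sum_i q^{-(i+1)}<\infty$). Hence $b_i:=a_i/q^i$ is Cauchy and $\beta=\lim b_i$ exists as a real number; I then reduce it mod $1$ to get the desired element of $\RR/\ZZ$. The key estimate is then to control $\fpa{q^i\beta-\alpha_i}$, which equals $\fpa{q^i\beta-a_i}$ (since $a_i$ represents $\alpha_i$) and is bounded by $\abs{q^i\beta-a_i}=q^i\abs{\beta-b_i}$. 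Using $\beta-b_i=\sum_{j\geq i}\theta_j/q^{j+1}$ and the monotonicity $\e_j\leq\e_i$ for $j\geq i$, I would estimate
\[
q^i\abs{\beta-b_i}\leq q^i\sum_{j\geq i}\frac{\e_j}{q^{j+1}}\leq q^i\e_i\sum_{j\geq i}\frac{1}{q^{j+1}}=\e_i\cdot\frac{1}{q-1}\leq\e_i,
\]
since $q\geq2$ gives $1/(q-1)\leq 1$.

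The only subtlety, and the main point requiring care, is that the displayed bound gives exactly $\fpa{q^i\beta-\alpha_i}\leq\e_i$ only because the geometric factor $1/(q-1)$ is at most $1$; this is where the hypothesis $q\geq2$ enters cleanly. I would double-check that the monotonicity hypothesis on $\e_i$ is genuinely used (it is, in bounding $\e_j\leq\e_i$ for all $j\geq i$) and that no sharper constant is needed. There is no serious obstacle here: the whole argument is a standard contraction/shadowing construction, and I expect the write-up to be short, with the only thing to verify being that the chosen representatives $\theta_i$ make $a_i$ a genuine lift of $\alpha_i$ at every stage so that passing between $\RR$ and $\RR/\ZZ$ is legitimate.
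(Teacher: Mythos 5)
Your proposal is correct and is essentially the paper's own argument: both construct real lifts whose normalized versions form a Cauchy sequence (your $b_i=a_i/q^i$ is the paper's $\beta_i$), and both close with the same geometric-series estimate $\e_i\sum_{j\geq i}q^{i-j-1}=\e_i/(q-1)\leq\e_i$ using monotonicity of $\e_i$ and $q\geq 2$.
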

\begin{proof}
	Construct a sequence of points $\beta_i \in \RR$ as follows. Take $\beta_0 \in [-1/2,1/2)$ with $\beta_0 \bmod 1 = \alpha_0$. Once $\beta_i$ has been constructed, let $\beta_{i+1}$ be such that $q^{i+1}\beta_{i+1} \bmod 1 = \alpha_{i+1}$ and $\abs{\beta_i - \beta_{i+1}}$ is minimal. Then 
	\begin{equation}\label{eq:10:02a}
\abs{\beta_{i+1} - \beta_i} = \fpa{q^{i+1}\beta_{i+1} - q^{i+1}\beta_i}/q^{i+1} \leq \e_{i}/q^{i+1}
	\end{equation}	
so $\beta_{i}$ converges to some $\beta \in \RR$. Moreover, 
	\begin{equation}\label{eq:10:02}
	\fpa{q^i \b - \a_i} \leq q^i \fpa{\b - \b_i} \leq \e_i(q^{-1} + q^{-2} + \dots) \leq \e_i. \qedhere
	\end{equation}	
\end{proof}

For completeness, we prove that $q$-semimultiplicative sequences are deterministic. By contrast, it is not hard to see that a general $q$-quasimultiplicative sequence need not be deterministic. Note that all norms on $\CC^d$ are equivalent, so in the definition of deterministic sequence it does not matter whether we use balls or any other convex shapes.

\begin{lemma}
	Let $f \in \SMD{q}{r}$. Then for any $\e > 0$ and $\tempL \geq 0$, all factors of $f$  length $d$ are covered by $O(q^{2r}d/\e^2)$ boxes in $\CC^{\tempL}$ of sidelength $\e$. In particular, $f$ is deterministic.
\end{lemma}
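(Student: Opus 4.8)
The plan is to use the local-to-global structure of $q$-semimultiplicative sequences to show that a factor $\bra{f(n+j)}_{j=0}^{d-1}$ depends, up to finitely many discrete choices and two continuous unit parameters, only on the low-order digits of $n$. Fix $\e > 0$ and $d \geq 1$, and choose $L = \ceil{\log_q d}$, so that $d \leq q^L$. For each $m = n+j$ with $0 \leq j < d$, decompose $m = m|_{[L+r,\infty)} + m|_{[L,L+r)} + m|_{[0,L)}$ into its high, buffer, and low parts; since the buffer occupies exactly the $r$ positions $[L,L+r)$, the defining relation \eqref{eq:96:01} of $q$-semimultiplicativity (applied with threshold $q^L$, noting $f \in \SMD{q}{r}$ satisfies it with this $r$) gives the exact factorisation
\[
	f(n+j) = H_j B_j, \quad\text{where}\quad H_j = f\bra{(n+j)|_{[L,\infty)}} f\bra{(n+j)|_{[L,L+r)}}^{-1}, \ \ B_j = f\bra{(n+j) \bmod q^{L+r}}.
\]

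The first key step is to observe that the high factor $H_j$ takes at most two values as $j$ ranges over $[0,d)$. Writing $n = n_{\mathrm{hi}}q^L + (n \bmod q^L)$ with $n \bmod q^L < q^L$ and using $j < d \leq q^L$, the carry $c_j := \floor{\bra{(n \bmod q^L) + j}/q^L}$ lies in $\{0,1\}$, and both $(n+j)|_{[L,\infty)} = (n_{\mathrm{hi}}+c_j)q^L$ and $(n+j)|_{[L,L+r)} = \bra{(n_{\mathrm{hi}}+c_j) \bmod q^r}q^L$ are determined by $c_j$. Hence $H_j \in \{H^{(0)}, H^{(1)}\}$ for two units $H^{(0)}, H^{(1)} \in \UU$ depending only on $n$. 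The second key step is that both the residue $(n+j) \bmod q^{L+r} = (n'' + j)\bmod q^{L+r}$ governing $B_j$ and the carry pattern $(c_j)_j$ selecting between $H^{(0)}$ and $H^{(1)}$ depend only on $n'' := n \bmod q^{L+r}$, of which there are $q^{L+r}$ values.

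It remains to count boxes. Fix a residue $n''$; then the whole factor is the explicit map $(H^{(0)}, H^{(1)}) \mapsto \bra{H^{(c_j)} B_j}_{j=0}^{d-1}$, and as $n$ ranges over the class $n'' \bmod q^{L+r}$ the pair $(H^{(0)}, H^{(1)})$ ranges over a subset of $\UU^2$. Since $\abs{B_j} = 1$, moving each of $H^{(0)}, H^{(1)}$ inside an arc of diameter $\leq \e/2$ perturbs every coordinate by at most $\e/2$; covering $\UU$ by $O(1/\e)$ such arcs therefore covers all factors with this $n''$ by $O(1/\e^2)$ boxes of sidelength $\e$. Summing over the $q^{L+r} \leq q^{r+1}d$ residues yields a cover of all factors of length $d$ by $O(q^{r+1}d/\e^2) = O(q^{2r}d/\e^2)$ boxes (all constants may depend on $q$). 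For fixed $\e$ this count is linear in $d$, hence $\exp(o_\e(d))$, so $f$ is deterministic.

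I expect the main obstacle to be the bookkeeping around carries. The whole argument hinges on the $r$-digit buffer $[L,L+r)$ absorbing, through relation \eqref{eq:96:01}, the interaction between the low and high parts of $n+j$, so that the only coupling between the high digits of $n$ and the index $j$ is the single carry bit $c_j$; choosing $q^L \geq d$ so that at most one carry crosses position $L$, and reading off that $f$ on the high-and-buffer part depends on $j$ only through $c_j$, is the crux. Once this is in place the covering count is routine.
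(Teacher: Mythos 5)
Your proof is correct. The core decomposition is the same as the paper's: you apply the semimultiplicative relation at level $L$ with $q^L\geq d$ to write $f(n+j)$ as a unit determined by the digits above position $L$ times a factor determined by $(n+j)\bmod q^{L+r}$. Where you diverge is in the global bookkeeping. The paper first treats ``good'' (aligned) factors of length $q^l$, which require only \emph{one} continuous parameter $z$ and $q^r$ discrete choices, hence $O(q^r/\e)$ boxes; it then covers an arbitrary factor as a subword of the concatenation of two consecutive good factors, paying $O(q^r/\e)^2$ for the two blocks and a factor of $q^l\sim d$ for the offset. You instead handle arbitrary positions in one step by observing that at most one carry crosses position $L$, so the high part contributes only the two units $H^{(0)},H^{(1)}$; the offset cost is absorbed into the $q^{L+r}$ choices of $n\bmod q^{L+r}$. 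The two routes trade the same factor of $d$ in different places and both land on the stated bound (yours is marginally sharper, $O(q^{r}d/\e^2)$ after absorbing $q$-dependent constants, versus the paper's $O(q^{2r}d/\e^2)$). Your carry analysis is sound: $c_j\in\{0,1\}$ since $j<d\leq q^L$, both $(n+j)|_{[L,\infty)}$ and $(n+j)|_{[L,L+r)}$ are functions of $c_j$ alone once $n$ is fixed, and the carry pattern and the low residues are determined by $n\bmod q^{L+r}$, so each residue class genuinely yields a two-parameter family covered by $O(1/\e^2)$ boxes of sidelength $\e$.
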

\begin{proof}
	Let us say that a factor of length $d$ is \emph{good} if it appears in $f$ at a position divisible by $d$. We first show that for any $l \geq 0$, the good factors of $f$ of length $q^l$ can be covered by $O(q^r/\e)$ boxes of sidelenght $\e$. Recall that for any $n,m$ with $q^l|n$ and $m < q^l$ we have
\[
 f(n+m) = \frac{f(n)}{f(n|_{[l,l+r)})} f(n|_{[l,l+r)} + m).
\]
Hence, any good factor of $f$ of length $q^l$ is contained in one of the $O(q^r/\e)$ boxes of sidelength $\e$ centred around the points $\bra{z f(aq^r + j)}_{j=0}^{q^l-1} \in \CC^{q^l}$, where $0 \leq a < q^r$ and $z$ ranges over some $\e$-dense subset of $\UU$. 

Secondly, note that any factor of $f$ of length $d$ with $d \leq q^l$ can be obtained as a factor of the concatenation of two good factors of $f$ of length $q^l$. Choosing the two good factors independently and choosing any factor of their concatenation of length $d$, we see that all factors of $f$ of length $d$ can be covered by $O(q^{2r}d/\e^2)$ boxes of sidelength $\e$.
\end{proof} 

\makeatletter{}\section{Convergence criterion}\label{sec:Convergence}
In this section we obtain a criterion for Ces\`{a}ro convergence of $q$-\hskip0pt quasimultiplicative sequences to $0$. The problem of Ces\`{a}ro convergence of $q$-multiplicative sequences was studied by Delange \cite{Delange-1972}, who obtained an analogue of Erd\H{o}s--Wintner theorem. In particular, the following is a direct consequence of results contained in \cite{Delange-1972}.

\begin{proposition}
	Let $f \in \MD{q}$. Then the following conditions are equivalent:
	\begin{enumerate}
	\item\label{cond:59:A}
	there exists $k \geq 0$ such that 
	$\displaystyle \limsup_{N \to \infty} \abs{\EEE_{n < N} f(nq^k)} > 0$;
	\item\label{cond:59:B} 
$\displaystyle{\sum_{l = 0}^{\infty} \sum_{a =0}^{q-1} \Re\bra{1 - f(aq^l)} < \infty}$.
	\end{enumerate}
\end{proposition}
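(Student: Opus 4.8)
The plan is to prove the equivalence of the two conditions directly, exploiting the product structure of $q$-multiplicative sequences and mirroring the style of the $q$-multiplicative case of Theorem~\ref{thm:B} treated above. By Lemma~\ref{lem:multi}\eqref{item:multi2:1}, a function $f \in \MD{q}$ factors as $f(n) = \prod_{i=0}^\infty \llambda(n|_{\{i\}})$, where each digit at position $i$ contributes an independent factor $\llambda(aq^i) = f(aq^i) \in \UU$. The decisive structural observation is that, since the base-$q$ digits of a uniformly random $n \in [q^L]$ are independent and uniform on $\Sigma_q$, the average $\EE_{n < q^L} f(n)$ factors as a product over digit positions:
\begin{equation}\label{eq:59:factor}
	\EEE_{n < q^L} f(n) = \prod_{l=0}^{L-1} \braBig{ \EEE_{a < q} f(aq^l) } = \prod_{l=0}^{L-1} \braBig{ \frac{1}{q}\sum_{a=0}^{q-1} f(aq^l) }.
\end{equation}
I would first reduce the general Ces\`{a}ro average along $N$ to averages along $N = q^L$ (using a dilation by $q^k$ to absorb the $q^k$ in condition~\eqref{cond:59:A}, together with the robustness observation that $\EE_{n<N}$ for $q^{L-1} \leq N < q^L$ differs from $\EE_{n<q^L}$ by a controlled amount, as in Remark~\ref{remark:AP-robust}\eqref{item:64-A}). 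This turns the question into whether the infinite product of the quantities $c_l := \EE_{a<q} f(aq^l)$ stays bounded away from $0$.

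\textbf{Key steps.} The core of the argument is then an elementary analysis of the convergence of $\prod_{l=0}^\infty c_l$, where each $c_l = \frac1q \sum_{a<q} f(aq^l)$ is a mean of $q$ unit-modulus complex numbers, hence $\abs{c_l} \leq 1$. The link to condition~\eqref{cond:59:B} comes from the pointwise bound $\abs{c_l} \leq 1 - c\sum_{a<q}\Re(1 - f(aq^l))$ for a suitable constant $c > 0$ (valid once the summand is small; this is the standard inequality relating $1 - \abs{\text{average of phases}}$ to the variance of those phases, and is exactly the kind of elementary estimate used in the proof of Theorem~\ref{thm:B} above). Consequently $\prod_l \abs{c_l}$ is bounded away from $0$ if and only if $\sum_{l} \sum_{a<q} \Re(1 - f(aq^l)) < \infty$, which is precisely \eqref{cond:59:B}. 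For the implication \eqref{cond:59:B}$\Rightarrow$\eqref{cond:59:A} one gets directly that $\abs{\EE_{n<q^L} f(n)} = \prod_{l<L}\abs{c_l}$ converges to a positive limit. For the converse \eqref{cond:59:A}$\Rightarrow$\eqref{cond:59:B}, divergence of $\sum_l \sum_a \Re(1 - f(aq^l))$ forces $\prod_{l<L}\abs{c_l} \to 0$, so $\EE_{n<q^Lq^k} f(nq^k) \to 0$ for every fixed $k$; I would then argue that this forces the general limsup in \eqref{cond:59:A} to vanish as well, handling the intermediate $N$ via the reduction above.

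\textbf{Main obstacle.} The genuinely substantive point is verifying the two-sided comparison between $\prod_l \abs{c_l}$ and the series $\sum_l \sum_a \Re(1 - f(aq^l))$, and in particular the delicate direction \eqref{cond:59:A}$\Rightarrow$\eqref{cond:59:B}: one must rule out the possibility that the phases $\arg f(aq^l)$ exhibit a nonzero ``drift'' that is absorbed by choosing $\b$ optimally in a concentration argument. Concretely, $c_l$ can fail to tend to $1$ in modulus even when the individual $f(aq^l)$ are close to a common value other than $1$ only if that common value creates a genuine obstruction; the summand $\Re(1 - f(aq^l))$ is measuring exactly the squared distance $\fpa{\arg f(aq^l)}^2$ up to constants, so one must confirm no cancellation is lost when passing between the product and the sum. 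This is where I expect Lemma~\ref{lem:concentration} (or the variance/Chebyshev reasoning sketched in the proof of Theorem~\ref{thm:B}) to do the real work, converting the hypothesis $\abs{c_l} \geq 1 - \e_l$ into the pointwise bounds on $\Re(1 - f(aq^l))$ needed to control the series. The remaining reductions between $\EE_{n<N}$ and $\EE_{n<q^L}$, and the handling of the dilation parameter $k$, are routine.
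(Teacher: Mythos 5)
Your proposal is correct, but note that the paper does not actually prove this proposition: it is stated as ``a direct consequence of results contained in \cite{Delange-1972}'' and left as a citation, so your self-contained argument is necessarily a different route. The route is the right one, and it is essentially Delange's: the identity $\EE_{n<q^L} f(n) = \prod_{l<L} c_l$ with $c_l = \EE_{a<q} f(aq^l)$ reduces everything to the convergence of an infinite product of sub-unit complex numbers. Two points in your sketch deserve to be firmed up. First, the ``main obstacle'' you flag (a phase drift absorbed by an optimal $\b$) is in fact vacuous here, and for a cleaner reason than Lemma \ref{lem:concentration}: since $f$ is $q$-multiplicative, $f(0)=1$ is one of the $q$ terms averaged in $c_l$, and expanding $1-\abs{c_l}^2 = q^{-2}\sum_{a,b}\Re\bra{1-f(aq^l)\bar f(bq^l)}$ and isolating the $a=0$ row gives $1-\abs{c_l} \gg_q \sum_{a<q}\Re\bra{1-f(aq^l)}$, while the triangle-type bound $\Re\bra{1-z\bar w}\ll \Re(1-z)+\Re(1-w)$ gives the reverse inequality $1-\abs{c_l}\ll_q \sum_{a<q}\Re\bra{1-f(aq^l)}$; both hold unconditionally, not just when the summand is small, so no concentration argument is needed and the absence of a twist $e(-\gamma a q^l)$ in condition (\ref{cond:59:B}) (in contrast to Theorem \ref{thm:IK}) is explained. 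Second, be slightly more careful with the role of $k$: if the series converges, finitely many $c_l$ may still vanish or be small, so $\prod_{l<L}\abs{c_l}$ need not have a positive limit; the dilation by $q^k$ in (\ref{cond:59:A}) is there precisely to discard those finitely many positions, after which $\prod_{l\geq k}\abs{c_l}$ is bounded below. The remaining reduction from general $N$ to $N=q^L$ in the direction (\ref{cond:59:A})$\Rightarrow$(\ref{cond:59:B}) should be done by splitting $[N]$ into the standard base-$q$ blocks $[mq^j,(m+1)q^j)$ and using $\abs{\sum_{n<N} f(n)}\leq \sum_j d_j q^j \prod_{l<j}\abs{c_l}$ (the paper performs the analogous reduction in the proof of Proposition \ref{prop:convergence-criterion}); Remark \ref{remark:AP-robust}, which concerns almost-everywhere approximation, is not quite the right tool to cite for this step. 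With these adjustments your argument is complete.
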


Analogous conditions appear in \cite{IndlekoferKatai-2001-AMH} and \cite{IndlekoferKatai-2002} in the context of Ces\`{a}ro convergence of dilated products of $q$-multiplicative sequences, recall Theorem \ref{thm:IK}.

We will need to have quantitative control over how far a given sequence is from the constant sequence $1$ when evaluated on integers with non-zero entries at specified places. The quantity given by the following definition turns out to be a convenient way of recording this information.

\begin{definition}
	Let $f \colon \NN_0 \to \UU$ and let $I \subset \NN_0$ be a finite set. Then $\la{f}{I} \in \RR_{\geq 0} \cup \{+\infty\}$ denotes the unique number such that
	\begin{equation}\label{eq:def-epsilon}
		\abs{ \EEE_{ \sup(n) \subset I} f(n) } = \exp\bra{-\la{f}{I}},
	\end{equation}
	where the average is taken over all $n \geq 0$ whose base-$q$ expansions contain non-zero digits only on positions in $I$. (By convention, $\la{f}{\emptyset} = 1$.) Additionally, $\lb{f}{I} = \min(\la{f}{I},1)$.
\end{definition}
For comparison with other results, note that for $f \colon \NN_0 \to \UU$ and $l \geq 0$ we have
\[
	\lb{f}{\{l\}} \sim \inf_{\beta} \EEE_{a < q} \Re\bra{ 1 - f(aq^l)e(-\beta) }.
\]
We will almost exclusively limit our attention to the case when $I = [K,L)$ is an interval, in which case the average in \eqref{eq:def-epsilon} runs over all $n$ with $q^K \mid n$ and $n < q^L$. For $f \in \QMD{q}{}$, one can construe $\la{f}{I}$ as the contribution to cancellation of Ces\`{a}ro averages of $f$ corresponding to the indices in $I$. This point of view is justified by the following observation. 

\begin{lemma}\label{lem:global>local}
	Let $f \in \QMD{q}{r}$ and let $(I_i)_{i=1}^s$ be a sequence of disjoint intervals contained in an interval $I \subset \NN_0$. Then 
\begin{equation}\label{eq:54:11a}
	\sum_{i=1}^s \lb{f}{I_i} \ll_r \la{f}{I}.
\end{equation}	
In particular, if there exists a sequence $(I_i)_{i=1}^\infty$ of disjoint intervals such that the sum $\sum_{i=1}^\infty \lb{f}{I_i}$ diverges then $\displaystyle \EEE_{n<q^L} f(n) \to 0$ as $L \to \infty$.
\end{lemma}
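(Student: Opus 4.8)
The plan is to bound the local quantities $\lb{f}{I_i}$ in terms of the global quantity $\la{f}{I}$ by relating both to the size of Ces\`{a}ro averages and exploiting the quasimultiplicative product structure. First I would unpack the definitions: the average $\EE_{\supp(n) \subset I} f(n)$ runs over the $q^{|I|}$ integers supported on $I$, and because $f \in \QMD{q}{r}$, Lemma~\ref{lem:multi}\eqref{item:multi2:2} lets me write $f(n)$ as a product of coefficients $\llambda(n|_J)$ over sets $J$ with $\operatorname{gap}(J) \le r$. The key observation is that if the disjoint intervals $I_i$ are separated by gaps of length $> r$, then an integer $n$ supported on $\bigcup_i I_i$ decomposes as $n = \sum_i n|_{I_i}$, and the coefficients $\llambda(n|_J)$ factor cleanly since no relevant $J$ straddles a gap. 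This would give an exact factorisation $\EE_{\supp(n) \subset \bigcup I_i} f(n) = \prod_i \EE_{\supp(n) \subset I_i} f(n)$, hence $\sum_i \la{f}{I_i}$ equals the exponent of the average over $\bigcup I_i$.

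The next step is to pass from $\la{f}{I_i}$ to $\lb{f}{I_i} = \min(\la{f}{I_i}, 1)$ and to compare the average over $\bigcup_i I_i$ with the average over the full interval $I$. For the first point, note that $\min(x,1) \ll 1 - e^{-x}$ for $x \ge 0$, so each $\lb{f}{I_i}$ is controlled by $1 - |\EE_{\supp(n)\subset I_i} f(n)|$, and additivity gives $\sum_i \lb{f}{I_i} \ll -\log \prod_i |\EE_{\supp(n)\subset I_i} f(n)| = \la{f}{\bigcup I_i}$. The remaining task is to show $\la{f}{\bigcup I_i} \ll_r \la{f}{I}$, i.e.\ that restricting the support to a sparser set can only increase the magnitude of the average up to a bounded multiplicative loss. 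This should again follow from the product structure: writing $I \setminus \bigcup_i I_i$ as the complementary positions and factoring $f$ over $I$ into a part depending on $\bigcup_i I_i$ and a part on the complement, one relates $|\EE_{\supp(n)\subset I} f(n)|$ to $|\EE_{\supp(n) \subset \bigcup I_i} f(n)|$, with the gap parameter $r$ entering because coefficients on sets $J$ straddling the boundaries between $\bigcup_i I_i$ and its complement must be absorbed into the error.

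To handle the assumption that the $I_i$ need only be disjoint rather than $r$-separated, I would first thin out the family: among any collection of disjoint intervals one can extract a subfamily that is $(r+1)$-separated at the cost of discarding a bounded fraction, or alternatively group every block of $r+1$ consecutive indices together; since $r$ is fixed this only affects the implied constant, which is allowed to depend on $r$. The $\min(\cdot,1)$ truncation in the definition of $\lb{f}{\cdot}$ is precisely what makes this thinning harmless, since discarding $O(r)$ intervals out of each cluster loses at most $O(r)$ from the sum $\sum_i \lb{f}{I_i}$ per unit of the retained sum.

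The main obstacle I expect is the comparison $\la{f}{\bigcup I_i} \ll_r \la{f}{I}$ when $f$ is merely quasimultiplicative rather than multiplicative: the coefficients $\llambda(n|_J)$ attached to sets $J$ that cross the boundary between $\bigcup_i I_i$ and the discarded positions do not factor, so the average over $I$ is \emph{not} exactly a product of the average over $\bigcup_i I_i$ with an independent factor. I would control this by conditioning on the digits of $n$ lying within distance $r$ of the boundaries, treating them as a bounded number of ``free'' variables (at most $q^{O(r)}$ configurations), and applying the triangle inequality inside the conditional averages; each boundary configuration contributes a factorised average that is bounded in magnitude by $|\EE_{\supp(n)\subset \bigcup I_i} f(n)|$ times a constant, yielding the $\ll_r$ bound after summing over the $O_r(1)$ configurations. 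The final ``in particular'' clause is then immediate: divergence of $\sum_i \lb{f}{I_i}$ forces $\la{f}{[0,L)} \to \infty$, so $|\EE_{n < q^L} f(n)| = \exp(-\la{f}{[0,L)}) \to 0$.
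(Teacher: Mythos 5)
Your reduction hinges on the intermediate inequality $\la{f}{\bigcup_i I_i} \ll_r \la{f}{I}$, and this step is false: restricting the support to a sparser set can \emph{create} cancellation that is absent from the full average. Concretely, take $q=2$, $r=1$ and $f(n)=(-1)^{c(n)}$, where $c(n)$ is the number of indices $i$ with $n_{2i}=1$ and $n_{2i+1}=0$ in the binary expansion of $n$; checking the defining relation of Definition~\ref{def:multi}(\ref{item:multi:2}) digit pair by digit pair shows $f\in\QMD{2}{1}$. For $I=[0,2K)$ the pairs $(n_{2i},n_{2i+1})$ are independent and each contributes a factor $\tfrac14\bra{1+1-1+1}=\tfrac12$ to the average, so $\la{f}{I}=K\log 2$; but with $I_i=\{2i\}$ for $i<K$, every $n$ with $\supp(n)\subset\bigcup_i I_i$ has $n_{2i+1}=0$ for all $i$, hence $f(n)=(-1)^{s_2(n)}$ there, the average vanishes, and $\la{f}{\bigcup_i I_i}=+\infty$. (The lemma itself survives only because of the truncation: $\lb{f}{\{2i\}}=1$, so $\sum_{i<K}\lb{f}{I_i}=K\ll\la{f}{I}$.) Your proposed repair by conditioning on boundary digits does not close this gap: there are $q^{2rs}$ buffer configurations, not $O_r(1)$, and only the configuration in which \emph{all} buffer digits vanish yields a conditional average that factors through $\EE_{\supp(n)\subset\bigcup_i I_i}f(n)$ --- in the example above, conditioning on all odd digits equal to $1$ gives conditional average exactly $1$. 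The triangle inequality over configurations therefore produces a bound whose constant decays exponentially in $s$, which is useless for the ``in particular'' clause.

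The paper's proof avoids the quantity $\la{f}{\bigcup_i I_i}$ altogether. After partitioning the family into $O_r(1)$ well-separated subfamilies (a pigeonhole over residue classes, which is what really makes the separation reduction harmless --- not the truncation), it takes $\bb n$ uniform on $[q^L]$ and lets $\bb Z$ be the random set of indices $i$ whose buffer digits all vanish. These events are independent across $i$, each of probability $\rho=q^{-2r}$, and conditionally on $\bb Z$ one has $\abs{\EE\bbra{f(\bb n)\mid\bb Z}}\le\exp\bra{-\sum_{i\in\bb Z}\la{f}{I_i}}$ because the remaining digits split off as an independent factor of modulus at most $1$. Averaging over $\bb Z$ then gives $\abs{\EE f(\bb n)}\le\prod_{i}\bra{(1-\rho)+\rho e^{-\la{f}{I_i}}}\le\exp\bra{-\tfrac{\rho}{10}\sum_i\lb{f}{I_i}}$; note that the truncation $\min(\cdot,1)$ enters precisely here, via $(1-\rho)+\rho e^{-x}\le\exp\bra{-\rho\min(x,1)/10}$. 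This per-interval randomisation --- zeroing each buffer independently rather than all of them at once --- is the essential idea your argument is missing.
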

\begin{proof}
	Partitioning $(I_i)_{i=1}^s$ into $O_r(1)$ parts if necessary, we may assume that the intervals $I_i$ are separated by gaps of length $> 2r$ from each other and by gaps of length $> r$ zeros from the endpoints of $I$. Let $I_i = [K_i,L_i)$, $l_i = \abs{I_i} = K_i-L_i$, and put also $J_i = [K_i-r,K_i) \cup [L_i,L_i+r)$ for $1 \leq i \leq s$. We may also assume, shifting if necessary, that $I = [0,L)$.
	
	Consider a random variable $\bb n$, distributed uniformly on $[q^L]$. Let $\bb Z \subset \NN$ be the (random) set consisting of those $1 \leq i \leq s$ such that $\bb n|_{J_i} = 0$, or, in plainer terms, such that $(\bb n)_q$ has zeroes on all of the positions at distance between $1$ and $r$ from $I_i$. The events $i \in \bb Z$ are mutually independent and $\PP[i \in \bb Z] = 1/q^{2r} =: \rho$ for $1 \leq i \leq s$. 

	For each $1 \leq i \leq s$, put $\bb n_i = \bb n|_{I_i}$ if $i \in \bb Z$ and $\bb n_i = 0$ otherwise, and let $\bb m = \bb n - \sum_{i=1}^s \bb n_i$. This is set up so that  
\begin{equation}\label{eq:42:00a}
	f(\bb n) = f(\bb m) \prod_{i \in \bb Z} f(\bb n_i),
\end{equation} 
and conditional on the value of $\bb Z$, the variables $\bb m$ and $\bb n_i$ ($1 \leq i \leq s$) are mutually independent. Moreover, conditional on $\bb Z$, for each $1 \leq i \leq s$ the variable $\bb n_i$ is either uniformly distributed on $\set{ n \in \NN_0 }{\supp(n) \subset I_i}$ (which is the case if $i \in \bb Z$), or identically $0$ (if $i \not \in \bb Z$).  It follows that
\begin{equation}\label{eq:42:01a}
	\abs{ \EE \bbra{ f(\bb n) \mid \bb Z } } 
	\leq \prod_{i = 1}^s \abs{ \EE\bbra{ f(\bb n_i) \mid \bb Z} } = \exp\bra{ - \sum_{i \in \bb Z} \la{f}{I_i} }
\end{equation}
Using independence of the events $i \in \bb Z$ for $1 \leq i \leq s$, the triangle inequality and the basic inequality $1 -x \leq \exp(-x) \leq 1-x/10$ for $0 \leq x \leq 1$, we now obtain
\begin{equation}\label{eq:42:11a}
	\abs{ \EEE \bbra{ f(\bb n) } } 
	\leq \prod_{i = 1}^s \braBig{ (1-\rho) + \rho \exp(-  \la{f}{I_i} ) } \leq \exp \bra{ - \frac{\rho}{10} \sum_{i=1}^s \lb{f}{I_i}}.
\end{equation}
It follows that 
\begin{equation}\label{eq:42:12a}
	\la{f}{I} \geq \frac{\rho}{10} \sum_{i=1}^s \lb{f}{I_i},
\end{equation}
which is precisely \eqref{eq:54:11a}.

The additional part of the statement follows directly from how $\la{f}{[0,L)}$ is defined and \eqref{eq:54:11a}.
\end{proof}

We are now ready to state and prove the characterisation mentioned above, which is the main result of this section. Similar results for some special classes of $q$-quasimultiplicative sequences can be found in \cite{IndlekoferKatai-2001-AMH, IndlekoferKatai-2002}.

\begin{proposition}\label{prop:convergence-criterion}
	Let $f \in \QMD{q}{}$. Then the following conditions are equivalent:
	\begin{enumerate}
	\item\label{cond:54:A} 	there exists $k \geq 0$ such that 
	$\displaystyle \limsup_{N \to \infty} \abs{\EEE_{n < N} f(nq^k)} > 0$;
	\item\label{cond:54:B} if $(I_i)_{i=1}^\infty$ is a sequence of disjoint intervals then $\displaystyle \sum_{i=1}^\infty \lb{f}{I_i} < \infty$;
	\item\label{cond:54:C} the sequence $f$ is $q$-almost periodic.
	\end{enumerate}
\end{proposition}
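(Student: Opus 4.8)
The plan is to route everything through a \emph{tail reformulation} of condition~\ref{cond:54:B}: I claim \ref{cond:54:B} is equivalent to asserting that for every $\eta > 0$ there is a $K$ with $\la{f}{[K,L)} \le \eta$ for all $L \ge K$. One direction is immediate; for the other, if this failed for some $\eta$ then a greedy choice of successive intervals $[K_1,L_1), [K_2,L_2), \dots$, each with $\la{f}{\cdot} > \eta$ (hence $\lb{f}{\cdot} \ge \min(\eta,1)$), would produce disjoint intervals whose $\lb{f}{\cdot}$-sum diverges, contradicting \ref{cond:54:B}. I will use this together with the exact identity $\EE_{n < q^{L}} f(nq^k) = \EE_{\supp(m) \subset [k, L+k)} f(m)$, so that $\abs{\EE_{n<q^L} f(nq^k)} = \exp(-\la{f}{[k,L+k)})$.

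For \ref{cond:54:B}$\Rightarrow$\ref{cond:54:A}: fix $\eta \in (0,1)$ and let $K$ be as in the tail reformulation. Then for $k = K$ and every $L$ the identity gives $\abs{\EE_{n<q^L} f(nq^K)} = \exp(-\la{f}{[K,L+K)}) \ge e^{-\eta} > 0$, so the $\limsup$ along the subsequence $N = q^L$ is already positive. For the converse I argue contrapositively. If \ref{cond:54:B} fails, I fix disjoint intervals with divergent $\lb{f}{\cdot}$-sum; discarding finitely many I may assume they lie in $[k,\infty)$, and since dilation by $q^k$ preserves quasimultiplicativity with $\la{f(\cdot q^k)}{I} = \la{f}{I+k}$, Lemma~\ref{lem:global>local} yields $\EE_{n<q^L} f(nq^k) \to 0$ for each fixed $k$. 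Passing from powers of $q$ to arbitrary $N$ here is a routine base-$q$ block decomposition (using the gap property to factor blocks on a positive proportion of indices), which gives the negation of \ref{cond:54:A}.

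The substance is \ref{cond:54:B}$\Leftrightarrow$\ref{cond:54:C}, and the conceptual difficulty is that $\la{f}{[K,L)}$ records behaviour on the \emph{sparse} set of multiples of $q^K$, whereas $q$-almost periodicity is a statement about \emph{typical} $n$. The bridge is the quasimultiplicative gap trick: whenever $n$ has a run of $r$ zero digits at positions $[p,p+r)$, the defining relation factors $f(n) = f(n|_{[0,p)})\, f(n|_{[p+r,\infty)})$. For \ref{cond:54:B}$\Rightarrow$\ref{cond:54:C}, I fix $\e$, take $K$ from the tail reformulation with $\eta$ tiny, and choose a window length $T$ so large that all but an $\e$-fraction of $n$ have a zero-run of length $r$ inside $[K,K+T)$; let $p = p(n) \in [K,K+T)$ be the first such. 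The high factor $f(n|_{[p+r,L)})$ has mean of modulus $\exp(-\la{f}{[p+r,L)}) \ge 1-\eta$, so Lemma~\ref{lem:concentration} places it within $\delta$ of a constant $e(\b_{p,L})$ for all but a small fraction of high parts; defining the $q^{K+T}$-periodic sequence $g_N(n) = f(n|_{[0,p)})\, e(\b_{p,L})$ then yields $\abs{f(n) - g_N(n)} \le O(\delta) \le \e$ off an $\e$-fraction of $[q^L]$. By Remark~\ref{remark:AP-robust} it suffices to treat $N = q^L$, so $f$ is $q$-almost periodic.

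For \ref{cond:54:C}$\Rightarrow$\ref{cond:54:B} I again argue contrapositively, and I expect this to be the main obstacle. Assuming the negation of \ref{cond:54:B}, I fix an arbitrary putative period $q^K$ and must defeat \emph{every} $q^K$-periodic approximant simultaneously. Here I condition on the positive-probability event $n|_{[K,K+r)} = 0$: on it $f(n) = f(n|_{[0,K)})\, f(n|_{[K+r,L)})$, while any $q^K$-periodic $g$ is frozen at the value $g(n|_{[0,K)})$. Choosing $L$ large enough that $\la{f}{[K+r,L)} \ge \log 2$ (possible by Lemma~\ref{lem:global>local}, since the $\lb{f}{\cdot}$-sum beyond $K+r$ still diverges), the high factor has mean of modulus $\le \tfrac12$; a second-moment estimate then forces $f(n|_{[K+r,L)})$ to lie at distance $\ge \tfrac12$ from \emph{any} fixed point on a definite proportion of high parts. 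Hence the approximation fails on a proportion $\ge c_r > 0$ of $[q^L]$, beating the fixed threshold $\e_0 := c_r/2$; as $K$ was arbitrary, $f$ cannot be $q$-almost periodic. The delicate points are making the failure proportion uniform in the period $q^K$ (so that a single $\e_0$ works for all $K$) and getting the non-concentration estimate quantitatively correct — both hinge on the gap trick converting the sparse quantity $\la{f}{[K+r,L)}$ into genuine spreading of $f$ across a positive-density set of ordinary integers.
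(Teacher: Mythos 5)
Your argument is correct and is essentially the paper's proof rearranged: the paper runs the cycle \eqref{cond:54:A}$\implies$\eqref{cond:54:B}$\implies$\eqref{cond:54:C}$\implies$\eqref{cond:54:A}, whereas you prove \eqref{cond:54:B}$\Leftrightarrow$\eqref{cond:54:A} and \eqref{cond:54:B}$\Leftrightarrow$\eqref{cond:54:C} separately, but every ingredient --- the tail reformulation of \eqref{cond:54:B} via a greedy choice of disjoint intervals, Lemma \ref{lem:global>local} together with the base-$q$ block covering, the zero-run factorisation combined with Lemma \ref{lem:concentration} to build the $q^{M}$-periodic approximant, and the conditioning on $r$ consecutive zero digits to defeat an arbitrary periodic approximant --- is the same as in the paper. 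The one phrase to tighten is ``factor blocks on a positive proportion of indices'' in your derivation of the negation of \eqref{cond:54:A}: factoring on a fixed positive proportion only yields $\limsup_{N}\abs{\EE_{n<N}f(nq^k)}\leq 1-c$, so you must iterate the decomposition across $O_{\e}(1)$ scales to cover all but an $\e$-fraction of $[N]$ by intervals of the form $mq^{L+r}+[q^L]$ for \emph{every} $\e>0$, exactly as in the paper's proof of \eqref{cond:54:A}$\implies$\eqref{cond:54:B}.
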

\begin{proof}\color{black}
	We will show the chain of implications $\text{\eqref{cond:54:A}}  \implies \text{\eqref{cond:54:B}} \implies   \text{\eqref{cond:54:C}} \implies 
	\text{\eqref{cond:54:A}}$. Fix $r \geq 0$ such that $f \in \QMD{q}{r}$.

	Suppose first that \eqref{cond:54:A} holds for some $k \geq 0$; replacing $f$ with its dilation if necessary, we may assume without loss of generality that $k = 0$.
	For any $N \geq 0$ and $\e > 0$ it is possible to cover $[N]$ with a union of $O_\e(1)$ disjoint intervals of the form $m q^{L+r} + [q^L]$ (with $m \geq 0$ and $L \geq \log_q(N) - O_\e(1)$) and a remainder set of size $\leq \e N$. The average of $f$ over any such interval $m q^{L+r} + [q^L]$ is the same in absolute value as the average over $[q^L]$, whence	
	\[
	0 < \limsup_{N \to \infty} \abs{\EEE_{n < N} f(n)} \leq O_{\e}\bra{ \limsup_{L \to \infty} \abs{\EEE_{n < q^L} f(n)} } + O(\e).
	\]
Taking sufficiently small $\e$, we conclude that 
\[
	\limsup_{L \to \infty} \abs{\EEE_{n < q^L} f(n)} = \limsup_{L \to \infty} \exp\braBig{ - \la{f}{[0,L)} }> 0,
\]
Hence, by Lemma \ref{lem:global>local} for any sequence of disjoint intervals $(I_i)_{i=1}^\infty$ we have
\[
	\sum_{i=1}^\infty \lb{f}{I_i} \ll_r \liminf_{L \to \infty} \la{f}{[0,L)} < \infty,
\]
which proves \eqref{cond:54:B} (in fact, we obtain a uniform bound on all possible sums in \eqref{cond:54:B}).

Secondly, suppose that $\text{\eqref{cond:54:B}}$ holds. Then for any $\e > 0$ there exist $K = K(\e)$ such that such that $\la{f}{I} \leq \e^3$ for all any interval $I$ with $\min I \geq K$. By Lemma \ref{lem:concentration}, for any such $I$ there exists $z_I \in \UU$ such that $\abs{ f(n) - z_I } \leq \e$ for $\e$-almost all $n \geq 0$ with $\supp(n) \subset I$. Let $M = M(\e)$ be the least integer $\geq K$ such that for $\e$-almost all $n$ with $\supp(n) \subset [K,M)$, the base-$q$ expansion $(n)_q$ contains a string of $r$ consecutive zeros at positions in $[K,M)$. For $L \geq M$ we construct a periodic approximation $g$ of $f$ on $[q^L]$ as follows. Let $n < q^L$; if $(n)_q$ contains $r$ consecutive zeros at positions in $[K,M)$ then pick the least index $l \geq K$ such that $n|_{[l,l+r)} = 0$ and set $g(n) = f(n|_{[0,l)}) z_{[l+r,L)}$; otherwise set $g(n) = 1$.
It follows directly from the construction that $g$ is $q^M$-periodic, and that $\abs{f(n) - g(n)} \leq \e$ for $2\e$-almost all $n  < q^L$. Since $\e > 0$ was arbitrary, $f$ is $q$-almost periodic and $\text{\eqref{cond:54:C}}$ follows.

Lastly, suppose that $\text{\eqref{cond:54:C}}$ holds and for the sake of contradiction assume that $\text{\eqref{cond:54:A}}$ is false. Then there exists $L \geq 0$ such that for any $N \geq 0$ there exists a $q^L$-periodic sequence $g_N \colon [N] \to \CC$ such that $\abs{f(n) - g_N(n)} < q^{-r}/10$ for $q^{-r}/10$-almost all $n < N$. Pick $N$ of the form $N = q^L N'$ with $N'$ large enough that $\abs{ \EEE_{n < N'} f(nq^L) } \leq 1/2$. Then

\begin{align*}
	q^{-r}/5 &\geq \EEE_{n<N} \abs{f(n) - g_N(n) } \geq \EEE_{m < q^L} \abs{\EEE_{n < N'}  f(q^L n + m) - g_N(m) }
	\\& \geq q^{-r}  \EEE_{m < q^{L-r}} \abs{f(m) \EEE_{n < N'}  f(nq^L) - g_N(m) } \geq q^{-r}/2,
\end{align*}	
which is a contradiction. Hence, $\text{\eqref{cond:54:A}}$ follows.
\end{proof}

\makeatletter{}\section{Locally controlled sequences}\label{sec:Local}

In this section, we study $q$-semimultiplicative sequences in more detail.  The key property of these sequences which we exploit in the proof of Theorem \ref{thm:B} is that, conversely to Lemma \ref{lem:global>local}, the contributions $\la{f}{I}$ corresponding to arbitrarily long intervals $I$ are controlled in terms of similar terms corresponding to intervals of bounded length. We make this precise in the Proposition \ref{prop:global<local}, which is the main result of this section. To begin with, we mention two simple lemmas.

\begin{lemma}\label{lem:lambda-alternative}
	Let $\varphi \colon \NN_0 \to \RR$ be any sequence and let $f \colon \NN_0 \to \UU$ be given by $f(n) = e(\varphi(n))$. Then for any interval $I \subset \NN_0$ we have
	\begin{equation}
	\label{eq:69:00}
	\lb{f}{I} \sim \inf_{\b \in \RR} \EEE_{\supp(n) \subset I} \fpa{\varphi(n) - \b}^2.
	\end{equation}
\end{lemma}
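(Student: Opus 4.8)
The plan is to pass through the intermediate quantity $1 - |S|$, where $S = \EEE_{\supp(n) \subset I} f(n)$, so that $\la{f}{I} = -\log|S|$ by definition. I will prove the two comparisons $\lb{f}{I} \sim 1 - |S|$ and $1 - |S| \sim D$, where $D := \inf_{\b \in \RR}\EEE_{\supp(n)\subset I}\fpa{\varphi(n) - \b}^2$, and then chain them to obtain \eqref{eq:69:00}. Both comparisons rest on the elementary sandwich
\[
8\fpa{y}^2 \le 2\sin^2(\pi y) \le 2\pi^2 \fpa{y}^2 \qquad (y \in \RR),
\]
which follows from $2t \le \sin(\pi t) \le \pi t$ for $t = \fpa{y} \in [0,1/2]$ (the left inequality by concavity of $\sin$ on $[0,\pi/2]$, the right one standard) together with $\sin^2(\pi y) = \sin^2(\pi\fpa{y})$ and $1-\cos(2\pi y)=2\sin^2(\pi y)$.

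For the comparison $1-|S|\sim D$ I treat the two directions with slightly different tools. For the upper bound, I use that for every $\b \in \RR$ one has $|S| \ge \Re\bra{e(-\b) S} = \EEE_{\supp(n)\subset I}\cos\bra{2\pi(\varphi(n)-\b)}$, whence
\[
1 - |S| \le \EEE_{\supp(n)\subset I}\bra{1 - \cos\bra{2\pi(\varphi(n)-\b)}} = \EEE_{\supp(n)\subset I} 2\sin^2\bra{\pi(\varphi(n)-\b)} \le 2\pi^2 \EEE_{\supp(n)\subset I}\fpa{\varphi(n)-\b}^2 ;
\]
taking the infimum over $\b$ gives $1 - |S| \le 2\pi^2 D$. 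For the lower bound the key point is to use the \emph{exact} phase $\theta = \arg(S)/(2\pi)$, for which the first inequality above becomes an equality, so that $1 - |S| = \EEE_{\supp(n)\subset I} 2\sin^2\bra{\pi(\varphi(n)-\theta)} \ge 8\,\EEE_{\supp(n)\subset I}\fpa{\varphi(n)-\theta}^2 \ge 8D$. Hence $8D \le 1 - |S| \le 2\pi^2 D$.

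It remains to compare $\lb{f}{I} = \min(\la{f}{I},1)$ with $1-|S|$. Writing $x = \la{f}{I} \in [0,+\infty]$, we have $1 - |S| = 1 - e^{-x}$ and $\lb{f}{I} = \min(x,1)$, so the claim reduces to the elementary fact $\min(x,1) \sim 1 - e^{-x}$ on $[0,+\infty]$: on $[0,1]$ one has $(1-e^{-1})\,x \le 1 - e^{-x} \le x$ since $(1-e^{-x})/x$ is decreasing, and on $(1,+\infty]$ both sides lie in $[1-e^{-1},1]$ (with $e^{-\infty}=0$, covering the case $|S|=0$). Combining the two comparisons yields $\lb{f}{I} \sim 1 - |S| \sim D$, which is exactly \eqref{eq:69:00}. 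Note that the whole argument uses only the definitions and makes no appeal to any multiplicativity of $\varphi$.

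The only genuine subtlety, and the step I would be most careful about, is the lower bound $1 - |S| \ge 8D$: one must extract the \emph{full second moment} directly rather than route through a tail estimate. Indeed, feeding the Chebyshev-type bound of Lemma \ref{lem:concentration} into a layer-cake integration of $\fpa{\varphi(n)-\b}^2$ produces a spurious factor of $\log(1/\lb{f}{I})$, which would destroy the comparison $\sim$. Evaluating the identity at the exact phase $\theta$ avoids this loss entirely, and every remaining estimate is routine.
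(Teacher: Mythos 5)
Your proof is correct and follows essentially the same route as the paper, which compresses the whole argument into the chain $\lb{f}{I} \sim 1-\exp(-\la{f}{I}) = \inf_{\b}\EE\,\Re\bra{1-e(\varphi(n)-\b)} \sim \inf_{\b}\EE\fpa{\varphi(n)-\b}^2$ and labels it ``elementary analysis''; your use of the exact phase $\theta = \arg(S)/2\pi$ is precisely what makes the middle equality (and hence the lower bound without a logarithmic loss) work. Your write-up simply supplies the details, including the comparison $\min(x,1)\sim 1-e^{-x}$ and the sine sandwich, all of which are sound.
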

\begin{proof}
	It follows from elementary analysis and the definition of $\lb{f}{I}$ that

	\begin{align*}
	\lb{f}{I} \sim 1 - \exp\bra{\la{f}{I}}
	 &= \inf_{\b \in \RR} \EEE_{\supp(n) \subset I} \Re\bra{1 -  e (\varphi(n) - \b)}
	 \\ &\sim \inf_{\b \in \RR} \EEE_{\supp(n) \subset I} \fpa{\varphi(n) - \b}^2.
	  \qedhere
	\end{align*}
\end{proof}

\begin{lemma}\label{lem:lambda-addition}
	Let $f \in \QMD{q}{r}$, let $(I_i)_{i=1}^s$ be a sequence of finite subsets of $\NN_0$ separated by gaps of length $>r$, and put $I = \bigcup_{i=1}^s I_i$. Then
	\begin{equation}
	\label{eq:69:01}
	\la{f}{I}  = \sum_{i=1}^s \la{f}{I_i}.
	\end{equation}	
\end{lemma}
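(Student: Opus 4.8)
The plan is to reduce the identity to a clean multiplicative factorisation of $f$ over the blocks $I_i$ and then to invoke independence of digits. Writing $n_i = n|_{I_i}$, the crucial claim is that
\[
f(n) = \prod_{i=1}^s f(n|_{I_i}) \quad\text{whenever } \supp(n) \subset I.
\]
Granting this, observe that as $n$ ranges uniformly over $\{n : \supp(n) \subset I\}$ the digit-blocks $\bra{n|_{I_i}}_{i=1}^s$ are independent and each $n|_{I_i}$ is uniform on $\{m : \supp(m) \subset I_i\}$, since $I = \bigcup_i I_i$ is a \emph{disjoint} union of position sets. Hence the expectation factorises,
\[
\EEE_{\supp(n) \subset I} f(n) = \prod_{i=1}^s \EEE_{\supp(m) \subset I_i} f(m),
\]
and taking absolute values and applying $-\log$ turns this product into the sum \eqref{eq:69:01}; the convention regarding $+\infty$ (a vanishing average) is preserved on both sides.

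To prove the factorisation I would argue via the \emph{runs} of $\supp(n)$. Order the support positions $p_1 < \dots < p_t$ and group them into maximal runs $R_1,\dots,R_u$, where two consecutive support positions lie in the same run precisely when they differ by at most $r$; thus distinct runs are separated by gaps $> r$. Peeling off the top run using $q$-quasimultiplicativity \eqref{eq:96:21} gives $f(n) = f(n|_{R_u})\, f(n|_{R_1 \cup \dots \cup R_{u-1}})$: taking $l = \max R_{u-1}+1$, the low part $n|_{R_1 \cup \dots \cup R_{u-1}}$ is $< q^l$ while the high part $n|_{R_u}$ is divisible by $q^{l+r}$ because $\min R_u - \max R_{u-1} > r$. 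Iterating yields $f(n) = \prod_{k=1}^u f(n|_{R_k})$.

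The key structural point, and the step I expect to be the main obstacle, is that each run is contained in a single block $I_i$. Indeed, if a run met two distinct blocks then, scanning its sorted positions, one would find two consecutive run-positions in different blocks, hence at distance $\leq r$, contradicting the hypothesis that the $I_i$ are separated by gaps $> r$. Consequently the runs partition according to the blocks, and since $\supp(n) \cap I_i$ is exactly the union of those runs contained in $I_i$, regrouping the (commuting) product gives $f(n) = \prod_i \prod_{R_k \subset I_i} f(n|_{R_k}) = \prod_i f(n|_{I_i})$, where the last equality is the run-factorisation applied to each $n|_{I_i}$ separately. This completes the factorisation and hence the proof. The subtlety to keep in mind throughout is that the $I_i$ need not be intervals and may interleave positionally; the runs argument is precisely what lets us avoid a single positional cut, using only that separation is measured by gaps of length $> r$.
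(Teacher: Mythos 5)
Your proof is correct and follows essentially the same route as the paper: the paper's entire proof consists of the observation that $\EE_{\supp(n)\subset I} f(n) = \prod_{i=1}^s \EE_{\supp(n)\subset I_i} f(n)$ followed by taking $-\log\abs{\cdot}$, and this factorisation is exactly what you establish via the pointwise identity $f(n)=\prod_i f(n|_{I_i})$ together with independence of the digit blocks. Your runs argument is a correct and careful justification of the step the paper leaves implicit, in particular handling the point that the sets $I_i$ need not be intervals and may interleave.
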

\begin{proof}
	This follows directly from the definition of $\la{f}{\cdot}$ and the observation that
	\[
	\EEE_{\supp(n) \subset I} f(n) = \prod_{i=1}^s \EEE_{\supp(n) \subset I_i} f(n). \qedhere
	\]
\end{proof}

We are now ready to prove the main result in this section.
 
\begin{proposition}\label{prop:global<local}
	Let $f \in \SMD{q}{r}$ and let $I \subset \NN_0$ be an interval. Then there exists a sequence $(I_i)_{i=1}^s$ of disjoint intervals of length $\leq 2r$ separated by gaps $> r$ and contained in $I$ such that 
\begin{equation}\label{eq:54:11}
	 \lb{f}{I} \ll \sum_{i=1}^s \lb{f}{I_i}.
\end{equation}	
\end{proposition}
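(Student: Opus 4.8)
The plan is to establish the reverse of Lemma~\ref{lem:global>local}: the cancellation over the long interval $I$ is no larger than the accumulated cancellation coming from short windows. I will make three reductions at the outset. Since $\lb{f}{\cdot}\le 1$, whenever I can produce short well-separated intervals $I_i\subset I$ with $\sum_i\lb{f}{I_i}$ bounded below by an absolute constant the inequality is immediate; hence I may work in the regime where all local contributions are small. I may also take $I=[0,L)$, and, via Lemma~\ref{lem:lift-T-to-R}, write $f=e(\varphi)$ with $\varphi\in\SA{q}{r}(\RR)$, so that by Lemma~\ref{lem:multi} the phase is a sum of local window contributions, $\varphi(n)=\sum_i\tilde\gamma(n|_{[i,i+r]})$. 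By Lemma~\ref{lem:lambda-alternative}, $\lb{f}{J}\sim\inf_{\b}\EEE_{\supp(n)\subset J}\fpa{\varphi(n)-\b}^2$ for every interval $J$. The naive route would bound the circular variance of $\varphi$ on $[0,L)$ by the sum of the window variances using their finite range of dependence; this fails, because the \emph{real} variance of $\varphi$ may be arbitrarily large (the window values can drift in $\RR$ while remaining tightly clustered modulo $1$), and cannot be compared with a sum of $\lb$'s, each at most $1$. This wrap-around is exactly what the multiplicative structure absorbs, and it is here that semimultiplicativity---through the reconstruction identity of Lemma~\ref{lem:semimulti-and-lambda}, unavailable for merely quasimultiplicative sequences---must enter. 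I therefore pass back to $f$ and the quantities $\la{f}{\cdot}$.

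Partition $[0,L)$ into consecutive blocks of length $\ell$ with $r<\ell\le 2r$ and three-colour them cyclically, obtaining $[0,L)=I_1\sqcup I_2\sqcup I_3$ with each colour class a union of blocks separated by gaps $>r$. Writing $X_c=n|_{I_c}$, which are independent for uniform $n$ with $\supp(n)\subset[0,L)$, Lemma~\ref{lem:semimulti-and-lambda} gives
\[
f(n)=\Big(\prod_{c=1}^3 f(n|_{I_c})\Big)\,h_{12}h_{23}h_{31},\qquad h_{cc'}=f(n|_{I_c\cup I_{c'}})\,\overline{f(n|_{I_c})}\,\overline{f(n|_{I_{c'}})}.
\]
Each interface factor $h_{cc'}$ depends only on the digits within distance $r$ of the boundaries between adjacent blocks, and since the relevant block pairs are $>r$-separated it factors as a product of independent local corrections. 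Taking expectations, using independence of the $X_c$ for the product and the telescoping inequality for unit-modulus factors, I obtain
\[
\abs{\EEE_{\supp(n)\subset[0,L)}f(n)}\ \ge\ \prod_{c=1}^3\abs{\EEE f(n|_{I_c})}-\sum_{\{c,c'\}}\EEE\abs{h_{cc'}-1},
\]
where the product equals $\exp(-\sum_c\la{f}{I_c})$. In the small regime the first term is close to $1$; taking logarithms and using $-\log(1-x)\ll x$ yields $\la{f}{[0,L)}\ll\sum_c\la{f}{I_c}+\sum_{\{c,c'\}}\EEE\abs{h_{cc'}-1}$. By Lemma~\ref{lem:lambda-addition} the first sum equals $\sum_B\la{f}{B}$ over all blocks $B$, each an interval of length $\le 2r$, with $\la{f}{B}=\lb{f}{B}$ in the small regime.

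The crux---and the step I expect to be the main obstacle---is to bound the interface terms $\sum_{\{c,c'\}}\EEE\abs{h_{cc'}-1}$ by a sum of local $\lb{f}{\cdot}$ over the length-$\le 2r$ boundary windows, without the square-root loss that a naive second-moment estimate $\EEE\abs{h_{cc'}-1}\le(2-2\Re\EEE h_{cc'})^{1/2}$ would incur. My plan is to exploit that, because the individual boundary corrections depend on disjoint blocks of digits, their contributions \emph{add} rather than merely subadd: I will relate $\sum_{\{c,c'\}}\EEE\abs{h_{cc'}-1}$ to $\sum_{\{c,c'\}}\la{f}{I_c\cup I_{c'}}$ via Lemma~\ref{lem:lambda-addition}, and then invoke the concentration estimate Lemma~\ref{lem:concentration} in the small regime to upgrade the resulting control of $\sum(1-\Re\EEE(\cdots))$ to the desired $L^1$ bound with only a constant loss; the normalisation $f(0)=1$ and the shortness of each boundary window are what keep the circular and linear deviations from diverging. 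Finally, the blocks $B$ and the boundary windows are intervals of length $\le 2r$ overlapping with bounded multiplicity; averaging a periodic selection pattern over its $O(r)$ shifts, some shift selects a pairwise $>r$-separated subfamily whose $\lb$-sum is a constant fraction of the total (the factors of $r$ cancelling between block length and period), and this subfamily is the required sequence $(I_i)$. Combining the estimates gives $\lb{f}{[0,L)}\le\la{f}{[0,L)}\ll\sum_i\lb{f}{I_i}$.
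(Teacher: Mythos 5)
Your overall architecture is close to the paper's --- the same cyclic three-colouring into blocks of length $O(r)$ and the same appeal to Lemma \ref{lem:semimulti-and-lambda} --- but the step you yourself flag as the crux is a genuine gap, and the repair you sketch cannot work. The obstruction is that $\EEE\abs{h_{cc'}-1}$ is a \emph{first-order} ($L^1$) quantity in the phase deviation, while every $\lb{f}{\cdot}$ is second-order: by Lemma \ref{lem:lambda-alternative} it is a squared circle-norm. Concretely, take $q=2$, $r=1$ and $f(n)=e\bra{\delta\,\freq_2^{11}(n)}$ with $\delta$ small. Every window $W$ of length $\leq 2$ has $\lb{f}{W}\sim\delta^2$, so on an interval of length $L$ the available budget is $\sum_i\lb{f}{I_i}\sim L\delta^2=:c$; but the interface phase of $h_{cc'}$ is a sum of $\sim L$ independent increments of size $\delta$, so its fluctuation is of order $\sqrt{L}\,\delta=\sqrt{c}$ and $\EEE\abs{h_{cc'}-1}$ is of order $\min(1,\sqrt{c})$. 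In the small regime you reduced to, $\sqrt{c}\gg c$, so your main inequality only yields $\la{f}{I}\ll\sqrt{\sum_i\lb{f}{I_i}}$, which is strictly weaker than \eqref{eq:54:11} and too weak for the summability argument that drives the proof of Theorem \ref{thm:B}. Neither Lemma \ref{lem:lambda-addition} nor Lemma \ref{lem:concentration} can remove this: additivity of $\la{f}{\cdot}$ controls $1-\Re\EEE(\cdots)$, i.e.\ the second-order quantity, and converting that into an $L^1$ bound via concentration reintroduces exactly the square root ($\la{}{}\leq\e$ gives concentration at scale $\sqrt{\e}$, hence $\EEE\fpa{\cdots}\ll\sqrt{\e}$).

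The paper avoids ever forming a multiplicative interface term. It writes $f=e(\varphi)$ with $\varphi\in\SA{q}{r}(\RR)$ and applies the reconstruction identity \eqref{eq:17:00} directly to the phase \emph{inside} the squared circle-norm of Lemma \ref{lem:lambda-alternative}: $\varphi(n)$ is a signed sum of the six quantities $\varphi(n|_{J})$ with $J$ ranging over the colour classes and their pairwise unions, so $\fpa{\varphi(n)-\b}^2\ll\sum_{J}\fpa{\varphi(n|_{J})-\b_J}^2$ with only a six-fold loss, giving $\lb{f}{I}\ll\max_J\lb{f}{J}$. Each such $J$ is a union of intervals of length $\leq 2r$ separated by gaps $>r$, and Lemma \ref{lem:lambda-addition} then gives the \emph{exact} identity $\la{f}{J}=\sum_i\la{f}{I_i}$ --- this is where the wrap-around/drift you rightly worried about is absorbed, multiplicatively and losslessly, without any $L^1$ object appearing. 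Your instinct that the multiplicative structure must absorb the drift was correct; the mistake was to telescope $f$ itself rather than to keep the entire computation at the $L^2$ level of the phase.
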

\begin{proof}
Let $\varphi \in \SA{q}{r}(\RR)$ be such that $f(n) = e(\varphi(n))$ for $n \geq 0$. 
By Lemma \ref{lem:lambda-alternative}
\begin{equation}\label{eq:54:12}
	 \lb{f}{I} \sim \inf_{\b \in \RR} \EEE_{\supp(n) \subset I} \fpa{\varphi(n) - \b}^2.
\end{equation}	
For $j \in \{0,1,2\}$ let $J_j = I \cap \bra{3r \ZZ + [jr,(j+1)r) }$. By Lemma \ref{lem:semimulti-and-lambda}, for any $n \geq 0$ with $\supp(n) \subset I$ we have
\begin{equation}\label{eq:54:13}
	\varphi(n) = \varphi\bra{ n|_{J_1 \cup J_2 }} + \varphi\bra{ n|_{J_2 \cup J_3 }} + \varphi\bra{ n|_{J_3 \cup J_1 }} - \varphi\bra{ n|_{J_1 }} - \varphi\bra{ n|_{J_2 }} - \varphi\bra{ n|_{J_3 }}.
\end{equation}	
Inserting \eqref{eq:54:13} into \eqref{eq:54:12} and using the fact that $\fpa{x+y}^2 \ll \fpa{x}^2 + \fpa{y}^2$ for any $x, y \in \RR$ and using Lemma \ref{lem:lambda-alternative} again, we conclude that
\begin{equation}\label{eq:54:14}
	 \lb{f}{I} \ll \max_{J \in \mathcal{J}} \inf_{\gamma \in \RR} \EEE_{\supp(n) \subset I} \fpa{\varphi(n) - \gamma}^2 \ll \max_{J} \lb{f}{J},
 \end{equation}	
 where the maximum is taken over $\mathcal{J} = \{J_1,J_2,J_3,J_1 \cup J_2, J_2 \cup J_3, J_3 \cup J_1\}$. Any $J \in \mathcal{J}$ can be expressed as a union of disjoint intervals of length $\leq 2r$ separated by gaps of length $> r$, whence \eqref{eq:54:11} follows from Lemma \ref{lem:lambda-addition}.
\end{proof}

\begin{remark}
Proposition \ref{prop:global<local} is the only place on the route to the proof of Theorem \ref{thm:A} in where we use the assumption of $q$-semimultiplicativity. Hence, any $q$-quasimultiplicative which satisfies the conclusion of Proposition \ref{prop:global<local} for some $r \geq 0$ is orthogonal to the M\"{o}bius function. In particular, our argument can be applied to strongly $q$-quasimultiplicative sequences.
\end{remark}

\makeatletter{}\newcommand{\C}{{A}}
\newcommand{\D}{{B}}
\newcommand{\kk}{{k_0}}
\newcommand{\mm}{{k_1}}

\section{Structured sequences}\label{sec:Dichotomy}

Recall that for $\alpha \in \QQ$, the $q$-multiplicative sequence $f(n) = e(\alpha n)$ fails to satisfy the Katai--Bourgain--Sarnak--Ziegler criterion; indeed $f(pn) \bar f(p'n) = 1$ for all $n \geq 0$ if $(p-p')\alpha \in \ZZ$. In this section we study $q$-quasimultiplicative sequences $f$ such that for some distinct integers $p,p'$, $f(pn) \bar f(p'n)$ is very close to $1$ for many $n \geq 0$. We show that any such sequence needs to resemble one of the linear phase functions mentioned above.

\begin{proposition}\label{lem:bilinear-sums}
	For any $\C \geq 0$ there exist $\kk = \kk(A) \geq 0$ such that the following holds.
		Let $\varphi \in \QA{q}{\C}(\RR)$, let $2 \leq p,p' \leq \C$ be distinct integers coprime to $q$, and let $I \subset \NN_0$ be an interval. Define $\psi \in \QA{q}{}(\RR)$ by $\psi(n) = \varphi(pn) - \varphi(p'n)$ for $n \geq 0$ and put $J = [\min I + \kk, \max I - \kk]$. 
	Suppose further that for some $\beta \in \RR$ and $\e>0$ it holds that
	\begin{equation}\label{eq:11:90}
		 \fpa{ \psi(n) - \beta } \leq \e \quad \text{for all $n \geq 0$ with $\supp(n) \subset I$.}
	\end{equation}
	Then there exists $\alpha = \ffrac{a}{b} \in \QQ$ with $b$ divisible by $p-p'$ and coprime to $q$ such that
	\begin{equation}\label{eq:11:90x}
	\fpa{\varphi(n) - \alpha n} \ll_\C s_q(n)\e \quad \text{ for all $n \geq 0$ with $\supp(n) \subset J$.} 
	\end{equation}
\end{proposition}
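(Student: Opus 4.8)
The plan is to translate the near-constancy of $\psi$ into genuine near-linearity of $\varphi$ on $J$, and to read off the arithmetic of the slope only at the very end. Throughout I work with the real-valued $\varphi$ and treat $\e$ as fixed. The trimming constant $\kk(A)$ should be chosen large enough to absorb two effects: the $O_A(1)$ digits of carry produced when an integer is multiplied by $p$ or $p'\le A$, and the extra room needed to separate two partial supports by gaps $>A$ so that quasiadditivity applies cleanly.

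First I would normalise $\beta$. Pick $n_1,n_2$ with disjoint supports inside $I$, separated by a gap larger than $\kk$, arranged so that $pn_1,pn_2$ (and $p'n_1,p'n_2$) still have supports separated by gaps $>A$. Quasiadditivity then makes $\psi$ additive on this configuration, $\psi(n_1+n_2)=\psi(n_1)+\psi(n_2)$, and comparing each of the three inputs with $\beta$ via the triangle inequality forces $\fpa{\beta}\ll\e$. Hence, after shrinking $I$ by $\kk$, I may assume $\fpa{\varphi(pn)-\varphi(p'n)}\ll\e$ for every $n$ supported in the trimmed interval.

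Next I would manufacture a single slope from local data using the shadowing Lemma \ref{lem:shadowing}. For each position $l$ in the interior set $\alpha_l:=\varphi(q^l)\bmod 1$. The heart of the argument is the compatibility estimate $\fpa{\alpha_{l+1}-q\alpha_l}\ll_A\e$: this is where $\gcd(p,q)=\gcd(p',q)=1$ enters, since multiplication by $p$ and $p'$ permutes digits without respecting the $\times q$ scaling, and one must combine the fixed-scale relations $\varphi(pq^l)\approx\varphi(p'q^l)$ with the block structure of quasiadditivity (Lemma \ref{lem:multi}) to link the data at consecutive positions. Feeding $(\alpha_l)$ into Lemma \ref{lem:shadowing} produces $\tilde\alpha\in\RR$ with $\fpa{\varphi(q^l)-\tilde\alpha q^l}\ll_A\e$ for all $l\in J$. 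I would then upgrade from $q^l$ to $aq^l$ (proportionality in the digit $a$, again from the fixed-scale near-invariance) and glue across positions by quasiadditivity, each of the at most $s_q(n)$ digit-units contributing an error $O_A(\e)$, to obtain $\fpa{\varphi(n)-\tilde\alpha n}\ll_A s_q(n)\e$ on $J$.

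Finally I would extract the rational slope. Restricting to $n$ supported in a further-trimmed sub-interval $J'$, so that $pn,p'n$ remain in $J$, and substituting the near-linearity just proved into $\fpa{\psi(n)}\ll\e$ gives $\fpa{\tilde\alpha(p-p')n}\ll_A s_q(n)\e$. The self-similarity $\tilde\alpha(p-p')q^{l+1}=q\cdot\tilde\alpha(p-p')q^l$ pins $\tilde\alpha(p-p')$ to a $q$-adic approximation of an integer, and since every $n$ supported in $J$ is divisible by $q^{\min J}$, the stray power of $q$ is harmless: using $\gcd(p-p',q)=1$ I can choose, by the Chinese remainder theorem, an integer $a$ so that $\alpha:=a/(p-p')$ satisfies $(\alpha-\tilde\alpha)q^{\min J}\in\ZZ$ up to $O_A(\e)$. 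Then $\fpa{(\alpha-\tilde\alpha)n}$ is negligible for $n$ supported in $J$, so $\alpha$ is a rational of the required form with $b$ divisible by $p-p'$ and coprime to $q$, and $\fpa{\varphi(n)-\alpha n}\ll_A s_q(n)\e$ on $J$. I expect the main obstacle to be the cross-scale compatibility of the local slopes in the third step—this is precisely what forces the definition of $\kk(A)$—with the arithmetic extraction in the last step a close second.
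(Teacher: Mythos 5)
There is a genuine gap at exactly the step you yourself flag as ``the heart of the argument.'' Your construction rests on two unproved claims: the cross-scale compatibility $\fpa{\alpha_{l+1}-q\alpha_l}\ll_A\e$ for $\alpha_l=\varphi(q^l)\bmod 1$, and the approximate additivity $\varphi(n+m)\approx\varphi(n)+\varphi(m)$ for $n,m$ whose supports are adjacent in scale (needed both for the upgrade $\varphi(aq^l)\approx a\varphi(q^l)$ and for gluing the $s_q(n)$ digit-units at the end). Quasiadditivity by itself gives exact additivity only when the supports are separated by a gap $>A$, and the hypothesis \eqref{eq:11:90} only compares $\varphi(pn)$ with $\varphi(p'n)$ at the \emph{same} $n$; saying that one must ``combine the fixed-scale relations with the block structure of quasiadditivity'' names the problem without solving it. The missing device in the paper is a concrete shrinking mechanism (Lemma \ref{claim:001}): given $m<q^{l+B}$ and $q^{l}\mid n$, one first perturbs $m$ and $n$ at harmless positions near the ends of $I$ so that $p^t\mid m,\ p^t\mid n$ for the least $t$ with $(p/p')^{t}>q^{B+r}$, and then applies \eqref{eq:11:90} $t$ times in the form $\varphi(k)=\varphi\bigl((p'/p)k\bigr)+O(\e)$ to replace $m$, $n$, $m+n$ by $(p'/p)^{t}m$, $(p'/p)^{t}n$, $(p'/p)^{t}(m+n)$. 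Multiplication by $(p'/p)^{t}<q^{-B-r}$ lowers the top of the support of $m$ by more than $B+r$ places, so the images have supports separated by a gap $>r$ and exact quasiadditivity applies; undoing the substitution yields $\varphi(n+m)=\varphi(n)+\varphi(m)+O_{A,B}(\e)$. This is also what dictates the size of $k_0(A)$: it must leave room for the digit adjustments forcing $p^{t}$-divisibility. Without this lemma or an equivalent mechanism, neither the shadowing input nor the gluing step in your outline can be carried out, so the proposal as written is not a proof.

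Two smaller points. Your normalisation of $\beta$ is an unnecessarily roundabout version of the paper's one-line observation that $\psi(0)=0$ and $0$ is supported in $I$, so $\fpa{\beta}\le\e$ immediately. Your final rationality extraction does match the paper's: apply \eqref{eq:11:90} at $n=q^{l}$ to get $\fpa{\alpha(p-p')q^{l}}\ll_A\e$ for all $l\in J$, shadow to pin $\alpha(p-p')q^{\min J}$ to an integer, and then adjust the denominator to be coprime to $q$. So the skeleton of your argument is the right one, but its load-bearing step is missing.
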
	

\begin{proof}
Fix $\C$, and let $\kk$ be a large constant, to be determined in the course of the argument. We can assume that $\abs{I}$ is large and $\e$ is small in terms of $\C$, since otherwise the claim is trivially true with $\alpha = 0$. Since $\psi(0) = 0$, $\fpa{\b} \leq \e$, so replacing $\e$ with $2\e$ we may assume without loss of generality that $\b = 0$. For concreteness, assume also that $p>p'$. 
\begin{lemma}\label{claim:001}
	For any $\D \in \ZZ$ there exists $\mm = \mm(\D) \geq 0$ such that the following is true.  	Then 
\begin{equation}\label{eq:19:00}
\varphi( n + m ) = \varphi(n) + \varphi(m) + O_{\C,\D}(\e)
\end{equation}
holds for any $m,n \geq 0$ with $\supp(m),\ \supp(n),\ \supp(m+n) \subset [\min I + \mm, \max I - \mm]$ and $m < q^{l+B}$, $q^{l} | n$ for some $l \geq 0$.
	\end{lemma}
\begin{proof}
	Note that \eqref{eq:19:00} is true (even without the error term) if $\D \leq -r$ (so that the digits of $n$ and $m$ are separated by $\geq r$ zeros). We reduce to this case as follows.
	
	Let $t = O_{\C,\D}(1)$ be the least integer such that $(p/p')^t > q^{\D+r}$. Pick $\mm$ large enough that $q^{\mm} \geq q^{r} p^t = O_{\C,\D}(1)$. Because $p$ and $q$ are coprime, we can construct an integer $\tilde m$ with $\supp (\tilde{m}) \subset I$ and $p^t \mid m$ such the digits of $m$ and $\tilde m$ only differ on positions in $[\min I, \min I + \mm -r )$. Likewise, we can construct $\tilde n$ with $\supp (\tilde n) \subset I$ with $p^t \mid \tilde n$, differing from $n$ only on positions in $(\max I - \mm + r, \max I]$. Let $\tilde m' = (p'/p)^t \tilde m$ and $\tilde n' = (p'/p)^t \tilde n$.
	By construction, $\tilde m'$ and $\tilde n'$ are integers and 	
	\[
	\max \supp(\tilde m') \leq \max \supp(\tilde m) - \D - r \leq \min \supp(\tilde n) - r = \min \supp(\tilde n') - r,
	\]
	meaning that the positions where non-zero digits appear in $\tilde m'$ and $\tilde n'$ are separated by a gap of length $> r$. Note also that the positions where $m+n$ and $\tilde m + \tilde n$ differ are also separated by gaps of length $> r$ from $\supp(m+n)$, and likewise for $m$ and $\tilde m$, and for $n$ and $\tilde n$. Applying \eqref{eq:11:90} repeatedly we now obtain
	
\begin{align*}\label{eq:19:01}
\varphi(n+m) - \varphi(n) - \varphi(m)  
&= \varphi(\tilde n + \tilde m) - \varphi(\tilde n) - \varphi(\tilde m) 
\\&= \varphi(\tilde n' + \tilde m') - \varphi(\tilde n') - \varphi(\tilde m') + O_{}(t\e) 
 = O_{\C,\D}(\e).
\end{align*}
which is precisely \eqref{eq:19:00}.
\end{proof}

We will apply the above Lemma with $\D = \max(\ceil{\log_q p}, 2) = O_{\C}(1)$. Assume that $\kk \geq \mm(\D) + B$.
Iterating Lemma \ref{claim:001}, we conclude that for any $n < q^\D$ and $l \in J$ we have
\begin{equation}
\label{eq:19:99}
\varphi(nq^l) = n \varphi(q^l) + O_{\C}(\e).
\end{equation}
It follows from the variant of the shadowing lemma, Lemma \ref{lem:shadowing}, and \eqref{eq:19:99} applied with $n = q$ that there exists $\alpha \in \RR$ such that 
\begin{equation}
\label{eq:19:98}
 \varphi(q^l) = q^l \alpha + O_{\C}(\e) \text{ for all $l \in J$.}
\end{equation}
Writing any $n \geq 0$ with $\supp(n) \subset J$ as a sum of $s_q(n)$ not necessarily distinct powers of $q$ and applying Lemma \ref{claim:001} and \eqref{eq:19:98} repeatedly, we conclude that 
\begin{equation}
\label{eq:19:97}
 \varphi(n) = n \alpha + O_{\C}(s_q(n) \e).
\end{equation}

Next, we address rationality of $\alpha$. For any $l \in J$ we have 
\begin{equation}
\label{eq:19:96}
\fpa{\alpha(p-p')q^l} = \fpa{ \varphi(pq^l) - \varphi(p'q^l)} + O_{\C}(\e) = O_{\C}(\e).
\end{equation}
Applying Lemma \ref{lem:shadowing} again (or reasoning directly), we conclude that
\[
	\fpa{\alpha(p-p')q^{\min J} } = O_{\C}(\e/q^{\max J}).
\] 
This means that there exists $\a'$ with $ \a'(p-p')q^{\min J} \in \ZZ$ such that \eqref{eq:19:97} holds with $\a'$ in place of $\a$. Without loss of generality, we may assume that $\a = \a'$.

Lastly, let $m = O_A(1)$ be large enough that the numerator of $(p-p')/q^m$ in reduced form is coprime to $q$. Then the denominator of $\a q^{\min J + m}$ is coprime to $q$, whence there exists $\a''$ such that the denominator of $\a''$ is divisible by $p-p'$ and coprime to $q$ and \eqref{eq:19:97} holds with $\a''$ in place of $\a$ for $n$ with $\supp(n) \subset [\min J +m, \max J]$. Replacing $\a$ with $\a''$ and $\kk$ with $\kk'' = \kk + m$ finishes the argument.
\end{proof}

\begin{remark}\label{rmrk:a-is-unique}
	With the same notation as in Proposition \ref{lem:bilinear-sums}, there exists $\e_0 = \e_0(A)$ such that the value of $\a$ is unique as long as $\e \leq \e_0$ and $\abs{I} \geq 2\kk+1$.
\end{remark}

Unfortunately, in Proposition \ref{lem:bilinear-sums} we need to make assumptions concerning extreme behaviour of $f$, as opposed to average behaviour. For general $q$-multiplicative sequences, we do not know if an analogue of Proposition \ref{lem:bilinear-sums} is true where maxima are replaced with averages. Of course, the distinction between extreme and average behaviour disappears when restrict our attention to intervals of bounded length. We record this observation in the following corollary.

\begin{corollary}\label{cor:structured}
	Let $\C$ and $\kk$ be as in Proposition \ref{lem:bilinear-sums}. Let $f \in \QM{q}{r}(\UU)$ and let $2 \leq p,p' \leq \C$ be distinct integers coprime to $q$. Define $g \in \QM{q}{}(\UU)$ by $g(n) = f(pn)\bar f(p'n)$ for $n \geq 0$. Then for any intervals $J \subset I \subset \NN_0$ with $\min J - \min I = \max I - \max J = k_0$ there exists $\alpha = a/b \in \QQ$, with denominator dividing $p-p'$ and coprime to $q$ such that for $h \in \QM{q}{r}(\UU)$ given by $h(n) = f(n)e(-\a n)$ for $n \geq 0$ we have
\begin{equation}\label{eq:32:00}
		\lb{h}{J} \ll_{\C} \abs{I}^2 q^{\abs{I}} \lb{g}{I}.
\end{equation}
\end{corollary}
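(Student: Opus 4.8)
The plan is to deduce the corollary from Proposition \ref{lem:bilinear-sums} by converting the \emph{average} smallness recorded in $\lb{g}{I}$ into the \emph{uniform} (sup-norm) hypothesis \eqref{eq:11:90} required there. This conversion is the heart of the matter, and it is precisely where the factor $q^{\abs{I}}$ appearing in \eqref{eq:32:00} is paid.

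First I would pass to phases: using Lemma \ref{lem:lift-T-to-R} (and $r \leq \C$) write $f(n) = e(\varphi(n))$ with $\varphi \in \QA{q}{\C}(\RR)$, so that $g(n) = e(\psi(n))$ for $\psi(n) = \varphi(pn) - \varphi(p'n) \in \QA{q}{}(\RR)$ with $\psi(0) = 0$. Since $\lb{h}{J} \leq 1$ by definition, I may assume the right-hand side of \eqref{eq:32:00} is $< 1$, which forces $\lb{g}{I} < 1$ and hence $\la{g}{I} = \lb{g}{I}$; then
\[
\abs{\EEE_{\supp(n) \subset I} g(n)} = \exp\bra{-\lb{g}{I}} \geq 1 - \lb{g}{I}.
\]

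The key step is to apply Lemma \ref{lem:concentration} to the family $(\psi(n))$ indexed by the $N = q^{\abs{I}}$ integers $n$ with $\supp(n) \subset I$, taking $\e = \lb{g}{I}$. Its ``in particular'' clause yields $\max_{n,m} \fpa{\psi(n) - \psi(m)} \leq \sqrt{N \e}$; setting $m$ to the index of $0$ and recalling $\psi(0) = 0$ gives $\fpa{\psi(n)} \leq \e_1$ for \emph{all} $n$ with $\supp(n) \subset I$, where $\e_1 := \sqrt{q^{\abs{I}} \lb{g}{I}}$. This is exactly hypothesis \eqref{eq:11:90} with $\beta = 0$ and $\e = \e_1$, so Proposition \ref{lem:bilinear-sums} (whose interval $J = [\min I + \kk, \max I - \kk]$ coincides with ours) produces $\alpha = \ffrac{a}{b} \in \QQ$ with $b$ divisible by $p - p'$ and coprime to $q$ satisfying $\fpa{\varphi(n) - \alpha n} \ll_\C s_q(n) \e_1$ for all $n$ with $\supp(n) \subset J$. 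Finally I would bound $\lb{h}{J}$ for $h(n) = e(\varphi(n) - \alpha n)$, which lies in $\QM{q}{r}(\UU)$ since $n \mapsto \alpha n$ is $q$-additive. By Lemma \ref{lem:lambda-alternative} with the choice $\beta = 0$,
\[
\lb{h}{J} \ll \EEE_{\supp(n) \subset J} \fpa{\varphi(n) - \alpha n}^2 \ll_\C \e_1^2 \EEE_{\supp(n) \subset J} s_q(n)^2 \ll \e_1^2 \abs{I}^2 = \abs{I}^2 q^{\abs{I}} \lb{g}{I},
\]
using $s_q(n) \leq (q-1)\abs{J} \ll \abs{I}$ for $\supp(n) \subset J$ and $q$ fixed. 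This is \eqref{eq:32:00}.

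I expect the only genuine content to be the average-to-supremum passage in the key step. The loss $q^{\abs{I}}$ is intrinsic to this method: $\lb{g}{I}$ controls only a mean square over $q^{\abs{I}}$ points, while Proposition \ref{lem:bilinear-sums} demands control of the single worst value $\fpa{\psi(n)}$, which can exceed the root-mean-square by a factor up to $\sqrt{q^{\abs{I}}}$ — exactly the bound that Lemma \ref{lem:concentration} supplies. The remaining points (that the returned $\alpha$ may be taken with $\beta = 0$, and that the vacuous regime $\e_1 > \tfrac12$ causes no trouble) are handled automatically: in that regime the target bound already exceeds $1 \geq \lb{h}{J}$, so it holds trivially.
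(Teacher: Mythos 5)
Your proof is correct and follows essentially the same route as the paper: convert the mean-square bound $\lb{g}{I}$ into a uniform bound $\fpa{\psi(n)-\beta}\leq\sqrt{q^{\abs{I}}\lb{g}{I}}$ on all of $I$ (paying the factor $q^{\abs{I}}$), feed this into Proposition \ref{lem:bilinear-sums}, and then return to $\lb{h}{J}$ via Lemma \ref{lem:lambda-alternative}. The only cosmetic difference is that you route the average-to-supremum step through the ``in particular'' clause of Lemma \ref{lem:concentration}, whereas the paper gets the same bound directly from Lemma \ref{lem:lambda-alternative} and the trivial inequality $\EE \geq q^{-\abs{I}}\max$.
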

\begin{proof} Pick $\varphi \in \QA{q}{r}(\RR)$ and $\psi \in \QA{q}{}(\RR)$ so that $f(n) = e(\varphi(n))$ and $g(n) = e(\psi(n))$ for $n \geq 0$. Define $\e > 0$ by  
\[
\e = \max_{\supp(n) \subset I} \fpa{\psi(n) - \b},
\]
where $\b \in \RR$ is chosen to minimise the value of $\e$. Then, by Lemma \ref{lem:lambda-alternative},
	\[	\lb{g}{I} \gg \inf_{\gamma \in \RR} \EEE_{\supp(n) \subset I}  \fpa{\psi(n) - \gamma}^2 \geq q^{-\abs{I}} \e^2.
	\]
	Let $\a$ be the value produced by an application of Proposition \ref{lem:bilinear-sums} and let $h$ be defined as above. Then it follows by another application of Lemma \ref{lem:lambda-alternative} that
\[
	\lb{h}{J} \ll \inf_{\gamma \in \RR} \EEE_{\supp(n) \subset I}  \fpa{\varphi(n) - \alpha n - \gamma}^2 \ll_{\C} \abs{I}^2 \e^2 \ll  \abs{I}^2 q^{\abs{I}} \lb{g}{I}. \qedhere
\]
\end{proof}

\color{black} 

\makeatletter{}\section{Proof of the main theorem}

We now have all the ingredients needed to finish our argument.

\begin{proof}[Proof of Theorem \ref{thm:B}]
	Suppose that the sequence $f \in \SMD{q}{r}$ does not satisfy the Katai--Bourgain--Sarnak--Ziegler criterion for some distinct primes $p,p'$ not dividing $q$. Hence, Ces\`{a}ro averages of the sequence $g \in \QMD{q}{}$ given by $g(n) = f(pn) \bar f(p'n)$ for $n \geq 0$ do not converge to $0$. By Proposition \ref{prop:convergence-criterion}, for any collection of disjoint intervals $(I_i)_{i=1}^\infty$ we have
\begin{equation}\label{eq:70:00}
	\sum_{i=1}^\infty \lb{g}{I_i} < \infty.
\end{equation}
In particular, for any $l \geq 0$ it holds that  
\begin{equation}\label{eq:70:01}
\max_{\supp(n) \subset [k,k+l)} \abs{ g(n) - 1} \to 0 \text{ as } k \to \infty.
\end{equation}
It follows from Proposition \ref{lem:bilinear-sums} that there exist $\alpha_{l,k} \in \QQ$ ($k,l \geq 0$) with denominators dividing $p-p'$ and coprime to $q$ such that for each $l \geq 0$,
\begin{equation}\label{eq:70:02}
\max_{\supp(n) \subset [k,k+l)} \abs{ f(n) - e(n \a_{l,k})} \to 0 \text{ as } k \to \infty.
\end{equation}
Since $\alpha_{l,k}$ take only finitely many values (no more than $\abs{p-p'}$) it follows from \eqref{eq:70:02} that for each $l \geq 2$ there is some $\a_{l}$ such that $\a_{l,k} = \a_l$ for $k$ large enough (in terms of $l$). Similarly, there exists $\a$ such that $\a_l = \a$ for all $l \geq 0$. (Recall Remark \ref{rmrk:a-is-unique}.) Replacing $f$ with $f'$ given by $f'(n) = f(n) e(-n\alpha)$, we may assume without loss of generality that $\alpha = 0$. (Recall that the set of almost periodic sequences is closed under products by Lemma \ref{lem:almost-periodic-product}.)

Let $\kk = \kk\bra{\max\bra{r,p,p'}}$ be the constant from Proposition \ref{lem:bilinear-sums}. Consider any sequence of disjoint intervals $(J_i)_{i=1}^\infty$ of length $\leq 2r$, and let $I_i = [\min J_i - \kk, \max J_i + \kk]$ if $\min J_i \geq \kk$ and $I_i = \emptyset$ otherwise. By Corollary \ref{cor:structured}, for sufficiently large $i$ we have 
\[ 
\lb{f}{J_i} \ll_{r,p,p'} \lb{g}{I_i}, 
\]
Since the intervals $(I_i)_{i=1}^\infty$ can be partitioned into $O_{r,p,p'}(1)$ sequences of disjoint collections, we conclude from \eqref{eq:70:00} that 
\begin{equation}\label{eq:70:05}
	\sum_{i=1}^\infty \lb{f}{J_i} \ll_{r,p,p'} \sum_{i=1}^\infty \lb{g}{I_i} + 1 < \infty.
\end{equation}
Consider now any sequence of disjoint intervals $(K_i)_{i=1}^\infty$. By Propositon \ref{prop:global<local}, there exist a sequence $(J_i)_{i=1}^\infty$ of disjoint intervals of length $\leq 2r$ such that 
\begin{equation}\label{eq:70:06}
	\sum_{i=1}^\infty \lb{f}{K_i} \ll \sum_{i=1}^\infty \lb{f}{J_i} < \infty.
\end{equation}
Thus, by Proposition \ref{prop:convergence-criterion}, $f$ is almost periodic (in fact, $q$-almost periodic).
\end{proof}

\bibliographystyle{alphaabbr}
\bibliography{bibliography}

\begin{thebibliography}{eAKPLdlR17}

\bibitem[AS03]{AlloucheShallit-book}
J.-P. Allouche and J.~Shallit.
\newblock {\em Automatic sequences}.
\newblock Cambridge University Press, Cambridge, 2003.
\newblock Theory, applications, generalizations.

\bibitem[Bou13a]{Bourgain-2013}
J.~Bourgain.
\newblock M\"obius-{W}alsh correlation bounds and an estimate of {M}auduit and
  {R}ivat.
\newblock {\em J. Anal. Math.}, 119:147--163, 2013.

\bibitem[Bou13b]{Bourgain-2013b}
J.~Bourgain.
\newblock On the correlation of the {M}oebius function with rank-one systems.
\newblock {\em J. Anal. Math.}, 120:105--130, 2013.

\bibitem[BSZ13]{BourgainSarnakZiegler-2013}
J.~Bourgain, P.~Sarnak, and T.~Ziegler.
\newblock Disjointness of {M}oebius from horocycle flows.
\newblock In {\em From {F}ourier analysis and number theory to {R}adon
  transforms and geometry}, volume~28 of {\em Dev. Math.}, pages 67--83.
  Springer, New York, 2013.

\bibitem[Dav37]{Davenport-1937}
H.~Davenport.
\newblock On some infinite series involving arithmetical functions (ii).
\newblock {\em The Quarterly Journal of Mathematics}, os-8(1):313--320, 1937.

\bibitem[Del72]{Delange-1972}
H.~Delange.
\newblock Sur les fonctions {$q$}-additives ou {$q$} -multiplicatives.
\newblock {\em Acta Arith.}, 21:285--298. (errata insert), 1972.

\bibitem[DK15]{DownarowiczKasjan-2015}
T.~Downarowicz and S.~Kasjan.
\newblock Odometers and {T}oeplitz systems revisited in the context of
  {S}arnak's conjecture.
\newblock {\em Studia Math.}, 229(1):45--72, 2015.

\bibitem[DM12]{DrmotaMorgenbesser-2012}
M.~Drmota and J.~F. Morgenbesser.
\newblock Generalized {T}hue-{M}orse sequences of squares.
\newblock {\em Israel J. Math.}, 190:157--193, 2012.

\bibitem[DMS17]{DrmotaMullnerSpiegelhofer-2017}
M.~Drmota, C.~M{\"u}llner, and L.~Spiegelhofer.
\newblock M\"{o}bius orthogonality for the {Z}eckendorf sum-of-digits function.
\newblock {\em arXiv preprint arXiv:1706.09680}, 2017.

\bibitem[DT05]{DartygeTenenbaum-2005}
C.~Dartyge and G.~Tenenbaum.
\newblock Sommes des chiffres de multiples d'entiers.
\newblock {\em Ann. Inst. Fourier (Grenoble)}, 55(7):2423--2474, 2005.

\bibitem[eAKL16]{ElAbdalaouiKasjanLemanczyk-2016}
E.~H. el~Abdalaoui, S.~Kasjan, and M.~Lema\'nczyk.
\newblock 0-1 sequences of the {T}hue-{M}orse type and {S}arnak's conjecture.
\newblock {\em Proc. Amer. Math. Soc.}, 144(1):161--176, 2016.

\bibitem[eAKPLdlR17]{ElAbdalaouiKulagaPrzymusLemanczykDeLaRue-2017}
E.~H. el~Abdalaoui, J.~Ku\l{}aga-Przymus, M.~Lema\'nczyk, and T.~de~la Rue.
\newblock The {C}howla and the {S}arnak conjectures from ergodic theory point
  of view.
\newblock {\em Discrete Contin. Dyn. Syst.}, 37(6):2899--2944, 2017.

\bibitem[eALdlR14]{ElAbdalaouiLemanczykDeLaRue-2014}
E.~H. el~Abdalaoui, M.~Lema\'nczyk, and T.~de~la Rue.
\newblock On spectral disjointness of powers for rank-one transformations and
  {M}\"obius orthogonality.
\newblock {\em J. Funct. Anal.}, 266(1):284--317, 2014.

\bibitem[eALdlR17]{ElAbdalaouiLemanczykDeLaRue-2015}
E.~H. el~Abdalaoui, M.~Lema\'nczyk, and T.~de~la Rue.
\newblock Automorphisms with quasi-discrete spectrum, multiplicative functions
  and average orthogonality along short intervals.
\newblock {\em Int. Math. Res. Not. IMRN}, (14):4350--4368, 2017.

\bibitem[Fan17]{Fan-2017}
A.~Fan.
\newblock Fully oscillating sequences and weighted multiple ergodic limit.
\newblock {\em C. R. Math. Acad. Sci. Paris}, 355(8):866--870, 2017.

\bibitem[FKPL17]{FerencziKulagaPrzymusLemanczyk-2017}
S.~Ferenczi, J.~Ku\l{}aga-Przymus, and M.~Lemanczyk.
\newblock Sarnak's conjecture--what's new.
\newblock {\em arXiv preprint arXiv:1710.04039}, 2017.

\bibitem[FKPLM16]{FerencziKulagaPrzymusLemanczyk-2016}
S.~Ferenczi, J.~Ku\l{}aga-Przymus, M.~Lema\'nczyk, and C.~Mauduit.
\newblock Substitutions and {M}\"obius disjointness.
\newblock In {\em Ergodic theory, dynamical systems, and the continuing
  influence of {J}ohn {C}. {O}xtoby}, volume 678 of {\em Contemp. Math.}, pages
  151--173. Amer. Math. Soc., Providence, RI, 2016.

\bibitem[FM18]{FerencziMauduit-2018}
S.~Ferenczi and C.~Mauduit.
\newblock On {S}arnak's conjecture and {V}eech's question for interval
  exchanges.
\newblock {\em J. Anal. Math.}, 134(2):545--573, 2018.

\bibitem[Gel68]{Gelfond-1968}
A.~O. Gel'fond.
\newblock Sur les nombres qui ont des propri\'et\'es additives et
  multiplicatives donn\'ees.
\newblock {\em Acta Arith.}, 13:259--265, 1967/1968.

\bibitem[Gre12]{Green-2012}
B.~Green.
\newblock On (not) computing the {M}\"obius function using bounded depth
  circuits.
\newblock {\em Combin. Probab. Comput.}, 21(6):942--951, 2012.

\bibitem[GT12]{GreenTao-2012}
B.~Green and T.~Tao.
\newblock The {M}\"obius function is strongly orthogonal to nilsequences.
\newblock {\em Ann. of Math. (2)}, 175(2):541--566, 2012.

\bibitem[Han16]{Hanna-2016}
G.~Hanna.
\newblock {Blocs de chiffres de taille croissante dans les nombres premiers}.
\newblock 2016.
\newblock Preprint. {\href{https://arxiv.org/abs/1611.10279}{arXiv:1611.10279
  [math.NT]}}.

\bibitem[IK01]{IndlekoferKatai-2001-AMH}
K.-H. Indlekofer and I.~K\'atai.
\newblock Investigations in the theory of {$q$}-additive and
  {$q$}-multiplicative functions. {I}.
\newblock {\em Acta Math. Hungar.}, 91(1-2):53--78, 2001.

\bibitem[IK02]{IndlekoferKatai-2002}
K.-H. Indlekofer and I.~K{\'a}tai.
\newblock Investigations in the theory of {$q$}-additive and
  {$q$}-multiplicative functions. {II}.
\newblock {\em Acta Math. Hungar.}, 97(1-2):97--108, 2002.

\bibitem[Kar17]{Karagulyan-2017}
D.~Karagulyan.
\newblock On {M}\"obius orthogonality for subshifts of finite type with
  positive topological entropy.
\newblock {\em Studia Math.}, 237(3):277--282, 2017.

\bibitem[K{\'a}t86]{Katai-1986}
I.~K{\'a}tai.
\newblock A remark on a theorem of {H}. {D}aboussi.
\newblock {\em Acta Math. Hungar.}, 47(1-2):223--225, 1986.

\bibitem[K{\'a}t02]{Katai-2002}
I.~K{\'a}tai.
\newblock On {$q$}-additive and {$q$}-multiplicative functions.
\newblock In {\em Number theory and discrete mathematics ({C}handigarh, 2000)},
  Trends Math., pages 61--76. Birkh\"auser, Basel, 2002.

\bibitem[KPL15]{KulagaPrzymusLemanczyk-2015}
J.~Ku\l{}aga-Przymus and M.~Lema\'nczyk.
\newblock The {M}\"obius function and continuous extensions of rotations.
\newblock {\em Monatsh. Math.}, 178(4):553--582, 2015.

\bibitem[KW17]{KropfWagner-2017}
S.~Kropf and S.~Wagner.
\newblock On {$q$}-quasiadditive and {$q$}-quasimultiplicative functions.
\newblock {\em Electron. J. Combin.}, 24(1):Paper 1.60, 22, 2017.

\bibitem[LS15]{LiuSarnak-2015}
J.~Liu and P.~Sarnak.
\newblock The {M}\"obius function and distal flows.
\newblock {\em Duke Math. J.}, 164(7):1353--1399, 2015.

\bibitem[MMR14]{MartinMauduitRivat-2014}
B.~Martin, C.~Mauduit, and J.~Rivat.
\newblock Th\'eor\'eme des nombres premiers pour les fonctions digitales.
\newblock {\em Acta Arith.}, 165(1):11--45, 2014.

\bibitem[MR10]{MauduitRivat-2010}
C.~Mauduit and J.~Rivat.
\newblock Sur un probl\`eme de {G}elfond: la somme des chiffres des nombres
  premiers.
\newblock {\em Ann. of Math. (2)}, 171(3):1591--1646, 2010.

\bibitem[MR15]{MauduitRivat-2015}
C.~Mauduit and J.~Rivat.
\newblock Prime numbers along {R}udin-{S}hapiro sequences.
\newblock {\em J. Eur. Math. Soc. (JEMS)}, 17(10):2595--2642, 2015.

\bibitem[MS17]{MullnerSpiegelhofer-2017}
C.~M{\"u}llner and L.~Spiegelhofer.
\newblock Normality of the {T}hue-{M}orse sequence along {P}iatetski-{S}hapiro
  sequences, {II}.
\newblock {\em Israel J. Math.}, 220(2):691--738, 2017.

\bibitem[M{\"u}l17]{Mullner-2017}
C.~M{\"u}llner.
\newblock Automatic sequences fulfill the {S}arnak conjecture.
\newblock {\em Duke Math. J.}, 166(17):3219--3290, 2017.

\bibitem[Pec15]{Peckner-2015}
R.~Peckner.
\newblock {\em Two dynamical perspectives on the randomness of the {M}obius
  function}.
\newblock ProQuest LLC, Ann Arbor, MI, 2015.
\newblock Thesis (Ph.D.)--Princeton University.

\bibitem[Sar09]{Sarnak-2009}
P.~Sarnak.
\newblock Three lectures on the möbius function, randomness and dynamics.,
  2009.

\bibitem[Sar12]{Sarnak-2012}
P.~Sarnak.
\newblock Mobius randomness and dynamics.
\newblock {\em Not. S. Afr. Math. Soc.}, 43(2):89--97, 2012.

\bibitem[SU15]{SarnakUpis-2015}
P.~Sarnak and A.~Ubis.
\newblock The horocycle flow at prime times.
\newblock {\em J. Math. Pures Appl. (9)}, 103(2):575--618, 2015.

\bibitem[Ten15]{Tenenbaum-book}
G.~Tenenbaum.
\newblock {\em Introduction to analytic and probabilistic number theory},
  volume 163 of {\em Graduate Studies in Mathematics}.
\newblock American Mathematical Society, Providence, RI, third edition, 2015.
\newblock Translated from the 2008 French edition by Patrick D. F. Ion.

\bibitem[Vee17]{Veech-2017}
W.~A. Veech.
\newblock M\"obius orthogonality for generalized {M}orse-{K}akutani flows.
\newblock {\em Amer. J. Math.}, 139(5):1157--1203, 2017.

\end{thebibliography}

\end{document}